\DeclareFontFamily{U} {cmr}{}
\DeclareFontShape{U}{cmr}{m}{n}{
	<-6> cmr5
	<6-7> cmr6
	<7-8> cmr7
	<8-9> cmr8
	<9-10> cmr9
	<10-12> cmr10
	<12-> cmr12}{}
\DeclareSymbolFont{Xcmr} {U} {cmr}{m}{n}
\definecolor{lightseagreen}{rgb}{0.13, 0.7, 0.67}
\definecolor{secondcolor}{HTML}{5BFF00}
\newcommand{\secondopacity}{.3}
\definecolor{bordeaux}{RGB}{100,0,50}
\definecolor{darkblue}{RGB}{25, 25, 112}
\newcommand{\say}[1]{``#1''} 
\declaretheorem[name = Theorem, numberwithin = section, style = plain]{theorem}
\declaretheorem[name = Conjecture, numberlike = theorem, style = plain]{conjecture}
\declaretheorem[name = Lemma, numberlike = theorem, style = plain]{lemma}
\declaretheorem[name = Observation, numberlike = theorem, style = plain]{observation}
\declaretheorem[name = Problem, numberlike = theorem, style = plain]{problem}
\declaretheorem[name = Remark, numberlike = theorem, style = definition]{remark}
\crefname{observation}{Observation}{Observations}
\crefname{conjecture}{Conjecture}{Conjectures}
\crefname{claim}{Claim}{Claims}
\crefname{problem}{Problem}{Problems}
\DeclareFontFamily{U}{matha}{\hyphenchar\font45}
\DeclareFontShape{U}{matha}{m}{n}{
	<5> <6> <7> <8> <9> <10> gen * matha
	<10.95> matha10 <12> <14.4> <17.28> <20.74> <24.88> matha12
}{}
\DeclareSymbolFont{matha}{U}{matha}{m}{n}
\DeclareMathSymbol{\specialuparrow}{\mathrel}{matha}{"D2}
\DeclareMathSymbol{\specialrightarrow}{\mathrel}{matha}{"D1}
\renewcommand*{\backref}[1]{}
\renewcommand*{\backrefalt}[4]{
	\ifcase #1 Not cited.%
	\or $\specialuparrow$#2%
	\else $\specialuparrow$#2%
	\fi%
}
\renewcommand{\epsilon}{\varepsilon}
\renewcommand{\ge}{\geqslant}
\renewcommand{\le}{\leqslant}
\renewcommand{\leq}{\leqslant}
\renewcommand{\emptyset}{\varnothing}
\DeclarePairedDelimiter{\abs}{\lvert}{\rvert}
\DeclareMathOperator{\stablesetpolytope}{SSP}
\newcommand{\ssp}[1]{\stablesetpolytope(#1)}
\DeclareMathOperator{\tperfectpolytope}{TSTAB}
\newcommand{\tstab}[1]{\tperfectpolytope(#1)}
\DeclareMathOperator{\hperfectpolytope}{HSTAB}
\newcommand{\hstab}[1]{\hperfectpolytope(#1)}
\DeclareMathOperator{\qperfectpolytope}{QSTAB}
\newcommand{\qstab}[1]{\qperfectpolytope(#1)}
\newcommand{\vasek}{Chv\'{a}tal }
\title{Colouring t-perfect graphs}
\date{}
\begin{document}
	
	\author{Maria Chudnovsky\thanks {Supported by NSF Grant DMS-2348219 and by AFOSR grant FA9550-22-1-0083.}}
	\affil{\small Department of Mathematics, Princeton University, Princeton, USA}
	\author{Linda Cook\thanks{Supported by the Gravitation programme NETWORKS (NWO grant no. 024.002.003) of the Dutch Ministry of Education, Culture and Science (OCW) and a Marie Skłodowska-Curie Action of the European Commission (COFUND grant no. 945045) }}
	\affil{\small Korteweg-de Vries Institute for Mathematics, University of Amsterdam, The~Netherlands}
	\author{James Davies}
	\affil{\small Trinity Hall, University of Cambridge, United Kingdom}
	\author{Sang-il Oum\thanks{Supported by the Institute for Basic Science (IBS-R029-C1).}}
	\affil{\small Discrete Mathematics Group, Institute for Basic Science (IBS), Daejeon, South Korea}
	\author{Jane Tan}
	\affil{\small All Souls College, University of Oxford, United Kingdom}
	\affil[ ]{Email: \textsf{\href{mailto:: mchudnov@math.princeton.edu}{mchudnov@math.princeton.edu}},
		\textsf{\href{l.j.cook@uva.nl}{l.j.cook@uva.nl}}
		\textsf{\href{jgd37@cam.ac.uk}{jgd37@cam.ac.uk}},
		\textsf{\href{mailto:sangil@ibs.re.kr}{sangil@ibs.re.kr}},
		\textsf{\href{mailto:jane.tan@maths.ox.ac.uk}{jane.tan@maths.ox.ac.uk}}}

	\maketitle
	
	\begin{abstract}
		Perfect graphs can be described as the graphs whose stable set polytopes are defined by their non-negativity and clique inequalities (including edge inequalities).
		In 1975, Chv\'{a}tal defined an analogous class of t-perfect graphs, which are the graphs whose stable set polytopes are defined by their non-negativity, edge inequalities, and odd circuit inequalities.
		We show that 
		t-perfect graphs are $199053$-colourable.
		This is the first finite bound on the chromatic number of t-perfect graphs and answers a question of Shepherd from 1995.
		Our proof also shows that every h-perfect graph with clique number $\omega$ is $(\omega + 199050)$-colourable.
	\end{abstract}

	\renewcommand{\thefootnote}{\fnsymbol{footnote}} 

	\renewcommand{\thefootnote}{\arabic{footnote}} 

	\section{Introduction}\label{sec:intro}
	
	Let $G=(V,E)$ be a graph. A \emph{stable} set of $G$ is a set of pairwise non-adjacent vertices. 
	We write $\chi(G)$ for the \emph{chromatic number} of $G$, that is, the minimum number of colours needed to colour the vertices of $G$ so that no two adjacent vertices have the same colour. Equivalently, $\chi(G)$ is the minimum number of stable sets needed to cover the vertex set of $G$.
	A \emph{clique} is a set of vertices that are pairwise adjacent.
	We write $\omega(G)$ for the size of the largest clique in a graph $G$, called the \emph{clique number} of $G$.
	A graph is \emph{triangle-free} if it does not have a clique of size three.
	
	The stable set problem is the problem of finding the maximum size of a stable set in a graph. Lov\'asz~\cite{Lovasz1994} described it as one of the simplest and most fundamental problems concerning graphs. 
	Since the stable set problem is NP-hard in general, it is natural to restrict to input graphs on which the stable set problem can be solved efficiently.
	
	In particular, one approach is to consider the polytope generated by stable sets and use the techniques from linear programming.
	For a subset $S$ of $V$, we write $\chi^S\in \mathbb{R}^V$ to denote its \emph{incidence vector}, that is a $0$-$1$ vector such that $\chi^S(v)=1$ if $v\in S$ and $\chi^S(v)=0$ otherwise.
	The \emph{stable set polytope} of a graph $G=(V,E)$ is defined as the convex hull of the incidence vectors of the stable sets of $G$. We denote it by $\ssp{G}$.
	Since the stable set polytope is a convex hull of a set of points, it can be described by some set of linear inequalities.
	If we can efficiently identify, for a given point $x^*$ outside the polytope, a linear inequality certifying that $x^*$ is outside the polytope, then by using the ellipsoid method, one can solve the maximum weight stable set problem in polynomial time~\cite{GLS1988}. 
	This problem of identifying such a linear inequality for a polytope is called the \emph{separation problem}.
	So if the separation problem for the stable set polytope of a graph can be solved efficiently, then the stable set problem can be solved efficiently for the graph.

	Here are some easy inequalities that are satisfied by points $x\in \mathbb{R}^{V(G)}$ in every stable set polytope of a graph $G$: 
	\begin{enumerate}[label=(\alph*)]
		\item\label{eq:nonnegative} \emph{(Nonnegativity)} $x_v\ge 0$ for every vertex $v$ of~$G$.
		\item\label{eq:edge} \emph{(Edge inequality)} $x_u+x_v\le 1$ for every edge $uv$ of $G$.
		\item \label{eq:clique} \emph{(Clique inequality)} $\sum_{v\in K} x_v\le 1$ for every clique $K$ of $G$.
		\item\label{eq:oddcycle}\emph{(Odd cycle inequality)} $\sum_{v\in V(C)} x_v\le \frac{\abs{V(C)}-1}{2}$ for every odd cycle $C$ of $G$.
	\end{enumerate}

	Let $\qstab{G}$ be the set of all vectors $x\in \mathbb R^{V(G)}$ satisfying \ref{eq:nonnegative} and \ref{eq:clique}.
	Remarkably, important theorems by Lov\'asz~\cite{lovasz1972normal} and Fulkerson~\cite{perfectgraphpolytope-fulkerson} on perfect graphs, as stated in \vasek~\cite{chvatal-introduces-t-perfect-1975}, imply that 
	\[ \ssp{G}=\qstab{G}
	\text{ if and only if $G$ is perfect.}\]
	Perfect graphs were introduced in the 1960s by Berge in terms of chromatic numbers and clique numbers.
	A graph $H$ is an \emph{induced} subgraph of a graph $G$ if $H$ can be obtained from $G$ by deleting vertices and all incident edges adjacent to them. 
	A graph~$G$ is called \emph{perfect} if $\chi(H)=\omega(H)$ for every induced subgraph~$H$ of $G$.
	In what has since become known as the Strong Perfect Graph Theorem, Chudnovsky, Robertson, Seymour, and Thomas~\cite{CRST2006} proved a theorem characterising the list of forbidden induced subgraphs for the class of perfect graphs. 
	
	Motivated by perfect graphs, \vasek~\cite{chvatal-introduces-t-perfect-1975} initiated the study of t-perfect graphs.
	Let $\tstab{G}$ be the set of all vectors $x\in \mathbb R^{V(G)}$ satisfying \ref{eq:nonnegative}, \ref{eq:edge}, and \ref{eq:oddcycle}.
	We define that a graph is \emph{t-perfect}\footnote{Here `t’ stands for `trou’, the French word for `hole'.}
	if $\ssp{G}=\tstab{G}$.

	\begin{figure}
		\begin{center}
			\begin{tikzpicture}[scale=1, every node/.style={circle, draw, fill=black, inner sep=1pt, minimum size=2pt}]
				\foreach \i in {0,1,2}
				{
					\node (v\i) at (0,\i) {};
					\node (u\i) at (2,\i) {};
					\node (w\i) at (3,\i+1){};
					\draw (v\i)--(u\i)--(w\i)--(v\i);
				}
				\draw (v1)--(u2)--(v0);
				\draw (v1)--(u0)--(v2);
				\draw (v0)--(u1)--(v2);
				\draw (w0)--(w1)--(w2);
				\draw (w0) [bend right] to (w2);
			\end{tikzpicture}\quad
			\begin{tikzpicture}[scale=1, every node/.style={circle, draw, fill=black, inner sep=1pt, minimum size=2pt}]
				\foreach \i in {0,1,2,3,4}
				{
					\node (v\i) at (90+72*\i:.6) {};
					\node (w\i) at (90+72*\i:1.5) {};
					\draw (v\i)--(w\i);
				}
				\draw (v0)--(v2)--(v4)--(v1)--(v3)--(v0);
				\draw (v0)--(w1)--(v2)--(w3)--(v4)--(w0)--(v1)--(w2)--(v3)--(w4)--(v0);
			\end{tikzpicture}
		\end{center}
		\caption{The only known $4$-critical t-perfect graphs in the literature. 
			On the left is the complement of the line graph of the complement of $C_6$ found by Laurent and Seymour~\cite[p. 1207]{schrijver2003combinatorial},
			and on the right is the complement of the line graph of the $5$-wheel found by Benchetrit~\cite{benchetrit-PhD,benchetrit20164critical}.}
		\label{fig:laurent-seymour}
	\end{figure}
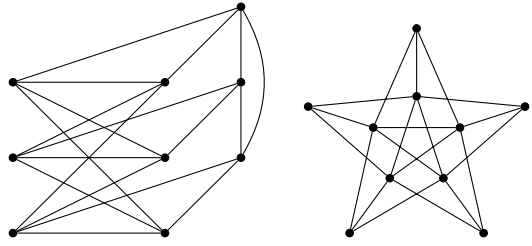
	Let us write $\mathbf{1}\in \mathbb{R}^{V(G)}$ for the vector with all entries equal to $1$.
	It is easy to verify that $\frac{1}{3}\mathbf{1}\in \tstab{G}$ for every graph $G$ and this implies that
	$K_4$ is not t-perfect and  
	the fractional chromatic number of any t-perfect graph is at most three.
	In 1992, Shepherd~\cite[8.14]{jensen-toft} asked whether for each t-perfect graph $G$, the polytope $\ssp{G}$ has the \emph{integer decomposition property}, that is, for every positive integer $k$, every integral vector in $k \ssp{G}$ can be written as a sum of $k$ vertices of $\ssp{G}$.
	If $G$ is t-perfect and $\ssp{G}$ has the integer decomposition property, then $\frac13\mathbf{1}\in \tstab{G}=\ssp{G}$ should be expressible as a sum of three incidence vectors of stable sets, implying that $G$ is $3$-colourable. 
	However, two counterexamples, depicted in~\cref{fig:laurent-seymour}, were found by Laurent and Seymour~\cite[p. 1207]{schrijver2003combinatorial} in 1994 
	and Benchetrit~\cite{benchetrit-PhD,benchetrit20164critical} in 2015, respectively, 
	answering Shepherd's question in the negative. On the other hand, Seb\H{o} conjectured that triangle-free t-perfect graphs are $3$-colourable~(see \cite{clawfreetperfect-maya}), and this is wide open.

	More generally, is every t-perfect graph $4$-colourable? This very natural question appears in the problem book of Jensen and Toft~\cite[8.14]{jensen-toft}, and is attributed to Shepherd from 1994. 
	Reiterating this, Shepherd wrote in the conclusion of his 1995 paper \cite{shepherd-lehman}:
	\begin{quote}
		\itshape
		For every $k\ge 4$, it is not known whether each t-perfect graph is $k$-colourable.
	\end{quote}

	Our main result is the first positive answer to this question.
	We remark that we have optimised the proof for simplicity rather than to optimise the bound.
	
	\begin{theorem}\label{thm:main}\mbox{}
		Every t-perfect graph is $199053$-colourable.
	\end{theorem}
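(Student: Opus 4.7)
The most natural first move is to prove the stronger h-perfect statement advertised in the abstract: every h-perfect graph $G$ satisfies $\chi(G)\le\omega(G)+C$ for an absolute constant $C$. Since $\frac{1}{3}\mathbf{1}\in\tstab{K_4}\setminus\ssp{K_4}$, t-perfect graphs are $K_4$-free, so $\omega\le 3$ and the theorem follows from the h-perfect bound with $C=199050$. I would set this up by taking a vertex-critical counterexample $G$ with $\chi(G)>\omega(G)+C$ and $|V(G)|$ minimum, so that every proper induced subgraph $H$ satisfies $\chi(H)\le\omega(H)+C$ and $\delta(G)\ge\omega(G)+C$.

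The plan from there is to find a bounded-size set $S\subseteq V(G)$ together with a palette $P$ of at most $C$ \emph{reserved} colours such that $G-S$ is still h-perfect (so that an inductive $(\omega(G)+C)$-colouring exists that is proper and uses only $\omega(G)$ colours outside $P$), and such that $S$ can be properly coloured from $P$ so as to extend the colouring of $G-S$. A contradiction then follows directly from criticality. The natural candidate for $S$ is an odd cycle transversal, so that $G-S$ is bipartite and thus trivially $\omega(G)$-colourable from a palette disjoint from $P$; the challenge is bounding $|S|$ by an absolute constant using nothing more than h-perfection.

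To produce such an $S$, I would aim for a structural dichotomy for h-perfect graphs: either (i) $G$ contains no short odd hole, in which case $G$ is bipartite or perfect (and thus $\omega(G)$-colourable by the Strong Perfect Graph Theorem), or (ii) $G$ contains a short odd hole $C$ whose second neighbourhood $N^{\le 2}[C]$ can be taken as the reservoir $S$. The odd-cycle inequality forces many restrictions on how vertices can attach to $C$ and to each other, and one would argue, using the Laurent--Seymour and Benchetrit configurations (\cref{fig:laurent-seymour}) as guides, that $G[N^{\le 2}[C]]$ has bounded chromatic number and that $G-N^{\le 2}[C]$ remains h-perfect. Iterating, or applying this around every short odd hole, then produces the required small odd cycle transversal.

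The main obstacle, unsurprisingly, is step (ii): controlling the local structure around short odd holes in an h-perfect graph well enough to simultaneously (a) preserve h-perfection after deletion and (b) bound the chromatic number of the deleted part by a function of $\omega$ alone. Odd holes in $G$ can share vertices and interact through long even paths, so a naive neighbourhood deletion will not preserve h-perfection. I expect the bulk of the work, and the source of the large constant $199050$, to come from an iterative Ramsey-type argument that collects short odd holes into bounded clusters and then removes their controlled neighbourhoods, with $C$ arising as the cumulative number of reserved colours needed per cluster times the maximum number of clusters that can interact.
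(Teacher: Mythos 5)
Your proposal runs in the opposite direction to the paper and, more importantly, rests on a structural dichotomy that is false. The paper proves \cref{thm:technical-main} (graphs of odd girth at least $11$ with no odd wheel t-minor are $199049$-colourable), derives \cref{thm:main} from it via \cref{color-reduction}, and only \emph{then} deduces the h-perfect bound \cref{thm:hperfect} from the triangle-free t-perfect case via \cref{lem:reduceomega}. You propose to prove the h-perfect bound first and read off \cref{thm:main} as a corollary; the corollary step is fine arithmetically ($\omega\le 3$ and $3+199050=199053$), but the h-perfect argument you sketch does not work.

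The central gap is in your alternative (i): \emph{absence of short odd holes does not imply bipartite or perfect}. The graph $C_{2k+1}$ for large $k$ has no short odd hole yet is neither bipartite nor perfect (an odd hole is the prototypical imperfect graph); the Strong Perfect Graph Theorem requires forbidding \emph{all} odd holes and odd antiholes, not just short ones. So after reducing to the ``no short odd hole'' regime you are left exactly with the hard case, and the paper's entire machinery (levellings, stable gradings, arithmetic ropes, and odd wheel t-minor extraction in \cref{sec:ropes,sec:oddwheel}) exists precisely to handle it. Relatedly, your alternative (ii) posits a bounded-size odd cycle transversal $S$ such that $G-S$ is bipartite. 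There is no reason a t-perfect (or h-perfect) graph should admit an odd cycle transversal of absolutely bounded size, and the paper never asserts or uses such a fact; \cref{lem:reduce} only produces a single stable set meeting the \emph{shortest} odd cycles, which raises the odd girth by $2$ but comes nowhere near bipartiteness. Finally, the claims that $G-N^{\le 2}[C]$ remains h-perfect and that $G[N^{\le 2}[C]]$ has bounded chromatic number are unsupported; the paper deliberately avoids any local structural analysis around individual odd holes because such control is not available.
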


	Let $\hstab{G}$ be the set of all vectors $x\in \mathbb R^{V(G)}$ satisfying \ref{eq:nonnegative}, \ref{eq:clique}, and \ref{eq:oddcycle}.
	A graph is \emph{h-perfect} if $\ssp{G}=\hstab{G}$.
	By their definitions, we have the relationships $\ssp{G}\subseteq \hstab{G}\subseteq \tstab{G}$ and $\hstab{G}\subseteq \qstab{G}$, and 
	t-perfect graphs are precisely h-perfect graphs without $K_4$ subgraphs. 
	Obviously, every perfect graph is h-perfect and every t-perfect graph is h-perfect.
	
	The study of h-perfect graphs was initiated by Sbihi and Uhry in 1984, who conjectured that every h-perfect graph $G$ with $\omega(G)\ge 3$ is $\omega(G)$-colourable~\cite[Conjecture 5.4]{SU1984}.
	This conjecture is false because of the graphs in \cref{fig:laurent-seymour}. 
	However, we prove that it is true up to an additive constant.  
	
	\begin{theorem}\label{thm:hperfect}
		Every h-perfect graph~$G$ is $(\omega(G)+ 199050)$-colourable.
	\end{theorem}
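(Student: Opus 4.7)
The plan is to induct on $\omega(G)$, using \cref{thm:main} as the base case and reducing the clique number by one at each step by removing a stable set that meets every maximum clique. The existence of such a stable set is a short polytopal consequence of h-perfection.

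For the base case, when $\omega(G) \le 3$ the graph $G$ contains no $K_4$ subgraph, so $G$ is t-perfect and \cref{thm:main} gives $\chi(G) \le 199053$, which is exactly $\omega(G) + 199050$ at $\omega(G) = 3$. The remaining small values $\omega(G) \le 2$ can be absorbed either by the trivial bound (for $\omega = 1$) or by noting that the proof of \cref{thm:main} in fact yields the slightly stronger $\chi(G) \le \omega(G) + 199050$ for t-perfect graphs.

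For the inductive step, let $\omega := \omega(G) \ge 4$. To produce the desired stable set, I first verify that $\tfrac{1}{\omega}\mathbf{1} \in \hstab{G}$: every clique $K$ satisfies $|K|/\omega \le 1$, and every odd cycle $C$ satisfies $|V(C)|/\omega \le (|V(C)|-1)/2$ whenever $\omega \ge 3$. By h-perfection, $\tfrac{1}{\omega}\mathbf{1} \in \ssp{G}$, so we may write $\tfrac{1}{\omega}\mathbf{1} = \sum_i \lambda_i \chi^{S_i}$ for stable sets $S_i$ with $\lambda_i > 0$ and $\sum_i \lambda_i = 1$. For any maximum clique $K$, the inequality $\sum_{v \in K} x_v \le 1$ is tight at $\tfrac{1}{\omega}\mathbf{1}$, giving $\sum_i \lambda_i |K \cap S_i| = 1$; since each $|K \cap S_i| \in \{0,1\}$ and the $\lambda_i$ sum to $1$, every $S_i$ in the support meets $K$ exactly once. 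Take $S$ to be any such $S_i$.

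Now $\omega(G - S) \le \omega - 1$, and $G - S$ is again h-perfect because the defining inequalities of $\hstab{G - S}$ are inherited from those of $\hstab{G}$ by setting the deleted coordinates to $0$ (the only delicate point is that an odd cycle of $G$ passing through a deleted vertex gives a path inequality in $G - S$, which follows from summing alternate edge inequalities along the surviving even path). By induction $\chi(G - S) \le \omega - 1 + 199050$, and one extra colour for $S$ gives $\chi(G) \le \omega + 199050$. The main conceptual input is \cref{thm:main} itself; given it, the inductive reduction is standard, with the only technical checks being closure of h-perfection under vertex deletion and the cosmetic boundary at $\omega \le 2$.
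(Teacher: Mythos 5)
Your proof is correct and takes essentially the same approach as the paper's: reduce $\omega(G)$ one step at a time by deleting a stable set that meets every maximum clique (which you extract directly from the convex decomposition of $\tfrac{1}{\omega}\mathbf{1}\in\hstab{G}$, while the paper's \cref{lem:reduceomega} derives it via the fractional chromatic number---an equivalent formulation), and then apply the colouring bound for t-perfect graphs. The only cosmetic difference is that you stop the reduction at the $K_4$-free level and invoke \cref{thm:main}, whereas the paper reduces one step further to triangle-free graphs and invokes \cref{thm:trianglefree}, which is also the cleanest way to settle the $\omega=2$ boundary case you flag.
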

	
	We remark that \cref{thm:hperfect} is the first known result bounding the chromatic number of $h$-perfect graphs in terms of the clique number, establishing that the class of h-perfect graphs is a \say{$\chi$-bounded} class.
	For further background on t-perfect graphs, we refer to the chapter on t-perfect graphs in the book of Schrijver~\cite[Chapter 68]{schrijver2003combinatorial}.

	\subsection{Prior Work}
	
	Several subclasses of t-perfect graphs are known to be $3$-colourable.
	In 1982, Fonlupt and Uhry~\cite{fonlupt-uhry-1982} showed that 
	if a graph $G$ has a vertex whose deletion results in a bipartite graph, then it is t-perfect. Obviously, such a graph is $3$-colourable.
	
	The \emph{claw} is the bipartite graph $K_{1,3}$, $P_5$ is the $5$-vertex graph, and the \emph{fork} is the five vertex graph obtained from $K_{1,3}$ by subdividing one edge.
	We say that a graph $G$ is \emph{$H$-free} if $G$ has no induced subgraph isomorphic to~$H$.
	Bruhn and Stein~\cite{clawfreetperfect-maya}, Bruhn and Fuchs~\cite{P5free-BruhnFuchs}, and Cao and Wang~\cite{cao2022forkfree} showed respectively that the classes of claw-free t-perfect graphs, $P_5$-free t-perfect graphs, and fork-free t-perfect graphs are all $3$-colourable.
	
	Additionally, classes of graphs forbidding certain subdivisions of $K_4$ have been proven to be t-perfect and $3$-colourable.
	Series-parallel graphs are graphs that do not contain a $K_4$-subdivision as a subgraph and they are well known to be $3$-colourable because they always have a vertex of degree at most two.
	Series-parallel graphs are t-perfect, as conjectured by Chvat\'al~\cite{chvatal-introduces-t-perfect-1975} and proved by Boulala and Uhry~\cite{Boulala-Uhry-1979-series-parallele}.
	
	Subsequently, much stronger versions of this statement were also established.
	We call a graph an \emph{odd $K_4$} if it is a subdivision of $K_4$ such that every triangle of the $K_4$ maps to an odd cycle.
	In 1986, Gerards and Schrijver~\cite{GerardsSchrijver-NoOddK4istperfect-1986} showed that graphs containing no odd $K_4$ as a subgraph are t-perfect.
	Catlin~\cite{Catlin1979} showed that graphs having no odd $K_4$ as a subgraph are $3$-colourable.

	For further strengthening, we say that a \emph{bad $K_4$} is a subdivision of $K_4$ that is not t-perfect.\footnote{Every bad $K_4$ is an odd $K_4$, but the converse is false. Barahona and Mahjoub prove a characterisation of bad $K_4$'s \cite{BM1994b, schrijver2003combinatorial}.
	}
	In 1998, Gerards and Shepherd~\cite{GS1998} showed that graphs having no bad $K_4$ as a subgraph are t-perfect. In addition, they showed that graphs containing no bad $K_4$ as a subgraph are $3$-colourable\footnote{In \cite{GS1998}, Gerards and Shepherd stated this but omitted the proof. However, it can be easily deduced from their structural description.},
	extending the result of Catlin~\cite{Catlin1979}.

	\paragraph{Connection with $\chi$-boundedness}
	Motivated by the theory of perfect graphs, Gy\'arf\'as~\cite{Gyarfas1987} introduced the concept of $\chi$-boundedness. 
	A class $\mathcal{C}$ of graphs is said to be \emph{$\chi$-bounded} if there exists a function $f$ such that every induced subgraph $H$ of a graph $G \in \mathcal{C}$ satisfies $\chi(H) \leq f(\omega(H))$. 
	Such a function~$f$ is called a \emph{$\chi$-bounding function} for $\mathcal{C}$.
	If a class of graphs has a polynomial $\chi$-bounding function, then it is called \emph{polynomially $\chi$-bounded}.
	If it has a linear $\chi$-bounding function, then it is called \emph{linearly $\chi$-bounded}.
	Trivially, the class of perfect graphs is linearly $\chi$-bounded.
	In general, hereditary $\chi$-bounded classes of graphs can have arbitrarily fast-growing $\chi$-bounding functions \cite{brianski2024separating}.
	Classical constructions of Zykov, Mycielski, and Tutte show that the class of all graphs is not $\chi$-bounded \cite{descartes1947three, descartes1954solution, Mycielski1955, zykov1949}.
	See a survey by Scott and Seymour~\cite{SS2018} for more on $\chi$-boundedness.
	
	\cref{thm:hperfect} implies that the class of h-perfect graphs is linearly $\chi$-bounded and is the first result showing are $\chi$-bounded by any function. 
	Since t-perfect graphs are precisely $K_4$-free h-perfect graphs, the statement that the class of h-perfect graphs is $\chi$-bounded implies that there is a constant $k$ so that every t-perfect graph is $k$-colourable.
	Because \cref{thm:hperfect} is a corollary of \cref{thm:main}, this is not an alternative proof of a finite bound on the chromatic number of $t$-perfect graphs.

	\subsection{Our approach}
	
	Let us now sketch how we prove \cref{thm:main}.
	Our first step is to reduce the problem into the case where there are no \say{short} odd cycles. 
	For that, we will use a nice lemma, which
	originated from Sbihi and Uhry~\cite{SU1984} and was explicitly stated in the Ph.~D.~thesis of Marcus~\cite{Marcus1996}.
	It allows us to deduce that we can intersect all shortest odd cycles in a t-perfect graph by one stable set.
	Removing such a stable set increases the length of the shortest odd cycle by at least two and decreases the chromatic number by at most one.
	By repeatedly applying this argument, we reduce our problem to proving a bound on the chromatic number of an arbitrary t-perfect graph $G$ with no odd short odd cycles. In our case, we only need that $G$ has no odd cycles of length less than $11$.
	
	After this reduction step, we will use techniques developed for proving $\chi$-bounded\-ness for graph classes.
	We assume for a contradiction that $G$ has \say{large} chromatic number and no short odd cycles.
	By a standard inductive argument, we may assume that $G$ is connected. 
	The first tool we use is to partition the vertex set into levels $L_0$, $L_1$, $L_2$, $\ldots$ where $L_i$ is the set of vertices of distance exactly $i$ from a fixed vertex $v$.
	By a standard and simple method for proving $\chi$-boundedness, there is an integer~$i$ so that $\chi(G[L_i])\ge \frac12 \chi(G)$.
	Since there are no short odd cycles, $i$ has to be larger than some constant.
	
	Since $G[L_i]$ has large chromatic number, we can prove that it must contain various structures as induced subgraphs.
	We will use this fact to show that $G$ must contain an induced subgraph that is not $t$-perfect, a contradiction.
	An \emph{odd wheel} is a graph consisting of an odd cycle and a vertex adjacent to every other vertex on the cycle called its \emph{center}. It is an easy consequence of the definition of t-perfection that odd wheels are not t-perfect.
	Graphs with high chromatic number need not contain any wheels as an induced subgraph \cite{davies2023triangle,pournajafi2023burling}, but fortunately for us it is enough to show that $G$ contains an odd wheel as a \say{t-minor} \cite{GS1998}.
	A \emph{t-minor} of a graph~$G$ is a graph obtained from~$G$ by a sequence of vertex deletions and t-contractions, where a \emph{t-contraction} is an operation that is to contract all edges incident with a fixed vertex~$w$ when the neighbourhood of $w$ is a stable set.
	It is known that every t-minor of a t-perfect graph is t-perfect, 
	shown by Gerards and Shepherd~\cite{GS1998},

	The odd cycles of a graph are a critical in determining whether it is t-perfect. 
	By definition, the parity of cycles is preserved under taking t-minors and so we need to be careful with parity when we hunt for an odd wheel t-minor in $G$.
	We overcome this obstacle by introducing a structure we call an \say{arithmetic rope}, which we find to be of independent interest. By using the assumption that $G[L_i]$ has large chromatic number,
	we will show that $G[L_i]$ contains a 5-arithmetic rope.
	Our proof method is similar to that of Chudnovsky, Scott, Seymour, and Spirkl~\cite{CSSS20}, who showed that the class of graphs without long odd induced cycles is $\chi$-bounded.
	
	Informally, our $5$-arithmetic rope consists of $5$ vertices $q_1$, $q_2$, $q_3$, $q_4$, $q_5$ (with $q_6=q_1$) inside $L_i$ pairwise far apart in~$G$ such that for each $j\in\{1,2,\ldots,5\}$, $G[L_i]$ has both an odd-length induced path and an even-length induced path from $q_j$ to $q_{j+1}$
	so that any choice of one of the two paths for each $j$ gives an induced cycle containing all $5$ vertices.
	
	Once we have shown that $G[L_i]$ contains a 5-arithmetic rope $\mathcal{Q}$, to conclude that $G$ contains an odd wheel t-minor it suffices to show that there is a way of extending some choice of an odd cycle of our 5-arithmetic rope into an odd wheel t-minor. This will follow from the fact that $G$ has no short odd cycles, the definition of the levelling, and the property that the vertices $q_1, q_2, q_3, q_4, q_5$ are pairwise far apart. These arguments, along with the idea of arithmetic ropes, comprise the bulk of what is novel in our paper.
	We provide a more detailed sketch below.

	\paragraph{Extracting an odd wheel t-minor}

	For each~$j\in\{1,2,\ldots,5\}$, we choose one vertex $x_j$ in $L_{i-1}$ that is adjacent to~$q_j$
	and a vertex $y_j$ in $L_{i-2}$ that is adjacent to~$x_j$.
	As $q_1$, $q_2$, $q_3$, $q_4$, $q_5$ are pairwise far apart, 
	these vertices $x_1$, $x_2$, $x_3$, $x_4$, $x_5$, $y_1$, $y_2$, $y_3$, $y_4$, $y_5$ are all distinct and 
	each of $x_1$, $x_2$, $x_3$, $x_4$, $x_5$ has degree $1$ in 
	the subgraph of $G$ induced by these $10$ vertices.
	A simple lemma will show that $G[L_0\cup L_1\cup \cdots \cup L_{i-3}\cup \{x_1,x_2,x_3,x_4,x_5,y_1,y_2,y_3, y_4,y_5\}]$ contains a connected bipartite induced subgraph~$H$ containing $\{x_1,x_2,x_3,x_4,x_5\}$.
	Three out of these five vertices, say $x_a$, $x_b$, and $x_c$ with $a<b<c$, will be on the same side of the bipartite subgraph
	by the pigeonhole principle.
	We will delete the other two vertices in $\{x_1,x_2,x_3,x_4,x_5\}\setminus \{x_a,x_b,x_c\}$ from $H$ to obtain a connected bipartite induced subgraph~$H'$. 
	Now, we take three odd-length induced paths from $q_a$ to $q_b$, 
	from~$q_b$ to $q_c$, and from~$q_c$ to $q_a$ 
	to obtain an odd induced cycle $C$ in $G[L_i]$ such that $q_a$, $q_b$, and~$q_c$ split $C$ into three odd-length induced paths.
	
	We then consider the subgraph $G'$ of $G$ induced by $V(H')\cup V(C)$.
	We will then show that this structure will give us an odd wheel as a t-minor of~$G'$. 
	By applying t-contractions to every vertex of $H'$ on the side of the bipartite subgraph $H'$ not containing $\{x_a, x_b,x_c\}$, 
	we will obtain a t-minor $G''$ where all vertices of $H'$ are identified as a single vertex, say~$w$.
	So, $G''$ is a graph consisting of an odd cycle~$C$ with an extra vertex $w$ such that $w$ is adjacent to $q_a$, $q_b$, and~$q_c$ and possibly more. 
	Let $G'''$ be the graph obtained from $G''$ by repeatedly applying t-contractions to degree-$2$ vertices.
	Since $q_a$, $q_b$, and $q_c$ split~$C$ into odd-length paths and each $t$-contraction preserves the parity of the length of these paths in $C$, 
	we deduce that 
	$G'''$ is an odd wheel with at least three vertices on the rim.
	This implies that an odd wheel is a t-minor of $G''$, contradicting the assumption that $G$ is t-perfect.
	We remark that this is the reason why we want to construct a $5$-arithmetic rope: if we do not enforce any parity conditions on the length of subpaths of~$C$ split by neighbours of~$w$, then we may end up identifying two of $q_a$, $q_b$, and $q_c$, giving a t-perfect~$G'''$, failing to provide a contradiction.

	\subsection{Organisation}
	This paper is organised as follows.
	In \cref{sec:preliminaries} we will review definitions and key properties t-perfect graphs, including t-minors.
	In \cref{sec:overview} we will give an overview of our proof of \cref{thm:main} and explain how we reduce the problem to graphs with no short odd cycles and how we deduce \cref{thm:hperfect} from \cref{thm:main}.
	In \cref{sec:ropes}, we will introduce the concept of arithmetic ropes, that will give us a cyclic structure with correct parity of the lengths of the paths inside a graph of large chromatic number.
	In \cref{sec:oddwheel}, we will complete the proof by finding an odd wheel as a t-minor in a graph of large chromatic number without short odd cycles.
	In \cref{sec:conclusion}, we will conclude the paper with discussions and open problems.

	\section{Preliminaries}\label{sec:preliminaries}
	\label{sec:prelim}
	
	All graphs in this paper are simple, having no loops and no parallel edges.
	For two graphs $H$ and $G$, we say $G$ is \emph{$H$-free} if $G$ has no induced subgraph isomorphic to $H$.
	A \emph{stable} set of a graph is a set of pairwise non-adjacent vertices.
	A graph is \emph{$k$-colourable} if its vertex set is a union of $k$ stable sets.
	The \emph{chromatic number} $\chi(G)$ of a graph $G$ is the minimum non-negative integer $k$ such that it is $k$-colourable.
	For $X\subseteq V(G)$, we often use $\chi(X)$ to denote $\chi(G[X])$, where $G[X]$ is the \emph{induced subgraph} of $G$ on vertex set $X$.
	A graph is called \emph{$k$-critical} if its chromatic number is $k$ and removing any vertex makes decreases the chromatic number. 
	A \emph{hole} is an induced cycle of length at least $4$. An \emph{anti-hole} is the complement of a hole.

	We write $N_G(v)$ to denote the set of all neighbours of a vertex $v$ in $G$
	and let $N_G[v]=\{v\}\cup N_G(v)$.
	For a non-negative integer $r$, we write $N_G^r(v)$ to denote the set of all vertices of distance exactly $r$ from $v$ in $G$
	and $N_G^r[v]$ to denote the set of all vertices of distance at most $r$ from $v$ in $G$.
	We may omit the subscripts $G$ if it is clear from the context.
	
	A vertex set $B$ \emph{covers} a vertex set $C$ if $B$ and $C$ are disjoint and 
	every vertex in $C$ has a neighbour in~$B$.
	For a graph $G$, the \emph{$G$-distance} between two vertices $x$ and $y$ is the length of a shortest path from $x$ to $y$.
	A vertex set $X$ is \emph{anticomplete} to a vertex set $Y$ if every vertex in $X$ is non-adjacent to all vertices in $Y$.
	
	A \emph{t-contraction} is an operation to contract all edges in $G[N_G[v]]$ for some vertex~$v$ whose set of neighbours is stable.
	A graph $H$ is a \emph{t-minor} of a graph $G$ if $H$ can be obtained from~$G$ by a sequence of vertex deletions and t-contractions. 
	Gerards and Shepherd~\cite{GS1998} showed the following.
	
	\begin{lemma}[Gerards and Shepherd~\cite{GS1998}]\label{lem:tminor}
		Every t-minor of a t-perfect graph is t-perfect.
	\end{lemma}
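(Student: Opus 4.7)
The plan is to handle the two t-minor operations separately. Vertex deletion is the easy case: given $G$ t-perfect and $x^* \in \tstab{G \setminus v}$, extend $x^*$ to $\tilde{x} \in \mathbb{R}^{V(G)}$ by $\tilde{x}_v = 0$. All defining inequalities of $\tstab{G}$ at $\tilde{x}$ are inherited from those satisfied by $x^*$, so $\tilde{x} \in \tstab{G} = \ssp{G}$, and in any decomposition $\tilde{x} = \sum_S \lambda_S \chi^S$ every $S$ with $\lambda_S > 0$ omits $v$ and hence is stable in $G \setminus v$. This shows $x^* \in \ssp{G \setminus v}$.

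For t-contraction, fix $w \in V(G)$ with $N(w)$ stable, let $G'$ be the t-contraction, and write $w'$ for the resulting vertex. Given $x^* \in \tstab{G'}$, define $\tilde{x} \in \mathbb{R}^{V(G)}$ by $\tilde{x}_u = x^*_u$ for $u \notin N[w]$, $\tilde{x}_w = 1 - x^*_{w'}$, and $\tilde{x}_v = x^*_{w'}$ for $v \in N(w)$. The first task is to verify $\tilde{x} \in \tstab{G}$. Nonnegativity reduces to $0 \le x^*_{w'} \le 1$, with the upper bound coming from any edge inequality at $w'$ in $G'$ (if $w'$ is isolated in $G'$ then $\{w\} \cup N(w)$ is a union of components of $G$ and is handled separately). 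Edge inequalities in $G$ split into cases matched to edges of $G'$ or to the identity $\tilde{x}_w + \tilde{x}_v = 1$ for $v \in N(w)$. The main work is the odd cycle inequality. For an odd cycle $C$ in $G$ meeting $N[w]$, delete $w$ from $C$ if present and interpret the image in $G'$, in which each $V(C) \cap N(w)$-vertex is contracted to $w'$, as a closed walk at $w'$ of odd total length. Stability of $N(w)$ forces each arc between consecutive visits to $w'$ to have length at least $2$, and parity forces at least one arc to have odd length. For each even arc of length $\ell$, summing edge inequalities in $G'$ yields $x^*_{w'} + \sum_{\text{internal}} x^*_u \le \ell/2$; for each odd arc, the arc together with $w'$ is an odd cycle in $G'$ (its internal vertices are distinct and lie outside $N[w]$), so its odd cycle inequality gives $x^*_{w'} + \sum \le (\ell-1)/2$. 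Summing across arcs yields the required bound $\sum_{v \in V(C)} \tilde{x}_v \le (\abs{V(C)}-1)/2$.

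With $\tilde{x} \in \tstab{G} = \ssp{G}$, write $\tilde{x} = \sum_S \lambda_S \chi^S$ over stable sets $S$ of $G$. The key structural observation is that $\tilde{x}_w + \tilde{x}_v = 1$ for every $v \in N(w)$, combined with $wv \in E(G)$, forces each $S$ with $\lambda_S > 0$ to contain exactly one of $\{w, v\}$; applied across all $v \in N(w)$, each such $S$ is of one of two types: (A) $w \in S$ and $S \cap N(w) = \emptyset$, or (B) $w \notin S$ and $N(w) \subseteq S$. Project type-(A) sets to $S \setminus \{w\}$ and type-(B) sets to $(S \setminus N(w)) \cup \{w'\}$. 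Both projections are stable in $G'$: in type (A) the image lies in $V(G) \setminus N[w]$, and in type (B) any $u \in S \setminus N(w)$ is non-adjacent in $G$ to every vertex of $N(w) \subseteq S$, hence non-adjacent to $w'$ in $G'$. A direct coordinate check shows the resulting convex combination equals $x^*$, giving $x^* \in \ssp{G'}$. The main obstacle is the odd cycle verification and its arc decomposition; the clean (A)/(B) dichotomy is what makes the projection work, since the naive projection does not preserve stability in general.
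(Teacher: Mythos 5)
Your proof is correct, and it takes a genuinely different route from the paper's. The paper restricts attention to \emph{vertices} $x$ of $\tstab{\tilde G}$: it defines the same lift $y$ (your $\tilde x$), argues that $y$ is a vertex of $\tstab{G}$ because the tight constraints determining $x$ plus the constraints $y_u + y_v = 1$ for $v \in N_G(u)$ determine $y$, and then invokes t-perfection of $G$ to conclude $y$ is the incidence vector of a stable set $S$ of $G$. From integrality of $y$ it gets for free that $S \cap N_G[u]$ is either $\{u\}$ or $N_G(u)$, which is exactly your type-(A)/(B) dichotomy, and the projection to a stable set $\tilde S$ of $\tilde G$ is immediate. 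You instead start from an arbitrary $x^* \in \tstab{G'}$, verify $\tilde x \in \tstab{G}$ from first principles (in particular carrying out the arc decomposition for the odd cycle inequalities, which is the technical heart of your version), decompose $\tilde x$ as a convex combination of stable sets of $G$, and derive the (A)/(B) dichotomy from tightness of the edge constraints $wv$. What the paper's vertex-based argument buys is brevity: it only needs to show that one distinguished point is a vertex and hence integral, avoiding any reasoning about convex decompositions. What your argument buys is explicitness: the paper's phrase ``it is easy to observe that $y$ is determined by the corresponding inequalities'' quietly presupposes $y \in \tstab{G}$, and your arc-parity computation is exactly the content hidden in that remark (together with a routine check of nonnegativity and edge inequalities). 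Your observation that the sum of arc lengths is odd, forcing at least one odd arc, is the step that makes the odd cycle inequality come out with the right constant, and your handling of the degenerate length-two arcs via edge inequalities is also necessary since those arcs are not cycles in $G'$.
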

	For completeness, we include an expanded version of the proof in Gerards and Shepherd~\cite[(14)]{GS1998}.
	\begin{proof}
		It is an easy and well known fact that an induced subgraph of a t-perfect graph is t-perfect, see Eisenbrand et al.~\cite[Lemma 4.1]{combinatorial-t-perfect}.
		Thus, it is enough to prove that if $G$ is a $t$-perfect graph and $u \in V(G)$ has degree at least two and
		$N_G(u)$ is a stable set, then the graph $\tilde{G}$ obtained by contracting all edges incident is also $t$-perfect.  
		Let us write $\tilde u$ to denote the vertex of~$\tilde G$ corresponding to $u$ of $G$.
		
		Suppose that $x\in \tstab{\tilde G}$ is a vertex of $\tstab{\tilde G}$.
		We define $y\in \mathbb R^{V(G)}$ by 
		\[
		y_v:= \begin{cases}
			x_v & \text{if $v\notin N_G[u]$},\\
			x_{\tilde u} & \text{if $v$ is adjacent to $u$},\\
			1-x_{\tilde u} & \text{if $v=u$}.
		\end{cases}
		\] 
		Since $x$ is a vertex, there is a set of tight inequalities from the definition of $\tstab{\tilde G}$ that determines~$x$.
		Then it is easy to observe that $y$ is determined by the corresponding inequalities for $\tstab{G}$ as well as the additional constraints $x_u+x_{v}=1$ for every $v\in N_G(u)$.
		Thus, $y$ is also a vertex of $\tstab{G}$. 
		Since $G$ is t-perfect, $y$ is the incidence vector of some stable set $S$ of $G$.
		Thus, $S\cap N_G[u]$ is either $\{u\}$ or $N_G(u)$. 
		
		Let $\tilde S:=S\setminus \{u\}$ if $u\in S$ and $\tilde S:=(S\setminus N_G(u))\cup \{\tilde u\}$ otherwise.
		Then $\tilde S$ is a stable set of $\tilde G$ and $x$ is the incidence vector of $\tilde S$. Thus $x\in \ssp{\tilde G}$. Therefore $\tilde G$ is t-perfect.
	\end{proof}

	For positive integers $k\ge 3$, the \emph{$k$-wheel} $W_k$ is the graph consisting of a cycle of length~$k$ and a single additional vertex adjacent to every vertex of the cycle.
	An \emph{odd wheel} is any $k$-wheel with $k$ odd.
	We will need the simple fact that odd wheels are not t-perfect.
	\begin{lemma}[Folklore; see~{\cite[Proposition 3.6.5]{benchetrit-PhD}}]\label{lem:oddwheelforbid}
		No odd wheel is t-perfect.
	\end{lemma}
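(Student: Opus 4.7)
The plan is to exhibit an explicit fractional point $x \in \tstab{W_k} \setminus \ssp{W_k}$ for every odd $k \ge 3$, which by definition of t-perfection rules out $W_k$. Since $W_k$ is vertex-transitive on the rim, I would guess a vector constant on the rim: set $x_c = 1/k$ at the centre $c$, and $x_u = (k-1)/(2k)$ at each of the $k$ rim vertices.

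First I would check that $x \in \tstab{W_k}$. Nonnegativity is immediate, and the edge inequalities reduce to $(k-1)/k \le 1$ for rim--rim edges and $(k+1)/(2k) \le 1$ for centre--rim edges. For the odd cycle inequalities, I would note that every odd cycle in $W_k$ is either the whole rim (length $k$) or a cycle of odd length $\ell$ with $3 \le \ell \le k$ formed by a contiguous arc of the rim together with $c$. On the rim cycle the sum of $x$-values is exactly $(k-1)/2$, meeting the inequality with equality. On a mixed cycle of length $\ell$, the sum is $1/k + (\ell-1)(k-1)/(2k)$, and comparing to $(\ell-1)/2$ reduces to the one-line identity $(\ell-1)k - (\ell-1)(k-1) - 2 = \ell - 3 \ge 0$, with equality exactly on the triangles through $c$.

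Next I would show that $x \notin \ssp{W_k}$ by a total-weight argument. Every stable set $S$ of $W_k$ containing $c$ must equal $\{c\}$, and every other stable set is a stable set of the odd cycle on the rim, so $\abs{S} \le (k-1)/2$. Consequently every point of $\ssp{W_k}$ has coordinate sum at most $(k-1)/2$. But
\[
\sum_{v \in V(W_k)} x_v = \frac{1}{k} + k \cdot \frac{k-1}{2k} = \frac{1}{k} + \frac{k-1}{2} > \frac{k-1}{2},
\]
so $x$ cannot lie in $\ssp{W_k}$, which finishes the argument.

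There is no substantive obstacle here. The only care required is to correctly enumerate the odd cycles of $W_k$ (just the rim plus the mixed cycles through $c$) and to confirm the odd-cycle inequality on those mixed cycles, which is the single computation above; everything else is either immediate or follows from the fact that $\alpha(W_k) = (k-1)/2$.
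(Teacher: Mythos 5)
Your proof is correct: every computation checks out, the enumeration of odd cycles in $W_k$ (the rim plus the ``arc through the centre'' cycles) is exhaustive, and $\alpha(W_k)=(k-1)/2$ for odd $k\ge 3$ is right. The overall strategy — certifying $\tstab{W_k}\neq\ssp{W_k}$ by exhibiting a point in one but not the other — is the same as the paper's, but the details differ in a genuine way. The paper uses the universal point $\tfrac13\mathbf{1}$, which lies in $\tstab{G}$ for \emph{every} graph $G$, so no verification of the odd-cycle inequalities is needed; the price is that for large $k$ its coordinate sum $(k+1)/3$ is not bigger than $\alpha(W_k)=(k-1)/2$ (indeed for $k\ge 7$ it is strictly smaller), so a bare coordinate-sum comparison fails and the paper must instead analyse the structure of any putative convex decomposition: the centre forces $\lambda_1=\tfrac13$ on the singleton $\{v\}$, after which the remaining stable sets live on the rim and are too small. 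You instead tailor the vector to $W_k$, sitting it tight on the rim inequality and on the triangles through the centre, which pushes the coordinate sum strictly above $\alpha(W_k)$ and lets you reject it by a one-line averaging argument; the price is checking membership in $\tstab{W_k}$, which requires the (easy but nonempty) classification of odd cycles in the wheel. Both are short; yours trades a slightly harder membership check for a cleaner non-membership check.

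One cosmetic point: the arc-plus-centre cycles actually have lengths $3\le\ell\le k+1$, not $3\le\ell\le k$ as you wrote, since the arc may use up to $k$ rim vertices. This is harmless because the length-$(k+1)$ cycle is even and so contributes no odd-cycle inequality, but it is worth stating correctly.
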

	\begin{proof}
		Let $G$ be an odd wheel with a vertex $v$ such that $G-v$ is a cycle.
		Clearly $\frac13\mathbf{1} \in \tstab{G}$.
		Suppose that $\tstab{G}=\ssp{G}$.
		Then $\frac13\mathbf{1} =\sum_{i=1}^k \lambda_i \mathbf{1}_{S_i}$ for some stable sets $S_1,\ldots,S_k$ and positive reals $\lambda_1,\ldots,\lambda_k$ such that $\sum_{i=1}^k \lambda_i=1$.
		We may assume that $\lambda_1=\frac13$ and $S_1=\{v\}$ because 
		for every stable set $S$ of~$G$, $v\notin S$ or $S=\{v\}$. 
		This implies that $S_2$, $S_3$, $\ldots$, $S_k$ are stable sets in $G-v$, and therefore $\abs{S_i}< \frac{\abs{V(G)}-1}{2}$ for all $i\in \{2,3,\ldots,k\}$.
		Then, $\frac{\abs{V(G)}}{3}=\mathbf{1} \cdot \frac13\mathbf{1} = \sum_{i=1}^k \lambda_i \abs{S_i} < \frac{1}{3}+\frac23\frac{\abs{V(G)}-1}{2}$, which is a contradiction.
	\end{proof}
	
	Observe that Lemma 2.2 immediately implies that no $t$-perfect graph contains a clique of size four.
	
	The \emph{odd girth} of a graph is the length of a shortest odd cycle. (If it has no odd cycle, then it is $\infty$.) We shall freely use the following basic observation throughout the paper.
	\begin{observation}\label{lem:ball}
		Let $r$ be a non-negative integer.
		If the odd girth of a graph~$G$ is larger than $2r+1$, then $\chi(G[N_G^r[v]])\le 2$ for every vertex~$v$.
		\qed
	\end{observation}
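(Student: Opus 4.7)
The plan is to exhibit a proper $2$-colouring of $G[N_G^r[v]]$ using parity of distance from~$v$. Concretely, for each vertex $u$ with $d_G(v,u)\le r$, assign colour $d_G(v,u) \bmod 2$. To verify this is proper, I need to show that no edge $uw$ of $G[N_G^r[v]]$ has $d_G(v,u)\equiv d_G(v,w)\pmod 2$.

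Suppose for contradiction such an edge $uw$ exists. Choose shortest paths $P_u$ from $v$ to $u$ and $P_w$ from $v$ to $w$ in $G$, of lengths $d_G(v,u)$ and $d_G(v,w)$ respectively, both at most~$r$. Concatenating $P_u$, the edge $uw$, and the reverse of $P_w$ produces a closed walk in $G$ of length $d_G(v,u) + d_G(v,w) + 1$, which is odd by our parity assumption, and of total length at most $2r+1$. Since every closed walk of odd length contains an odd cycle, and the length of such a cycle cannot exceed the length of the walk, $G$ has an odd cycle of length at most $2r+1$, contradicting the hypothesis on odd girth.

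There is no genuine obstacle here; the only thing to be slightly careful about is the standard fact that every odd closed walk contains an odd cycle, but this is immediate by induction on length (any closed walk that is not already a cycle can be split at a repeated vertex into two shorter closed subwalks, one of which must be odd). The statement also covers the trivial case $r=0$, where $N_G^r[v]=\{v\}$ and the bound holds vacuously.
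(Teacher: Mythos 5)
Your proof is correct. The paper states this as an observation and omits the proof entirely (the \verb|\qed| is placed directly in the statement), so there is no competing argument to compare against; your parity-colouring argument with the odd-closed-walk-contains-odd-cycle step is exactly the standard proof one would supply.
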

	
	\section{Reducing to the case where there are no short odd cycles}\label{sec:overview}
	
	Our main technical result which implies \cref{thm:main} and in turn \cref{thm:hperfect} is as follows:
	\begin{theorem}\label{thm:technical-main}
		Every graph $G$ with odd girth at least $11$ that contains no odd wheel as a t-minor is $199049$-colourable.
	\end{theorem}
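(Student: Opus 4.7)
The plan is to proceed by contradiction: assume $G$ has odd girth at least $11$ and $\chi(G) \ge 199050$, and construct an odd wheel as a t-minor of $G$, contradicting the hypothesis via \cref{lem:oddwheelforbid,lem:tminor}. After restricting to a connected component of maximum chromatic number, fix a vertex $v$ and consider the BFS levelling $L_i := N_G^i(v)$. Every edge of $G$ lies within a single level or between two consecutive levels, so two-colouring the even and odd levels with disjoint palettes yields $\chi(G) \le 2\max_i \chi(G[L_i])$, and hence some $L_i$ satisfies $\chi(G[L_i]) \ge 99525$. Because \cref{lem:ball} forces $G[N_G^4[v]]$ to be bipartite, we must have $i \ge 5$, which leaves two full lower levels $L_{i-1}, L_{i-2}$ available below~$L_i$.

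Next I would find, inside $G[L_i]$, a \emph{$5$-arithmetic rope}: five vertices $q_1, \ldots, q_5 \in L_i$ that are pairwise far apart in $G$, together with, for each $j$ (taken cyclically), both an induced even-length and an induced odd-length $q_j$--$q_{j+1}$ path in $G[L_i]$, arranged so that any transversal consisting of one path per edge of the ``rope cycle'' glues together into an induced cycle through all five $q_j$'s. Establishing the existence of this structure in any graph of sufficiently large chromatic number is the main technical input and is to be handled separately in \cref{sec:ropes}, via an iterative $\chi$-bounded-style construction in the spirit of Chudnovsky, Scott, Seymour, and Spirkl in which one passes repeatedly to a connected subgraph of still-large chromatic number while controlling the parities of paths between newly selected vertices. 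This is where I expect the principal difficulty, and where the large constant $199049$ is effectively paid for.

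Given such a rope, I would lift it to an odd wheel t-minor of $G$ as follows. For each $j$, pick $x_j \in L_{i-1}$ adjacent to $q_j$ and $y_j \in L_{i-2}$ adjacent to $x_j$; the pairwise-far-apart property of the $q_j$'s ensures that the ten selected vertices are distinct and that each $x_j$ has exactly one neighbour (namely $y_j$) inside $\{x_1,\ldots,x_5,y_1,\ldots,y_5\}$. A short BFS-tree lemma then produces a connected bipartite induced subgraph $H^* \subseteq G[L_0 \cup \cdots \cup L_{i-3} \cup \{x_1,\ldots,x_5,y_1,\ldots,y_5\}]$ containing all five $x_j$'s. By pigeonhole, three of the $x_j$'s, say $x_a, x_b, x_c$, lie on a common side of the bipartition; removing the other two and tidying up leaves a connected bipartite induced subgraph $H'$ with sides $A$ and $B$ where $\{x_a,x_b,x_c\} \subseteq B$. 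Using the rope, I would then choose odd-length induced paths $q_a$--$q_b$, $q_b$--$q_c$, $q_c$--$q_a$ whose union is an odd induced cycle $C$ in $G[L_i]$ partitioned by $q_a, q_b, q_c$ into three odd arcs.

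Finally, in $G[V(H') \cup V(C)]$, I would sequentially t-contract the vertices of $A$: each $a \in A$ lies in some level $L_k$ with $k \le i-2$, so $a$ has no neighbour in $V(C) \subseteq L_i$, and the bipartiteness of $H'$ keeps the neighbourhood of each $A$-vertex (or of its current merged descendant) stable throughout the process. The cumulative effect is to collapse $H'$ into a single vertex $w$ adjacent to at least $q_a, q_b, q_c$ on $C$. I would then t-contract each remaining degree-$2$ vertex of $C$ in turn; each such step preserves the parity of the arc that contains it, so the three odd arcs between $q_a, q_b, q_c$ remain odd, keeping $q_a, q_b, q_c$ in distinct equivalence classes on the final rim. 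The result is a wheel with centre $w$ and rim of odd length at least~$3$, which by \cref{lem:tminor} is a t-minor of $G$ and by \cref{lem:oddwheelforbid} is not t-perfect, contradicting the hypothesis. The principal obstacle is the rope construction; the odd-wheel extraction described here is carefully parity-controlled but otherwise a routine application of t-minor operations.
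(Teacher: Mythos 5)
Your proposal follows the paper's proof essentially step for step: the BFS levelling with the factor-of-two reduction, invoking \cref{thm:5ropefinding} to get a $5$-arithmetic rope in a level of chromatic number at least $99525$, picking $x_j\in L_{i-1}$ and $y_j\in L_{i-2}$, extracting a connected bipartite induced subgraph via \cref{lem:bipartite}, applying pigeonhole to get three $x_j$'s on one side, and collapsing via t-contractions to an odd wheel (as in \cref{lem:oddwheel,lem:oddwheel3}). The constants also match, so this is the same argument and it is correct.
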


	\paragraph{Remarks on \cref{thm:technical-main}}
	With more technical arguments we are able to prove that triangle-free graphs containing no odd wheel as a t-minor have bounded chromatic number, but we do not know how to do this for graphs containing triangles.
	It was recently shown by Carbonero, Hompe, Moore, and Spirkl \cite{carbonero2023counterexample} that there are $K_4$-free graphs with arbitrarily large chromatic number that contain no induced triangle-free graph with chromatic number more than four.
	
	In general, it is not true that graphs without a t-minor~$H$ are $\chi$-bounded.
	This is because if a graph $G$ contains a graph $H$ as a t-minor, then $G$ also contains $H$ as an induced minor, and Pawlik, Kozik, Krawczyk, Laso{\'n}, Micek, Trotter, and Walczak~\cite{pawlik2014triangle} showed that there are graphs $H$ such that the class of graphs not containing $H$ as an induced minor is not $\chi$-bounded.
	
	There are also other similar notions of wheels, such as a graph consisting of an induced cycle of length at least four and an extra vertex with at least three neighbours on the cycle, whose exclusion as an induced subgraph does not imply $\chi$-boundedness \cite{davies2023triangle,pournajafi2023burling}.
	\\
	
	The remainder of this section is dedicated to deriving \cref{thm:main} and then \cref{thm:hperfect} assuming \cref{thm:technical-main}.
	We will use a lemma in the Ph.D.~thesis of Marcus~\cite[Proposition 3.6]{Marcus1996} to reduce the problem to the case where there are no short odd cycles.
	Such an argument also appeared earlier in Sbihi and Uhry~\cite[Proposition 5.3]{SU1984}.

	\begin{lemma}\label{color-reduction}
		If the class of t-perfect graphs with no odd cycle of length at most $2k+1$ is $c$-colourable, then the class of t-perfect graphs is $(c+k)$-colourable. 
	\end{lemma}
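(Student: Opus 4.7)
The plan is to start from an arbitrary t-perfect graph $G_0 = G$, repeatedly strip off a stable set meeting every shortest odd cycle, and show that at most $k$ such peeling steps yield a t-perfect graph $G_k$ of odd girth larger than $2k+1$. Applying the hypothesis of the lemma to $G_k$ then yields a $c$-colouring, and each stripped stable set receives one fresh colour, giving at most $c+k$ colours in total.

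The crux is the following sublemma: \emph{if $G$ is a t-perfect graph of finite odd girth $2\ell+1$, then there is a stable set $S$ meeting every odd cycle of length $2\ell+1$.} To prove it, I would test the vector $x := \tfrac{\ell}{2\ell+1}\mathbf{1} \in \reals^{V(G)}$ against the defining inequalities of $\tstab{G}$: nonnegativity is clear, the edge inequality reduces to $\tfrac{2\ell}{2\ell+1}\leq 1$, and the inequality for an odd cycle $C$ of length $m\geq 2\ell+1$ reduces to $\tfrac{\ell m}{2\ell+1}\leq \tfrac{m-1}{2}$, which rearranges to $m\geq 2\ell+1$. Hence $x\in \tstab{G}=\ssp{G}$, so $x=\sum_i \lambda_i\,\chi^{S_i}$ for stable sets $S_i$ and positive $\lambda_i$ summing to $1$. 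For any odd cycle $C$ of length $2\ell+1$, each $S_i$ satisfies $\abs{S_i\cap C}\leq \ell$ (the independence number of an odd cycle of length $2\ell+1$), and summing yields
\[
\ell \;=\; \sum_{v\in C} x_v \;=\; \sum_i \lambda_i\,\abs{S_i\cap C} \;\leq\; \ell,
\]
so equality holds throughout and $\abs{S_i\cap C}=\ell\geq 1$ for every $i$. Any such $S_i$ therefore meets every shortest odd cycle of $G$, proving the sublemma.

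With the sublemma in hand, the iteration is routine. Suppose $G_i$ has odd girth $2\ell_i+1\leq 2k+1$: apply the sublemma to obtain a stable set $S_{i+1}$ hitting all odd cycles of $G_i$ of length $2\ell_i+1$, and set $G_{i+1}:=G_i-S_{i+1}$. Then $G_{i+1}$ is t-perfect, since induced subgraphs of t-perfect graphs are t-perfect, and any odd cycle of length $2\ell_i+1$ in $G_{i+1}$ would contradict the hitting property of $S_{i+1}$, so the odd girth of $G_{i+1}$ is at least $2\ell_i+3$. Because any finite odd girth is at least $3$, iterating this at most $k$ times produces $G_k$ with odd girth greater than $2k+1$. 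The hypothesis of the lemma then supplies a $c$-colouring of $G_k$, and assigning one fresh colour to each of $S_1,\dots,S_k$ extends it to a $(c+k)$-colouring of $G$.

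The only non-routine step is the sublemma, and within it the only idea is locating the right test vector $\tfrac{\ell}{2\ell+1}\mathbf{1}$, which is precisely the scalar multiple of $\mathbf{1}$ that makes the minimum-length odd-cycle inequalities tight; everything else is bookkeeping built on the fact that t-perfection passes to induced subgraphs.
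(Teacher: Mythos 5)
Your proof is correct, and it takes a mildly but genuinely different route from the paper's. The paper factors the argument through an intermediate lemma on fractional chromatic numbers (their Lemma~\ref{lem:fractionalcolor}): it shows $\chi^*(G)=2+\tfrac1\ell$ with equality iff $G$ has a $(2\ell+1)$-cycle, picks a minimum fractional colouring, removes the largest stable set $S_1$ from it, and infers that the fractional chromatic number of $G-S_1$ drops strictly below $2+\tfrac1\ell$, so by the ``equality iff'' direction $G-S_1$ has no $(2\ell+1)$-cycle. You instead go straight to the convex decomposition of the test vector $\tfrac{\ell}{2\ell+1}\mathbf{1}$ (the same vector underlying the paper's Lemma~\ref{lem:fractionalcolor}), read off equality in the odd-cycle inequality, and conclude that \emph{every} stable set appearing with positive weight meets every shortest odd cycle. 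This bypasses the fractional-colouring machinery and the ``equality iff'' case analysis, is slightly cleaner, and in fact proves a marginally stronger fact (any $S_i$ works, not just a specially chosen one); what it gives up relative to the paper's Lemma~\ref{lem:reduce} is the lower bound $\abs{S}\ge\tfrac{\ell}{2\ell+1}\abs{V(G)}$ on the size of the stable set, which is irrelevant for the colouring reduction but is the part of that lemma the paper cites for its remark that $\chi(G)=O(\log\abs{V(G)})$.
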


	We write $\chi^*(G)$ to denote the \emph{fractional chromatic number} of a graph $G$, which is defined as the minimum $\sum_{S\in\mathcal S}f(S)$ 
	for the set $\mathcal S$ of all stable sets of~$G$
	subject to the conditions that 
	\begin{enumerate}[label=(\roman*)]
		\item $\sum_{S\in \mathcal S: v\in S} f(S)\ge 1$ for every vertex $v$ of $G$, 
		\item $f(S)\ge 0$ for all $S\in \mathcal S$.
	\end{enumerate}
	The following lemma is 
	due to Sbihi and Uhry~\cite[Theorem 5.2]{SU1984}, see Schrijver~\cite[(68.81)]{schrijver2003combinatorial}.
	\begin{lemma}\label{lem:fractionalcolor}
		Let $G$ be a t-perfect graph. Let $\ell$ be a positive integer. If $G$ has no odd cycle of length less than $2\ell+1$, then $\chi^*(G)\le 2+\frac{1}{\ell}$. The equality holds if and only if $G$ has an odd cycle of length $2\ell+1$.
	\end{lemma}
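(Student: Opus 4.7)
The plan is to translate the fractional chromatic number into a membership question about the polytope $\tstab{G}$, and then simply check the defining inequalities on a multiple of the all-ones vector. The LP scaling identity I will use is that $\chi^*(G)\le t$ whenever there is a point $y\in\ssp{G}$ with $y\ge (1/t)\mathbf{1}$ coordinatewise: given a fractional cover $f$ of value $t$, the vector $y_v=\sum_{S\ni v}f(S)/t$ lies in $\ssp{G}$ by convexity and satisfies $y\ge(1/t)\mathbf{1}$. Consequently, it will be enough to show that $(1/t)\mathbf{1}\in\ssp{G}$ for $t=2+1/\ell$, and since $G$ is t-perfect we have $\ssp{G}=\tstab{G}$, so this reduces to checking the inequalities \ref{eq:nonnegative}, \ref{eq:edge}, \ref{eq:oddcycle} for the point $(1/t)\mathbf{1}$.

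Next I would verify the three families in turn. Non-negativity is automatic; the edge inequality becomes $2/t\le 1$, i.e.\ $t\ge 2$; and for an odd cycle $C$ of length $k$, the odd cycle inequality becomes $k/t\le(k-1)/2$, i.e.\ $t\ge 2k/(k-1)=2+2/(k-1)$. The hypothesis that $G$ has no odd cycle of length less than $2\ell+1$ gives $k\ge 2\ell+1$, so $2+2/(k-1)\le 2+1/\ell$. Setting $t=2+1/\ell\ge 2$ therefore satisfies every one of the defining constraints, so $(1/t)\mathbf{1}\in\tstab{G}=\ssp{G}$ and $\chi^*(G)\le 2+1/\ell$.

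For the equality characterisation, the easy direction is that the presence of an odd cycle $C_{0}$ of length $2\ell+1$ forces $\chi^*(G)\ge 2+1/\ell$: every stable set meets $V(C_0)$ in at most $\ell$ vertices, so summing the covering constraints over $v\in V(C_0)$ in any fractional cover $f$ gives $\ell\sum_S f(S)\ge\sum_S f(S)\,|S\cap V(C_0)|\ge 2\ell+1$, whence $\sum_S f(S)\ge 2+1/\ell$. Conversely, if $G$ has no odd cycle of length exactly $2\ell+1$, then by parity the odd girth is at least $2\ell+3$, and applying the upper bound already proved with $\ell+1$ in place of $\ell$ yields $\chi^*(G)\le 2+1/(\ell+1)<2+1/\ell$, ruling out equality. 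The main thing to be careful about is the initial LP scaling step, which I would justify explicitly rather than cite; every subsequent step is a routine calculation with the odd cycle inequality.
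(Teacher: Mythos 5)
Your proof is correct and follows essentially the same approach as the paper: check that $\frac{1}{2+1/\ell}\mathbf{1}$ satisfies the non-negativity, edge, and odd-cycle inequalities defining $\tstab{G}$, invoke t-perfection to place it in $\ssp{G}$, and read off a fractional colouring of value $2+\frac1\ell$; your handling of the equality case is just a more explicit version of the paper's one-line appeal to $\chi^*(C_{2\ell+1})=2+\frac1\ell$.

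One small slip worth fixing: you announce that $\chi^*(G)\le t$ whenever some $y\in\ssp{G}$ satisfies $y\ge\frac1t\mathbf{1}$, but the justification you give (starting from a fractional cover $f$ of value $t$ and building $y$) actually proves the converse implication. The direction you need is equally immediate and should be stated: write $y=\sum_i\lambda_i\chi^{S_i}$ as a convex combination of incidence vectors of stable sets, set $f(S_i)=t\lambda_i$, and observe that $\sum_{S\ni v}f(S)=t\,y_v\ge 1$ for every $v$ while $\sum_S f(S)=t$. Everything else, including the computation $2k/(k-1)\le 2+\frac1\ell$ for $k\ge 2\ell+1$ and the parity argument reducing the strict-inequality case to the lemma with $\ell+1$, is sound.
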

	We remark that Bruhn and Fuchs~\cite{P5free-BruhnFuchs} conjectured that a graph $G$ is t-perfect if and only if $\chi^*(H)=2+\frac{2}{\operatorname{oddgirth}(H)-1}$ for every non-bipartite t-minor~$H$ of~$G$.
	The above lemma is the easy direction of this. 
	For completeness, we include its proof.
	
	\begin{proof}
		Let $G=(V,E)$ be a t-perfect graph with no odd cycle of length less than $2\ell$.
		Then $\frac{1}{2+(1/\ell)}\mathbf{1}$ is in $\tstab{G}$ because every odd cycle of $G$ has length at least $2\ell+1$.
		Since $G$ is t-perfect, $\frac{1}{2+({1}/{\ell})}\mathbf{1}$ is a convex combination of the incidence vectors of stable sets of~$G$. 
		This means that $\mathbf{1}$ is a non-negative linear combination of the incidence vectors of stable sets of~$G$
		where the sum of the coefficients is $2+\frac{1}{\ell}$ and therefore $\chi^*(G)\le 2+\frac{1}{\ell}$.
		
		Since the cycle of length $2\ell+1$ has the fractional chromatic number $2+\frac{1}{\ell}$, the equality holds if and only if $G$ has an odd cycle of length $2\ell+1$.
	\end{proof}
	
	In the next lemma, we show that there is a stable set hitting all shortest odd cycles of a t-perfect graph, because the fractional chromatic number of a t-perfect graph is tight on its shortest odd cycle. This idea was used by Sbihi and Uhry~\cite[Proposition 5.3]{SU1984} to show that if all odd induced cycles of a t-perfect graph have the same length, then it is $3$-colourable. 
	Later, Marcus~\cite[Proposition 3.30]{Marcus1996} used this idea repeatedly to show that the chromatic number of a h-perfect graph~$G$ with at most $t$ distinct lengths of odd induced cycles is at most $\omega(G)+t$.
	
	\begin{lemma}[Marcus~{\cite[Proposition 3.6]{Marcus1996}}]\label{lem:reduce}
		Let $G$ be a t-perfect graph of odd girth at least $2\ell+1$.
		Then $G$ contains a stable set~$S$ such that 
		$\abs{S}\ge \frac{\ell}{2\ell+1}\abs{V(G)}$ and 
		$G-S$ has odd girth at least $2\ell+3$.
	\end{lemma}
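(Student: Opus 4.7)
The plan is to exploit t-perfection by producing an explicit fractional stable set that is \emph{tight} on every shortest odd cycle, then extract an integral stable set from its convex decomposition. Concretely, I would consider the vector $x=\frac{\ell}{2\ell+1}\mathbf{1}\in \mathbb{R}^{V(G)}$ and first verify that $x\in\tstab{G}$: nonnegativity is trivial, edge inequalities hold since $\frac{2\ell}{2\ell+1}<1$, and for any odd cycle $C$ of length $k\ge 2\ell+1$ the odd cycle inequality reduces to $k\ge 2\ell+1$, which is exactly the odd girth assumption. Note that for cycles of length exactly $2\ell+1$ the odd cycle inequality is tight.

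Since $G$ is t-perfect, $\tstab{G}=\ssp{G}$, so I can write $x=\sum_{i} \lambda_i \chi^{S_i}$ as a convex combination of incidence vectors of stable sets, with $\lambda_i>0$ and $\sum_i \lambda_i=1$. Summing coordinates gives $\sum_i \lambda_i \abs{S_i}=\frac{\ell}{2\ell+1}\abs{V(G)}$, so by averaging at least one stable set $S$ in the support satisfies $\abs{S}\ge \frac{\ell}{2\ell+1}\abs{V(G)}$. This handles the size requirement, but I still need $S$ to meet every shortest odd cycle; here a single averaging argument is not enough, so I will use tightness to show \emph{every} $S_i$ in the support hits \emph{every} shortest odd cycle.

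To that end, let $C$ be any odd cycle of $G$ of length $2\ell+1$. Restricting the decomposition to $C$ gives $\sum_i \lambda_i \abs{S_i\cap C} = \sum_{v\in C} x_v = \ell$. On the other hand, any stable set in $C_{2\ell+1}$ has size at most $\ell$, so $\abs{S_i\cap C}\le \ell$ for every $i$, and combined with $\sum_i\lambda_i=1$ this forces $\abs{S_i\cap C}=\ell\ge 1$ for every $i$ in the support. In particular the stable set $S$ produced above intersects $C$. Since $G$ already has odd girth at least $2\ell+1$, hitting every odd cycle of length exactly $2\ell+1$ ensures that $G-S$ has odd girth at least $2\ell+3$, completing the proof.

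The only subtle step is the tightness argument in the last paragraph; the rest is a direct t-perfection computation. I expect no serious obstacle, since the whole scheme is driven by the fact that $\frac{\ell}{2\ell+1}\mathbf{1}$ saturates exactly the odd cycle inequalities of length $2\ell+1$, which is essentially the equality case of \cref{lem:fractionalcolor}.
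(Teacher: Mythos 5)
Your proof is correct, and it takes a somewhat different route to the paper's in the key step. Both approaches start from the observation that the vector $\frac{\ell}{2\ell+1}\mathbf{1}$ lies in $\tstab{G}=\ssp{G}$ (the paper phrases this as $\chi^*(G)=2+\frac{1}{\ell}$ via \cref{lem:fractionalcolor}, which is the same statement in disguise since $\frac{1}{2+1/\ell}=\frac{\ell}{2\ell+1}$), and both extract a large stable set from a decomposition by an averaging argument. Where you diverge is in how you establish that the chosen stable set meets every odd cycle of length $2\ell+1$. The paper's proof is global: it deletes one stable set $S_j$ from the fractional colouring, observes that the remaining stable sets (with their original weights) still fractionally colour $G-S_j$ with total weight strictly below $2+\frac{1}{\ell}$, and then invokes the ``equality iff'' half of \cref{lem:fractionalcolor} to conclude that $G-S_j$ has no odd cycle of length $2\ell+1$. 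Your proof instead reads off the conclusion locally: the odd cycle inequality for a $(2\ell+1)$-cycle $C$ is tight at $\frac{\ell}{2\ell+1}\mathbf{1}$, the decomposition restricted to $C$ gives $\sum_i\lambda_i\abs{S_i\cap V(C)}=\ell$, and since each term is at most $\ell$ and the weights sum to one, equality must hold term by term, forcing every $S_i$ in the support to meet $C$. Your tightness argument is arguably more elementary and self-contained (it does not need \cref{lem:fractionalcolor} as a black box), and it gives the slightly stronger conclusion that \emph{every} stable set appearing in the decomposition hits every shortest odd cycle, whereas the paper's argument is a cleaner packaging once \cref{lem:fractionalcolor} is already available. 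Both are sound.
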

	It is possible to deduce from \cref{lem:reduce} that $\chi(G)\le O(\log \abs{V(G)})$ for  t-perfect graphs~$G$, as explained in~\cite[Proposition 3.36]{Marcus1996}.
	As the paper of Marcus~{\cite[Proposition 3.6]{Marcus1996}} is written in French, we include a proof of \cref{lem:reduce} for completeness.
	
	\begin{proof}
		We may assume that $G$ has an odd cycle of length $2\ell+1$.
		By~\cref{lem:fractionalcolor}, $\chi^*(G)=2+\frac{1}{\ell}$.
		So there is a list of stable sets $S_1,\ldots,S_k$ of $G$ 
		and a list of positive weights $w_1,w_2,\ldots,w_k$ such that $w_1+w_2+\cdots+w_k=2+\frac{1}{\ell}$ and for each vertex $v$ of $G$, $\sum_{i: v\in S_i}w_i\ge 1$.
		Then $\abs{V(G)}\le  \sum_{i=1}^k w_i \abs{S_i}\le (\sum_{i=1}^k w_i)\max_{j} \abs{S_j}$ and therefore there is $j$ such that $\abs{S_j}\ge \frac{\ell}{2\ell+1}\abs{V(G)}$. By symmetry, we may assume that $j=1$. 
		
		Let $H=G-S_1$.
		Then $H$ is a t-perfect graph such that $\chi^*(H)=2+\frac{1}{\ell}-w_1<2+\frac{1}{\ell}$ and therefore $H$ has no odd cycle of length $2\ell+1$.
		Thus $S_1$ is a stable set that intersects every odd cycle of length $2\ell+1$.
	\end{proof}

	\cref{color-reduction} is a direct consequence of \cref{lem:reduce}. Assuming \cref{thm:technical-main}, we can now prove \cref{thm:main}.
	
	\begin{proof}[Proof of \cref{thm:main} assuming \cref{thm:technical-main}.]
		By \cref{lem:tminor,lem:oddwheelforbid}, no t-perfect graph contains an odd wheel as a t-minor.
		By \cref{color-reduction,thm:technical-main}, it then follows that every t-perfect graph is 
		$199053$-colourable.
	\end{proof}
	
	The same proof allows us to deduce the following theorem from \cref{thm:technical-main}.
	\begin{theorem}\label{thm:trianglefree}
		Every triangle-free t-perfect graph is $199052$-colourable.
		\qed
	\end{theorem}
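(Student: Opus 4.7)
The plan is to repeat the reduction argument from the proof of \cref{thm:main}, saving one step by exploiting the assumption that $G$ is triangle-free. Since a triangle-free graph has no odd cycle of length $3$, its odd girth is already at least $5$, so only three iterations of \cref{lem:reduce} are required to raise the odd girth to at least $11$, rather than the four iterations used implicitly in \cref{color-reduction} when starting from an arbitrary t-perfect graph.

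Concretely, I would let $G$ be a triangle-free t-perfect graph and apply \cref{lem:reduce} with $\ell=2$ to obtain a stable set $S_1$ such that $G_1 := G - S_1$ is t-perfect with odd girth at least $7$. Applying the lemma again with $\ell=3$ to $G_1$ yields a stable set $S_2$ with $G_2 := G_1 - S_2$ t-perfect of odd girth at least $9$. A third application with $\ell=4$ to $G_2$ produces a stable set $S_3$ with $G_3 := G_2 - S_3$ t-perfect of odd girth at least $11$. At every step the resulting graph is t-perfect because induced subgraphs of t-perfect graphs are t-perfect, and therefore by \cref{lem:tminor,lem:oddwheelforbid} none of these graphs contains an odd wheel as a t-minor.

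Finally, \cref{thm:technical-main} applied to $G_3$ gives a proper colouring of $G_3$ using $199049$ colours; combined with the three extra colour classes $S_1,S_2,S_3$, this yields a proper $(199049+3) = 199052$-colouring of $G$, as required. Every ingredient is already in place, so no step is really an obstacle; the only point worth emphasising is that the triangle-free hypothesis saves exactly one iteration of \cref{lem:reduce} compared to the derivation of \cref{thm:main} from \cref{thm:technical-main}, which accounts precisely for the improvement from $199053$ to $199052$.
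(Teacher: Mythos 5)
Your proposal is correct and follows the same route the paper intends: the paper says ``the same proof'' applies, meaning the reduction from \cref{thm:technical-main} via iterated applications of \cref{lem:reduce}, and your observation that triangle-freeness already guarantees odd girth at least $5$, saving one iteration and hence one colour class, is exactly the right accounting.
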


	\begin{remark}
		\cref{thm:technical-main} is strictly stronger than \cref{thm:main}.
		There are $t$-imperfect graphs, such as sufficiently large even M\"{o}bius ladders \cite{shepherd-lehman}, which have arbitrarily large odd girth and do not contain an odd wheel as a t-minor.
	\end{remark}

	Next, we show that \cref{thm:trianglefree} implies \cref{thm:hperfect}.
	To see why this is the case, we use the following well-known fact about h-perfect graphs~$G$, due to Seb\H{o} in~\cite[Lemma 26]{clawfreetperfect-maya}, also in~\cite[Fact 3.2]{Marcus1996}.
	
	\begin{lemma}[{Seb\H{o} in~\cite[Lemma 26]{clawfreetperfect-maya}}, {\cite[Fact 3.2]{Marcus1996}}]\label{lem:reduceomega}
		Let $G$ be a h-perfect graph with $\omega(G)\ge3$. 
		Then $G$ contains a stable set $S$ such that $\omega(G-S)<\omega(G)$.
	\end{lemma}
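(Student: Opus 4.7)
The plan is to exhibit a suitable fractional point inside $\hstab{G}$ and extract the desired stable set from its decomposition into incidence vectors. Set $\omega = \omega(G)$ and consider the vector $x = \tfrac{1}{\omega}\mathbf{1} \in \mathbb{R}^{V(G)}$. The first step is to check that $x \in \hstab{G}$: nonnegativity is immediate, every clique $K$ satisfies $\sum_{v \in K} x_v = |K|/\omega \le 1$, and for every odd cycle $C$ of length $2k+1$ the inequality $\tfrac{2k+1}{\omega} \le k$ reduces to $k(\omega-2) \ge 1$, which holds because $k \ge 1$ and $\omega \ge 3$. (Note that the triangle case is exactly where the hypothesis $\omega \ge 3$ is needed and is automatically covered by the clique inequality as well.)

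Since $G$ is h-perfect, $x \in \ssp{G}$, so we can write $x = \sum_{i=1}^{m} \lambda_i\, \chi^{S_i}$ as a convex combination of incidence vectors of stable sets $S_i$ of $G$, with $\lambda_i > 0$ and $\sum_i \lambda_i = 1$. Now fix an arbitrary maximum clique $K$ of $G$. Summing the coordinates of $x$ over $K$ gives
\[
1 \;=\; \sum_{v \in K} x_v \;=\; \sum_{i=1}^{m} \lambda_i \,|S_i \cap K|.
\]
Because each $S_i$ is a stable set, $|S_i \cap K| \le 1$, so the equality above together with $\sum_i \lambda_i = 1$ forces $|S_i \cap K| = 1$ for every $i$ — and crucially, this holds for \emph{every} maximum clique $K$ of $G$ simultaneously, with the same collection $S_1,\dots,S_m$.

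Pick any one of the stable sets, say $S := S_1$. By the previous paragraph, $S$ meets every maximum clique of $G$ in exactly one vertex. Consequently every maximum clique of $G$ loses at least one vertex in $G - S$, so $G - S$ contains no clique of size $\omega(G)$, i.e.\ $\omega(G-S) < \omega(G)$, as required. The only nontrivial step is verifying $x \in \hstab{G}$, and this is essentially a one-line calculation once one notices that $\omega \ge 3$ makes the odd cycle inequalities slack; the rest is a standard tight-inequality argument, so I do not anticipate any real obstacle.
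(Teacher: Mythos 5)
Your proof is correct, and it takes a somewhat different route from the paper's. The paper proves the lemma by going through the fractional chromatic number: it cites the identity $\chi^*(G)=\omega(G)$ for h-perfect $G$ with $\omega(G)\ge 3$, then (as in its proof of \cref{lem:reduce}) removes a stable set carrying positive weight in an optimal fractional colouring to obtain $\chi^*(G-S)<\chi^*(G)$, and finally converts this back to a statement about clique numbers, with a small case split according to whether $\omega(G-S)\ge 3$. You instead argue directly with tight inequalities: since $\tfrac{1}{\omega}\mathbf{1}\in\hstab{G}=\ssp{G}$, any convex decomposition into incidence vectors of stable sets, combined with the fact that each clique inequality for a maximum clique is tight at $\tfrac{1}{\omega}\mathbf{1}$, forces every stable set in the decomposition to meet every maximum clique in exactly one vertex. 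This avoids both the fractional chromatic number and the case split, and it incidentally gives the slightly stronger conclusion $\omega(G-S)=\omega(G)-1$. The paper's version has the stylistic advantage of running in parallel with the proof of \cref{lem:reduce} (which is the statement about odd girth rather than clique number), so the two reductions are handled by the same mechanism; yours is more self-contained and arguably cleaner for this particular lemma.
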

	This follows from the observation that $\frac{1}{\max(\omega(G),3)}\mathbf{1}\in \hstab{G}$, see 
	Sbihi and Uhry~\cite[Theorem 5.2]{SU1984} or 
	Bruhn and Stein~\cite[(14)]{clawfreetperfect-maya}:
	\[
	\chi^*(G)=\omega(G) \quad\text{if $G$ is h-perfect and $\omega(G)\ge 3$.}
	\] 
	Since every induced subgraph of a  h-perfect graph is h-perfect, 
	by the same argument of \cref{lem:reduce}, we have the following proof. 
	\begin{proof}[Proof of \cref{lem:reduceomega}]
		There is a stable set~$S$ of~$G$ such that $\chi^*(G-S)<\chi^*(G)$, as in the proof of \cref{lem:reduce}. 
		We may assume that $\omega(G-S)\ge 3$, because otherwise it is trivial.
		Since both $G$ and $G-S$ are h-perfect, we have $\chi^*(G-S)=\omega(G-S)<\chi^*(G)=\omega(G)$.
	\end{proof}
	\begin{proof}[Proof of \cref{thm:hperfect} assuming \cref{thm:trianglefree}]
		Let $G$ be a h-perfect graph. 
		We may assume that $\omega(G)\ge 2$. 
		By \cref{lem:reduceomega}, every h-perfect graph~$G$ admits $\omega(G)-2$ stable sets such that after deleting all these stable sets, we have a triangle-free h-perfect induced subgraph~$H$. 
		Such a graph $H$ is t-perfect.
		By \cref{thm:trianglefree}, 
		$H$ is $199052$-colourable.
		Therefore, $G$ is $(\omega(G)+ 199050)$-colourable, as desired.
	\end{proof}

	The proof of \cref{thm:technical-main} is split into two parts.
	In the next section we show how to find so-called arithmetic ropes as induced subgraphs within graphs of large chromatic number and odd girth at least $11$.
	Then, in Section~\ref{sec:oddwheel}, we shall construct odd wheel t-minors in graphs of large chromatic number by using arithmetic ropes as a useful gadget.

	\section{Arithmetic ropes}
	\label{sec:ropes}
	
	An $r$-arithmetic rope is a graph consisting of $2r$ paths $Q_{1,1}, Q_{1,2}, \ldots , Q_{r,1}, Q_{r,2}$ with ends contained in a vertex set $\{q_1,\ldots , q_r\}$ so that (taking indices modulo $r$)
	\begin{itemize}
		\item for every $1\le i \le r$, we have that $Q_{i,1}$ and $Q_{i,2}$ are $q_1q_2$-paths with odd and even length, respectively.
		\item for every $(h_1, \ldots , h_r) \in \{1,2\}^r$, the graph $Q_{1,h_1} \cup Q_{2,h_2} \cup \cdots \cup  Q_{r,h_r}$ is an induced cycle, and
		\item the vertices $q_1,\ldots , q_r$ are pairwise at $G$-distance at least $5$.
	\end{itemize}
	
	We denote such an arithmetic rope by $(q_i,Q_{i,1},Q_{i,2})_{i=1}^r$.
	An example of such a graph is shown in \cref{fig:5arope}.
	The key feature of an $r$-arithmetic rope is that by choosing suitable $(h_1, \ldots , h_r) \in \{1,2\}^r$, one can control the parity of paths between $q_i$ and $q_j$ in an induced cycle containing $q_1, \ldots , q_r$.

	\begin{figure}
		
		\begin{center}
			\begin{tikzpicture}[scale=1, every node/.style={circle, draw, fill=black, inner sep=1pt, minimum size=2pt}]
				\node [label=$q_1$] at (0,3) (q1) {};
				\node [label=$q_2$] at (2,3.2) (q2) {};
				\node [label=$q_3$] at (4,2) (q3) {};
				\node [label=$q_4$] at (3,0) (q4) {};
				\node [label=$q_5$] at (1,0) (q5) {};
				
				\draw[thick,dotted] (q1) .. controls (0.5,3.5) and (1.5,2.5) .. (q2);
				\draw[thick,dotted] (q2) .. controls (3,4) and (3.5,1.5) .. (q3);
				\draw[thick,dotted] (q3) .. controls (4.5,1) and (3.3,-0.3) .. (q4);
				\draw[thick,dotted] (q4) .. controls (2,1) and (2,-.5) .. (q5);
				\draw[thick,dotted] (q5) .. controls (-0.5,.5) and (1.5,2.5) .. (q1);
				\draw[thick,dashed] (q1) .. controls (0.8,3) and (1.2,4) .. (q2);
				\draw[thick,dashed] (q2) .. controls (2.8,2) and (3,3) .. (q3);
				\draw[thick,dashed] (q3) .. controls (3.5,.5) and (5,-0.3) .. (q4);
				\draw[thick,dashed] (q4) .. controls (2,0) and (2,.5) .. (q5);
				\draw[thick,dashed] (q5) .. controls (-2,1.5) and (3.5,2) .. (q1);
			\end{tikzpicture}
		\end{center}
		\caption{A $5$-arithmetic rope. Dotted lines represent odd-length paths and dashed lines represent even-length paths.}
		\label{fig:5arope}
	\end{figure}
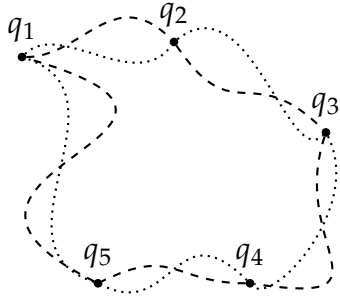
	
	The aim of this section is to prove the following.

	\begin{theorem}
		\label{thm:ropefinding}
		For any $r \in \mathbb{N}$, every graph $G$ with odd girth at least $11$ and $X\subseteq V(G)$ with $\chi(X) \ge 6^{r+1}+\frac{34}{5}(6^r-1)-1$ contains an $r$-arithmetic rope in $X$.
	\end{theorem}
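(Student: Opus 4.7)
I would prove \cref{thm:ropefinding} by induction on $r$, where the chromatic bound encodes the recurrence $f(r+1)=6f(r)+39$ (with $f(0)=5$). The case $r=0$ is vacuous; for the base case $r=1$ one needs a vertex $q_1$ together with two induced cycles through $q_1$ of opposite parities. Since the odd girth is at least $11$, \cref{lem:ball} tells us that every ball of radius $4$ is bipartite, so in any subgraph of chromatic number at least $5$, BFS from a suitably chosen vertex~$v$ produces a level $L_j$ with $j\ge 5$ that is not bipartite, and two BFS traceback routes from a pair of non-adjacent vertices of $L_j$ down to $v$ give induced cycles through $v$ whose lengths can be arranged to have both parities.

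\textbf{Inductive step.} Given $X\subseteq V(G)$ with $\chi(X) \ge 6 f(r) + 39$, I would pass to a connected induced subgraph of $G[X]$ of comparable chromatic number, fix a vertex $v$, and layer the remaining vertices by BFS levels $L_0,L_1,\ldots$ from $v$. By \cref{lem:ball}, the first few levels are bipartite and may be discarded, so some distant level $L_j$ with $j \ge 5$ carries the bulk of the chromatic number. Within $L_j$ I would further refine into roughly six structural classes --- for instance, by which vertex of $L_{j-1}$ the BFS traceback uses, or by neighbourhood relations to a small initial partial rope that is constructed first --- to isolate $X'\subseteq L_j$ with $\chi(X') \ge f(r)$ together with anticompleteness to a reserved region. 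The induction hypothesis applied inside $X'$ yields an $r$-arithmetic rope $(q_i,Q_{i,1},Q_{i,2})_{i=1}^{r}$. The reserved region together with BFS traceback towards $v$ then supplies a new vertex $q_{r+1}$ at $G$-distance at least $5$ from each of $q_1,\ldots,q_r$, together with odd-length and even-length induced paths joining $q_{r+1}$ to its cyclic neighbours $q_r$ and $q_1$; two traceback routes differing in length by one step yield the two parities required to splice $q_{r+1}$ into the rope.

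\textbf{Main obstacle.} The principal difficulty is preserving the defining property of an arithmetic rope across this extension: every combination of path choices must yield an induced cycle, so each new path $Q_{r+1,h}$ must be anticomplete to every non-adjacent old path $Q_{i,h'}$ and must be chord-free, while the two candidate paths joining $q_{r+1}$ to $q_r$ (and to $q_1$) must genuinely differ in parity. Coordinating parity, the pairwise distance-$5$ separation of the $q_i$, and these anticompleteness constraints while losing only a factor of $6$ in chromatic number per inductive step is the substantive technical work; the additive $+39$ in the recurrence absorbs the loss from discarding the constantly many early bipartite BFS levels and the constant-sized forbidden zones around existing $q_i$ needed to enforce the distance-$5$ condition.
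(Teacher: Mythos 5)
Your plan captures the broad skeleton of the paper's argument (BFS layering, passing to a distant level, inducting with a geometric loss in chromatic number, and assembling a cyclic structure at the end), and you have even reverse-engineered the recurrence $f(r+1)=6f(r)+39$, $f(0)=5$, correctly from the closed form. But there is a genuine gap at the heart of the inductive step, and a structural mismatch that would make your induction scheme considerably harder to execute than the paper's.

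The core difficulty — which you flag as the ``main obstacle'' but do not resolve — is the parity mechanism. You propose that ``two traceback routes differing in length by one step yield the two parities required,'' but BFS traceback routes to a fixed target are all of equal length (they are shortest paths), so a length-$1$ discrepancy has to be manufactured. The paper does this via the \emph{stable grading} machinery of \cref{lem:earlier1,lem:earlier2} together with a careful three-way case split in \cref{lem:ropeinduction}: it partitions the covering set $B$ into $B_0$ (no neighbour in the BFS trunk $M$), $B_1$ (odd $G[M\cup\{v\}]$-distance from $q$), and $B_2$ (even such distance), and then in the $B_0$-heavy case extracts an edge $u'q'$ inside $C_0$ so that the two paths $q\dots u'$ and $q\dots u'q'$ naturally differ by one in length, while in the $B_1/B_2$-heavy case the parity comes from the \emph{odd-vs-even distance in $M$} of the two vertices of the extracted edge. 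Your proposal does not produce either of these mechanisms, and the factor of $6$ in your recurrence (which in the paper arises as $2\times 3$ from the half-level choice composed with the three-way $C_0/C_1/C_2$ split) is then unexplained rather than forced.

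There is also a structural issue with running the induction directly on the statement of \cref{thm:ropefinding}: an $r$-arithmetic rope is a \emph{cycle}, so to grow it to an $(r+1)$-arithmetic rope you must cut one of the pairs $(Q_{r,1},Q_{r,2})$ open and re-route both of its parities through the new vertex $q_{r+1}$, while guaranteeing induced-ness and anticompleteness for all $2^{r+1}$ path choices simultaneously. The paper sidesteps this by introducing the auxiliary notion of a \emph{broken} $r$-arithmetic rope (a linear structure with two distinct ends), proving \cref{lem:broken} by iterating \cref{lem:ropeinduction} to extend one end, and only at the very end (in the proof of \cref{thm:ropefinding}) performing a single ``closing'' operation back through the low BFS levels. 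Your plan would instead have to perform a closing-and-reopening operation at every inductive step, and you would need to argue afresh each time that the newly reopened region is large enough and suitably anticomplete — none of which is addressed. In short, the proposal correctly identifies \emph{where} the difficulties lie but supplies neither the stable-grading device nor the broken-rope reformulation that the paper uses to overcome them.
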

	
	Chudnovsky, Scott, Seymour, and Spirkl~\cite{CSSS20} proved that the class of graphs without an induced long odd cycle is $\chi$-bounded.
	In one part of the proof, they find 
	three paths $P$, $Q_1$, $Q_2$ where $P$ is long, $Q_1$ has odd length, $Q_2$ has even length, and both $P\cup Q_1$ and $P\cup Q_2$ are (long) holes. By definition, one of $P \cup Q_1$, $P\cup Q_2$ is odd and the other is even. 
	Roughly, the idea of the proof of \cref{thm:ropefinding} is to carefully extend these ideas in order to iterate and find many such pairs of paths $Q_{i,1}$, $Q_{i,2}$ attached in order.
	Although doing this adds some difficulty, we are also able to simplify parts significantly due to the stronger odd girth  assumption.
	We remark that the idea from \cite{CSSS20} of finding paths $P, Q_1, Q_2$ is itself a variant of an older idea, \cite{recognizing-berge-graphs} introduces a precursor where the restriction on the length of $P$ is omitted.

	Let $G$ be a graph. We define a a stable grading to be an ordered partition of $V(G)$ into stable sets $(W_1, \dots, W_n)$.
	Formally, a \emph{stable grading} is a sequence $(W_1, \ldots , W_n)$ of pairwise disjoint stable sets of $V(G)$ such that their union is $V(G)$.
	We say that $u \in V(G)$ is \emph{earlier} than $v \in V(G)$ with respect to a stable grading $(W_1, \ldots , W_n)$ if $u \in W_i$ and $v \in W_j$ for some $i < j$.
	We require a couple of preliminary lemmas regarding stable gradings before we can start \say{building} part of an arithmetic rope.
	
	\begin{lemma}\label{lem:earlier1}
		Let $c \ge 1$ and let $(W_1, \ldots, W_n)$ be a stable grading of a graph $G$ with $\chi(G) \ge c + 2$. Then there exists a subset $X$ of $V(G)$ and an edge $uv$ of $G$ with the following properties:
		\begin{itemize}
			\item $G[X]$ is connected,
			\item $\chi(X) \ge c$,
			\item $u$ and $v$ are earlier than every vertex in $X$, and
			\item at least one of $u$ or $v$ has a neighbour in $X$.
		\end{itemize}
	\end{lemma}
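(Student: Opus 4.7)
The plan is to work inside a connected component of $G$ of chromatic number at least $c+2$; any valid witness $(X,u,v)$ there also works for $G$. For each index $j$, write $G_{\ge j}:=G[W_j\cup\cdots\cup W_n]$. Since each $W_j$ is stable, $\chi(G_{\ge j})\le\chi(G_{\ge j+1})+1$. Let $i$ be the largest index such that some component of $G_{\ge i}$ has chromatic number at least $c$. Then every component of $G_{\ge i+1}$ has chromatic number less than $c$, so $\chi(G_{\ge i+1})\le c-1$, and iterating the stable-removal inequality gives $\chi(G)\le (c-1)+i$; the hypothesis $\chi(G)\ge c+2$ therefore forces $i\ge 3$. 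I take $X:=V(C)$ for a component $C$ of $G_{\ge i}$ with $\chi(C)\ge c$; since $X\cap W_i$ is stable and $X\setminus W_i\subseteq V(G_{\ge i+1})$ has chromatic number at most $c-1$, we get $\chi(X)=c$, and $G[X]$ is connected by construction.

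Set $N:=N_G(X)\cap(W_1\cup\cdots\cup W_{i-1})$. Since $G$ is connected, $X$ is a component of $G_{\ge i}$ whose external neighbours all lie in $W_1\cup\cdots\cup W_{i-1}$, and (after discarding any empty classes) $W_1\cup\cdots\cup W_{i-1}$ is non-empty, so $N$ is non-empty. The edge $uv$ I want is any edge of $G[W_1\cup\cdots\cup W_{i-1}]$ with an endpoint in $N$: both endpoints then lie at levels strictly below $i$ and thus are earlier than every vertex of $X$, while the endpoint in $N$ is adjacent to $X$. This gives all four conclusions of the lemma.

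The hard step is to show such an edge exists, and I would argue by contradiction. If no such edge exists, then $N$ is a stable set in $G$ and no vertex of $N$ has a neighbour in $W_1\cup\cdots\cup W_{i-1}$. Because $X$'s external neighbours all lie in $N$, removing $N$ decomposes $G$ as $G-N = X\sqcup H$ with $H:=G[V(G)\setminus(N\cup X)]$; the stability of $N$ then gives $\chi(G)\le\chi(G-N)+1 = \max(c,\chi(H))+1$, hence $\chi(H)\ge c+1$. To close the contradiction with $\chi(G)\ge c+2$, I would apply an induction on $|V(G)|$ to $H$ with parameter $c-1$, yielding in $H$ a connected subgraph $X^{\flat}$ with $\chi(X^{\flat})\ge c-1$ together with an edge $u^{\flat}v^{\flat}$ earlier than $X^{\flat}$ and incident to it; the final step is to augment $X^{\flat}$ by one vertex of $X$ or of $N$, using $G$-adjacencies that were lost in passing to $H$, so as to raise the chromatic number to $c$ while preserving the earlier-than relation with $u^{\flat}v^{\flat}$. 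Verifying that such an augmentation always succeeds, using that $N$ is stable and confined to $W_{<i}$, is the technical crux of the proof.
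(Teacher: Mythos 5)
Your approach is genuinely different from the paper's (which partitions the vertex set into ``left-active'' vertices and the rest, then argues directly in two cases), but it contains a real gap, and moreover the pivotal intermediate claim you are trying to establish is false for the particular $X$ you fix.

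You choose $i$ to be the largest index with a component of $G_{\ge i}$ of chromatic number at least $c$, fix $X$ to be such a component, and then claim there must be an edge of $G[W_1\cup\cdots\cup W_{i-1}]$ with an endpoint in $N := N_G(X)\cap(W_1\cup\cdots\cup W_{i-1})$. That claim does not hold. Take $c=3$ and let $G$ consist of a $5$-cycle $x_1x_3x_2x_4x_5x_1$, a $K_5$ on $\{k_1,\ldots,k_5\}$, and two extra vertices $z,h$ with $z$ adjacent only to $x_1$ and $h$, and $h$ adjacent only to $z$ and $k_1$. Grade by $W_1=\{z,k_1\}$, $W_2=\{k_2\}$, $W_3=\{x_1,x_2,h,k_3\}$, $W_4=\{x_3,x_4,k_4\}$, $W_5=\{x_5,k_5\}$; one checks each class is stable and $\chi(G)=5=c+2$. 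Here $i=3$: the $C_5$ and the triangle $\{k_3,k_4,k_5\}$ are components of $G_{\ge 3}$ with $\chi=3$, while every component of $G_{\ge 4}$ is bipartite. If you pick $X$ to be the $C_5$, then $N=\{z\}$, and the only edge inside $G[W_1\cup W_2]=G[\{z,k_1,k_2\}]$ is $k_1k_2$, which misses $N$. The lemma of course still holds for this $G$ (take $X=\{k_3,k_4,k_5\}$ and the edge $k_1k_2$), but that is a different $X$ than the one your scheme commits to, so the contradiction you aim for cannot be extracted.

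Because of this the ``augmentation'' you flag as the technical crux cannot be repaired: after recursing into $H$ with parameter $c-1$ you may obtain a connected $X^\flat\subseteq V(H)$ with $\chi(X^\flat)=c-1$ exactly, and adding a single vertex to a graph never forces its chromatic number up; indeed in the example above $X^\flat=\{k_4,k_5\}$ has no neighbour in $X\cup N=\{x_1,\ldots,x_5,z\}$, so every augmentation by one such vertex leaves $\chi=2<c$. More fundamentally, even when the recursive call returns a useful witness $(X^\flat,u^\flat v^\flat)$, that witness lives in $H$ and need not involve $N$ at all, so it does not contradict the hypothesis that no edge of $G[W_{<i}]$ touches $N$. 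The paper avoids all of this by not committing to an $X$ up front: it looks at the set $A$ of ``left-active'' vertices (those adjacent to an earlier edge), and either $\chi(A)\ge c$ — in which case the earliest vertex of a heavy component of $G[A]$ hands you the edge — or $\chi(V(G)\setminus A)\ge 3$, and the earliest odd cycle inside a heavy component of that part produces a left-active vertex, a contradiction.
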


	\begin{proof}
		We say that a vertex $w\in V(G)$ is \emph{left active} if there exists an edge $uv$ of $G$ such that $u$ and $v$ are earlier than $w$ and $w$ is adjacent to at least one of $u$ or $v$.
		Let $A$ be the vertices of $G$ that are left active, and let $B=V(G)\setminus A$.
		
		Suppose first that $\chi(A)\ge c$.
		Let $C$ be a connected component of $G[A]$ with $\chi(C)\ge \chi(A) \ge c$.
		Let $i$ be a minimum integer such that  $1 \le i \le n$ and $W_i \cap V(C)$ is non-empty and let $w$ be a vertex of $W_i \cap V(C)$. Since $w$ is left-active, there exists an edge $uv$ of $G$ such that $u$ and $v$ are earlier than $w$ (and thus every vertex of $C$), and $w$ is adjacent to at least one of $u$ or $v$. Setting $X=V(C)$, the lemma follows.
		
		So, we may assume that $\chi(A)\le c-1$, and thus $\chi(B)\ge 3$.
		Let $D$ be a connected component of $G[B]$ with $\chi(D)\ge \chi(B) \ge 3$.
		Let~$k$ be a minimum integer such that $1\le k \le n$ and $G\left[\left( \bigcup_{i=1}^k W_i \right) \cap V(D) \right]$ contains an odd cycle. Such an integer $k$ exists since $\chi(D)\ge 3$.
		Since each $W_i$ is a stable set, there exists some~$w\in W_i \cap V(D)$ and an edge~$uv$ of~$G$ with $u,v \in \left( \bigcup_{i=1}^{k-1} W_i \right) \cap V(D)$ such that $w$ is adjacent to at least one of $u$ or $v$.
		But then, $w$ is left active, contradicting the fact that $w\in B$.
	\end{proof}
	
	We extend this slightly in the triangle-free case (with a slightly worse bound) in order to say that only $v$ and not $u$ has a neighbour in $X$.
	
	\begin{lemma}\label{lem:earlier2}
		Let $c \ge 1$ and let $(W_1, \ldots, W_n)$ be a stable grading of a triangle-free graph $G$ with $\chi(G) \ge c + 3$. Then there exists a subset $X$ of $V(G)$ and an edge $uv$ of $G$ with the following properties:
		\begin{itemize}
			\item $G[X]$ is connected,
			\item $\chi(X) \ge c$,
			\item $u$ and $v$ are earlier than every vertex in $X$,
			\item $u$ has no neighbour in $X$, and
			\item $v$ has a neighbour in $X$.
		\end{itemize}
	\end{lemma}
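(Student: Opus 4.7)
My plan is to mimic the strategy of Lemma~\ref{lem:earlier1}, applied with slightly more chromatic budget, and then use triangle-freeness in two separate ways to upgrade the output. First, apply Lemma~\ref{lem:earlier1} with parameter $c+1$ to the stable grading of $G$; since $\chi(G) \ge c + 3 = (c+1) + 2$, this yields $X_0 \subseteq V(G)$ and an edge $u_0 v_0$ such that $G[X_0]$ is connected, $\chi(X_0) \ge c+1$, both $u_0$ and $v_0$ are earlier than every vertex of $X_0$, and at least one of them has a neighbour in $X_0$. By symmetry assume $v_0$ has one. If $u_0$ has no neighbour in $X_0$, then setting $X := X_0$, $u := u_0$, $v := v_0$ immediately verifies all five required properties (with even $\chi(X) \ge c+1$).

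Otherwise both $u_0$ and $v_0$ have neighbours in $X_0$, and this is where triangle-freeness enters. Since $G$ is triangle-free, $N(u_0)$ is a stable set (two adjacent neighbours of $u_0$ would form a triangle with $u_0$), so $N(u_0) \cap X_0$ is stable and its deletion costs at most one colour: $\chi(X_0 \setminus N(u_0)) \ge c$. Moreover, triangle-freeness implies $N(u_0) \cap N(v_0) = \emptyset$ (else the common neighbour would form a triangle with the edge $u_0v_0$), so every neighbour of $v_0$ in $X_0$ lies in $X_0 \setminus N(u_0)$. Now look at the connected components of $G[X_0 \setminus N(u_0)]$; since components are anticomplete, some component $D$ has $\chi(D) \ge c$. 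In the favourable case that such a $D$ contains a neighbour of $v_0$, set $X := D$, $u := u_0$, $v := v_0$: then $G[X]$ is connected, $u$ and $v$ are earlier than $X \subseteq X_0$, $u$ has no neighbour in $X$ (as $X \subseteq X_0 \setminus N(u_0)$), and $v$ does have one.

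To deal with the remaining case, where every component of chromatic number at least $c$ is anticomplete to $N(v_0)$, I would pick such a component $D^\ast$ (which is therefore anticomplete to $\{u_0, v_0\}$) and look for an alternative witness edge. Since $G[X_0]$ is connected and $D^\ast$ is a component of $G[X_0 \setminus N(u_0)]$, there exists $s \in N(u_0) \cap X_0$ with $s$ adjacent to $D^\ast$. Choose such $s$ of minimum level in the stable grading. If $\ell(s)$ is smaller than the minimum level $i^\ast$ occurring in $D^\ast$, then $u_0 s$ is the desired edge and $X := D^\ast$, $u := u_0$, $v := s$ works, since $u_0$ is anticomplete to $D^\ast \subseteq X_0 \setminus N(u_0)$ and $s$ has a neighbour in $D^\ast$ by construction, with both $u_0$ and $s$ earlier than $D^\ast$.

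The main obstacle is the stubborn subcase where every $s \in N(u_0) \cap X_0$ adjacent to $D^\ast$ is at level at least $i^\ast$. To handle it, I would pick an earliest vertex $d^\ast \in D^\ast \cap W_{i^\ast}$, note that $d^\ast \in X_0$ inherits (from the construction of $X_0$ in the proof of Lemma~\ref{lem:earlier1}) the left-active property, and hence admits an edge $u' v'$ with $u', v'$ both at levels below $i^\ast$, $v'$ adjacent to $d^\ast$, and $u'$ not adjacent to $d^\ast$ (the last bit using triangle-freeness on the potential triangle $u'v'd^\ast$). A short case analysis, using that the subcase hypothesis forces $v' \notin X_0$ (otherwise $v'$ would contradict the minimality of $\ell(s)$), together with the stability of $N(u')$ in the triangle-free graph, then produces an edge and a connected subset of $D^\ast$ of chromatic number still at least $c$ satisfying all five required properties. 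This last case analysis is the technical heart of the proof, and carefully bounds the chromatic loss so that the final $X$ still has $\chi(X) \ge c$.
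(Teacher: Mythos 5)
Your opening move is the same as the paper's: apply \cref{lem:earlier1} with parameter $c+1$ to obtain $X_0$ and an edge $u_0v_0$ with $\chi(X_0)\ge c+1$. You also correctly dispose of the easy case where $u_0$ already has no neighbour in $X_0$. But from there you diverge, and the divergence creates a gap that the paper avoids.

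The gap is the choice of which neighbourhood to delete. You remove $N(u_0)$ and hope that $v_0$ still has a neighbour in a surviving component $D$ of large chromatic number. When this fails, you have to abandon $v_0$ and manufacture a fresh witness edge, which forces you into the ``stubborn subcase.'' Your sketch there invokes left-activity of a vertex $d^\ast\in D^\ast$ to get a new edge $u'v'$ with $v'$ adjacent to $d^\ast$, then argues $v'\notin X_0$, and says the stability of $N(u')$ finishes. But to make $u'$ have no neighbour in $X$ you must take $X\subseteq D^\ast\setminus N(u')$ (you cannot rule out that $u'\notin X_0$ has other neighbours in $D^\ast$), and $\chi(D^\ast\setminus N(u'))\ge\chi(D^\ast)-1\ge c-1$ is all the budget you have left after already paying one colour to pass from $X_0$ to $D^\ast$. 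So the claimed $\chi(X)\ge c$ is not established, and with the hypothesis $\chi(G)\ge c+3$ there is no slack to pay for a second stable-set deletion. The proposal explicitly defers this to an unwritten ``short case analysis,'' but that analysis is precisely where the argument breaks.

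The paper sidesteps all of this by deleting $N(v')$ rather than $N(u')$ (in your notation, delete $N(v_0)$, not $N(u_0)$). Take a component $C$ of $G[X_0\setminus N(v_0)]$ of maximum chromatic number; then $\chi(C)\ge c$ and $v_0$ has \emph{no} neighbour in $C$ by construction. If $u_0$ has a neighbour in $C$, simply swap roles: set $u:=v_0$, $v:=u_0$, $X:=V(C)$. If $u_0$ has no neighbour in $C$ either, use connectivity of $G[X_0]$ to find $w\in N(v_0)\cap X_0$ with a neighbour in $C$; triangle-freeness (applied to $u_0v_0$ and $w\in N(v_0)$) gives $u_0w\notin E(G)$, so $u:=u_0$, $v:=v_0$, $X:=V(C)\cup\{w\}$ works. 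The cost is only one stable set, matching the budget $\chi(G)\ge c+3$. I'd suggest redoing your ``otherwise'' branch with this symmetric swap in mind instead of hunting for a brand-new edge.
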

	
	\begin{proof}
		By \cref{lem:earlier1}, there exists a subset $X'$ of $V(G)$ and an edge $u'v'$ of $G$ with the following properties:
		\begin{itemize}
			\item $G[X']$ is connected,
			\item $\chi(X') \ge c+1$,
			\item $u'$ and $v'$ are earlier than every vertex in $X'$, and
			\item $v'$ has a neighbour in $X'$.
		\end{itemize}
		Let $C$ be a connected component of $G[X'\setminus N(v')]$ with maximum chromatic number.
		Since $G$ is triangle-free, we have that $\chi(C)\ge \chi(X')-1\ge c$.
		If $u'$ has a neighbour in $V(C)$, then the lemma follows by setting $u=v'$, $v=u'$, and $X=V(C)$.
		So, we may assume now that $u'$ has no neighbour in $V(C)$.
		Let $w$ be a neighbour of $v'$ in $X'$ that has a neighbour in $V(C)$.
		Note that $w$ is non-adjacent to $u'$ since $G$ is triangle-free.
		Then, the lemma follows by setting $u=u'$, $v=v'$, and $X=V(C)\cup \{w\}$.
	\end{proof}

	The following lemma extracts the key induction step that we will use to prove \cref{thm:ropefinding}. 
	\begin{lemma}\label{lem:ropeinduction}
		Let $c \ge 1$ and let $G$ be a graph with odd girth at least $11$, let $B,C\subseteq V(G)$ be disjoint vertex subsets with $B$ covering $C$, let $q\in V(G)\setminus C$ be a vertex such that $G[C\cup \{q\}]$ is connected and $\chi(C) \ge 6c+17$.
		
		Then there exists $B'\subseteq B$, $C'\subseteq C$, $q'\in C\setminus C'$ and two induced paths $Q_0$, $Q_1$ between $q$ and $q'$ in $G[(C \setminus C') \cup (B \setminus B') \cup \{q\}]$ such that
		\begin{itemize}
			\item $G[C'\cup \{q'\}]$ is connected,
			\item $\chi(C') \ge c$,
			\item $B'$ covers $C'$,
			\item $B'$ is anticomplete to $(V(Q_0)\cup V(Q_1))\setminus N^{2}[q']$,
			\item $V(Q_0)\cup V(Q_1)$ is contained in $C \cup \{q\} \cup ((B\cap N^2(q'))\setminus N^3(q))$,
			\item $C'$ is anticomplete to $(V(Q_0) \cup V(Q_1))\setminus \{q'\}$,
			\item the $G$-distance between $q$ and $C'\cup \{q'\}$ is at least $5$, and
			\item $Q_0$ has even length and $Q_1$ has odd length.
		\end{itemize}
	\end{lemma}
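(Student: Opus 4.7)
The plan is to combine BFS-levelling from $q$ with a stable-grading argument in the spirit of \cref{lem:earlier1} to produce $q'$ together with two induced paths of opposite parities. For $j \geq 0$ set $L_j := N_G^j(q) \cap C$. The odd-girth hypothesis and \cref{lem:ball} give $\chi(L_0 \cup \cdots \cup L_5) \leq 2$, so $\chi\!\left(\bigcup_{j \geq 6} L_j\right) \geq 6c + 15$. Edges of $G[C]$ run only within a BFS layer or between consecutive layers, so the standard alternating-palettes colouring yields some $j \geq 6$ with $\chi(L_j) \geq 3c + 8$. Fix a connected component $Y$ of $G[L_j]$ with $\chi(Y) \geq 3c + 8$; any vertex in $Y$ will sit at $G$-distance exactly $j \geq 6$ from $q$, already handling condition~(7).

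Next, pick $q' \in Y$ and run a second BFS from $q'$ inside $G[Y]$. Using \cref{lem:ball} and alternating palettes again, cut out a connected subset $Z \subseteq Y$ far from $q'$ with $\chi(Z) \geq c + 2$. The BFS layers from $q'$ give a natural stable grading of $Z \cup \{q'\}$, so \cref{lem:earlier1} produces an edge $uv$ on two consecutive earlier layers and a connected set $X \subseteq Z$ with $\chi(X) \geq c$, where $\{u,v\}$ has a neighbour in $X$. Because $u$ and $v$ lie on consecutive BFS layers from $q'$, the BFS-tree paths $R_u, R_v$ from $q'$ to $u$ and to $v$ have opposite parities. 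Choosing $x \in X$ adjacent to $\{u,v\}$, the two walks $R_u + ux$ and $R_v + vx$ extract (after a standard inducedness clean-up using that later-layer vertices of $Y$ chord only into their immediate parents) into two induced $q'$-to-$x$ paths inside $Y$ of opposite parities.

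To produce $Q_0, Q_1$ ending at $q'$, prepend a greedy BFS-from-$q$ induced path of length $j$ through $L_1, \ldots, L_{j-1}$ ending at $q'$; if a final parity fix is required, detour the last step through a neighbour $b \in B \cap N(q')$, which exists because $B$ covers $C$ and which lies in $(B \cap N^2(q')) \setminus N^3(q)$ because $d_G(q,b) \geq j - 1 \geq 5$. Finally, set $C'$ to be a connected high-chromatic subset of $X$ anticomplete to $(V(Q_0) \cup V(Q_1)) \setminus \{q'\}$ (obtained by excising a bounded-size neighbourhood of the constructed paths, a loss absorbed by the slack in the $6c+17$ budget), and set $B' := N_G(C') \cap B$. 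Conditions~(1)–(4) and (6) then hold by construction: connectivity and $\chi(C') \geq c$ from the bookkeeping; $B'$ covers $C'$ because $B$ covered $C$; and $B'$ is anticomplete to the paths away from $N^2[q']$ because $C'$ is far from the path interiors.

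The principal difficulty will be simultaneously achieving the two parities, inducedness of $Q_0$ and $Q_1$, and the anticompleteness properties with $C'$ and $B'$. The parity shift comes \say{for free} from the edge $uv$ crossing consecutive BFS layers of $Y$, but arranging that the prepended $q$-to-$q'$ BFS segment is induced, that the $B$-detour lies in the exact allowed set, and that the excision of path neighbourhoods still leaves a connected subset of $X$ with $\chi \geq c$ is where the delicate case-analysis will live. The constant $6c+17$ budgets precisely: a loss of $2$ for the radius-$5$ ball around $q$, a factor $2$ for BFS palettes at $q$, a further loss of $2$ and factor $2$ for the BFS and palettes inside $Y$, the additive $+2$ of \cref{lem:earlier1}, and a small additive slack for path-neighbourhood excision.
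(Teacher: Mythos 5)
Your approach differs structurally from the paper's, and it contains two genuine gaps.

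The central one is the claimed stable grading of $Z\cup\{q'\}$ by the BFS layers of the \emph{second} BFS, taken from $q'$ inside $G[Y]$. BFS layers are not generally stable sets: an edge between two vertices at level $\ell$ from $q'$ creates an odd closed walk of length $2\ell+1$, hence an odd cycle of length at most $2\ell+1$. With odd girth $\geq 11$ this only forbids such edges for $\ell\le 4$; for $\ell\ge 5$ layers can have internal edges. But you need $\chi(Z)\ge c+2$, and $\chi(N^4[q'])\le 2$, so $Z$ necessarily reaches level $\ge 5$, exactly where the layers stop being stable. So the hypothesis of \cref{lem:earlier1} is not satisfied, and the claimed parity gap between $R_u$ and $R_v$ never materialises. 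The paper sidesteps this entirely: it never does a second BFS inside the high-chromatic region. Instead it grades $C^*\subseteq M_{t+1}$ by an \emph{external} ordering — by adjacency to $M_t$ (stability from triangle-freeness) or by two-step adjacency through $B_h$ to $M$ (stability from odd girth $\ge 7$, because $W_i\subseteq N^2(m_i)$). The parity shift then comes from a spare edge $u'q'$ in one case and from the $B_1$/$B_2$ parity classification of $B$-vertices in the other, rather than from consecutive BFS levels.

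A second problem is your treatment of the anticompleteness conditions. You propose to \say{excise a bounded-size neighbourhood of the constructed paths} from $X$ to obtain $C'$ anticomplete to $Q_0\cup Q_1$, and absorb the loss in the constant. But the paths can be long (length comparable to $j$, which is unbounded), and even if they were short, the neighbourhood of a path inside $C$ need not have bounded chromatic number. The paper does not excise anything: the stable grading is designed so that $u'$ and $q'$ are \emph{earlier} than all of $C'$, which automatically makes $C'$ anticomplete to the paths since the paths route through strictly earlier material (plus $M\cup M_t$ or $M\cup B_h$, which $C'$ can see only through $q'$). Similarly, $B'$ is carved out using the ordering ($B'=B_0$ or $B'=B_h\setminus N(\{m_1,\ldots,m_k\})$), and this definition — not excision — is what ensures $B'$ covers $C'$ while avoiding the paths. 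Without an analogue of this "earlier than" machinery your bookkeeping does not close.
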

	
	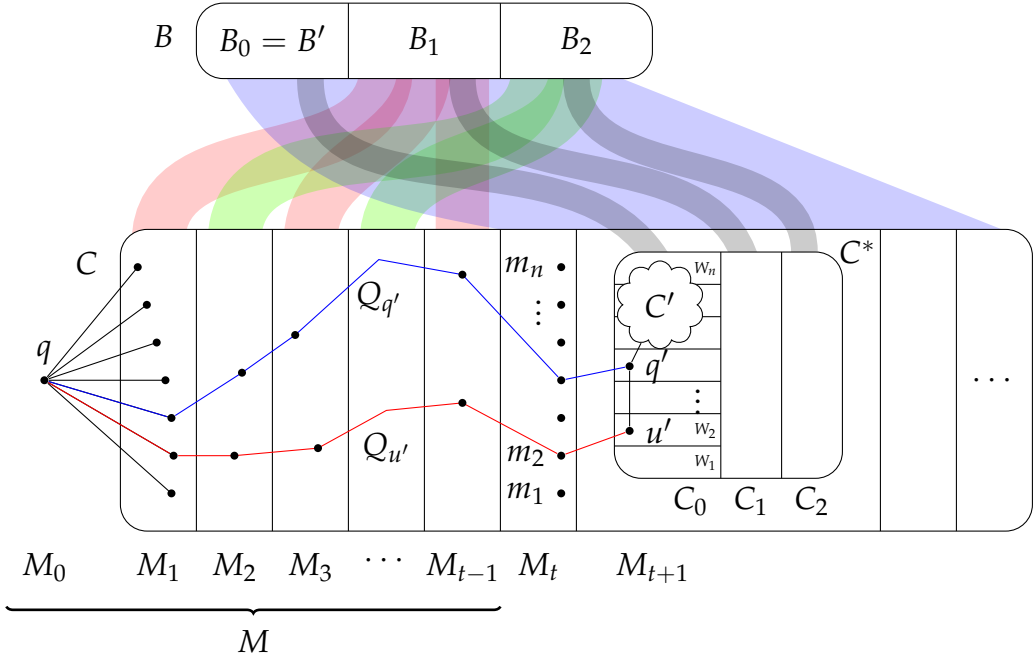
\begin{figure}
		\begin{center}
			\begin{tikzpicture}
				\draw [draw=red,line width=20pt,shorten >=-5pt,shorten <=-5pt, out=90,in=-90,opacity=.2] (0.5,4) to (3.5,6);
				\draw [draw=red,line width=20pt,shorten >=-5pt,shorten <=-5pt, out=90,in=-90,opacity=.2] (2.5,4) to (4,6);
				\draw [draw=red,line width=20pt,shorten >=-5pt,shorten <=-5pt, out=90,in=-90,opacity=.2] (4.5,4) to (4.5,6);
				\draw [draw=secondcolor,line width=20pt,shorten >=-5pt,shorten <=-5pt, out=90,in=-90,opacity=\secondopacity] (1.5,4) to (5.5,6);
				\draw [draw=secondcolor,line width=20pt,shorten >=-5pt,shorten <=-5pt, out=90,in=-90,opacity=\secondopacity] (3.5,4) to (6,6);
				\draw [draw=none,fill=blue,opacity=.2] (5,4) [out=170,in=-60] to (1.4,6)--(6.6,6)--(11.6,4)--cycle;

				\tikzstyle{v}=[circle, fill, inner sep=0pt, minimum size=3pt]
				\draw [fill=white,rounded corners=10pt] (1,6) rectangle (7,7);
				\node at (1,7) [label=below left:$B$]{};            
				\draw (3,6)--(3,7);
				\draw (5,6)--(5,7);
				\node at (2,6.5) {$B_0=B'$};
				\node at (4,6.5) {$B_1$};
				\node at (6,6.5) {$B_2$};
				\node [label=$q$,v] at (-1,2) (q) {};
				\draw [fill=white,rounded corners=10pt] (0,0) rectangle (12,4);
				\node at (0,4) [label=below left:$C$]{};
				\foreach \i in {1,2,3,4,5,6,10,11}
				{
					\draw (\i,0)--(\i,4);
				}
				\foreach \y in {1,2,3,4,5,6,7}{
					\pgfmathparse{cos(\y*25-50)}
					\let\xcoord\pgfmathresult
					\draw (q)    -- (.4+.3*\xcoord,0.5*\y) node (v\y)[v]{};
				}
				\foreach \y in {3,4,5,6}{
					\node at (5.8,0.5*\y) (m\y)[v] {};
				}
				\foreach \y in {1,2}{ 
					\node at (5.8,0.5*\y) (m\y)[v,label=left:$m_\y$] {};
				}
				\node at (5.8,0.5*7) (m7)[v,label=left:$m_n$] {};
				\node at (5.5,3) {$\vdots$};
				\node at (11.5,2) {$\cdots$};
				\node at (7,0)[label=below:$M_{t+1}$] {};
				\node at (-1,0)[label=below:$M_0$] {};
				\node at (0.5,0)[label=below:$M_1$] {};
				\node at (1.5,0)[label=below:$M_2$] {};
				\node at (2.5,0)[label=below:$M_3$] {};
				\node at (3.5,0)[label=below:$\cdots$] {};
				\node at (4.5,0)[label=below:$M_{t-1}$] {};
				\node at (5.5,0)[label=below:$M_{t}$] {};
				\draw [decorate,decoration={brace},line width=1pt] (5,-1)--(-1.5,-1) node[midway,label=below:$M$]{};
				
				\draw [line width=10pt, out=120,shorten <=-5pt,in=-90,opacity=.2] (7,3.7) to (2.5,6);
				\draw [line width=10pt, out=105,shorten <=-5pt,in=-90,opacity=.2] (8.2,3.7) to (4.5,6);
				\draw [line width=10pt, out=90,in=-90,opacity=.2] (9,3.7) to (6,6);
				\draw [rounded corners=10pt,fill=white] (6.5,.7)  rectangle (9.5,3.7);
				\draw (9.5-.8,.7)--(9.5-.8,3.7);
				\draw (9.5-.8*2,.7)--(9.5-.8*2,3.7);
				\foreach \i in {0,1,2}
				{
					\node at (6.5+0.8*\i+1,.4) {$C_\i$};
				}
				\node at (9.7,3.7) {$C^*$};
				
				\foreach \i in {1,2,3,4,5,6}
				{
					\draw (6.5, .7+3*\i/7) --(9.5-.8*2,.7+3*\i/7);
				}
				\foreach \i in {1,2}
				{
					\node at (7.7, .48+3*\i/7) {\tiny $W_\i$};
				}
				\node at (7.6, .54+3*3/7) {$\vdots$};
				\node at (7.7, .48+3) {\tiny $W_n$};
				\node at (6.7,.9+3/7) [v,label=right:$u'$] (u') {};
				\node at (6.7,.9+3*3/7) [v,label=right:$q'$] (q'){};
				
				\draw [draw=red] (q)--(v2)--(1.5,1) node[v]{} -- (2.6,1.1) node[v]{}--(3.5,1.6)node[label=below:$Q_{u'}$]{} --(4.5,1.7) node[v]{}--  (m2)--(u');
				\draw [draw=blue](q)--(v3)--(1.6,2.1) node[v]{} -- (2.3,2.6) node[v]{}--(3.4,3.6)node[label=below:$Q_{q'}$]{} --(4.5,3.4) node [v]{} -- (m4)--(q');
				\draw (u')--(q');
				\node[draw, fill=white, cloud, cloud puffs=10, minimum width=4pt, minimum height=4pt] at (7.1,3)(c') {$C'$};
				\draw (q')--(c');            
			\end{tikzpicture}
		\end{center}
		\caption{The situation in the proof of \cref{lem:ropeinduction} when $\chi(C_0)\ge c+3$.}\label{fig:part1}
	\end{figure}
	
	\begin{proof}
		For $i \ge 0$ let $M_i$ be the set of vertices in $C \cup \{q\}$ with $G[C \cup \{q\}]$-distance $i$ from $q$. Choose $t$ such that $\chi(M_{t+1}) \ge \lceil \chi(C)/2\rceil \ge 3c+9$. Since $3c+9 > 2$ and $\chi(M_0\cup M_1\cup M_2\cup M_3\cup M_4)\le 2$, it follows that $t \ge 4$.
		
		It follows that there exists some $C^*\subseteq M_{t+1}$ such that $G[C^*]$ is connected, the $G$-distance between $q$ and $C^*$ is at least $5$, and $\chi(C^*)\ge \chi(M_{t+1}) - \chi(N^{4}_G[q])\ge \chi(M_{t+1}) -2 \ge 3c+7$.
		Let $M=M_0 \cup \cdots \cup M_{t-1}$.
		Then, $G[M]$ is connected and anticomplete to $C^*$.
		
		Next we partition $B$ into three sets.
		Let $B_0$ be the set of vertices in $B$ with no neighbour in $M$.
		Let $B_1$ be the set of vertices $v$ in $B$ with a neighbour in $M$ such that the $G[M \cup \{v\}]$-distance between $q$ and $v$ is odd, and let $B_2$ be the set where this distance is even.
		For $i = 0,1,2$, let $C_i$ be the set of vertices in $C^*$ with a neighbour in $B_i$.
		Then, $C^*=C_0\cup C_1 \cup C_2$ and note that $B_i$ covers $C_i$ for each $i=0,1,2$.
		We must consider the cases that either $G[C_0]$ or $G[C_1]$ or $G[C_2]$ has large chromatic number.
		First we handle when $G[C_0]$ has large chromatic number.
		
		Suppose that $\chi(C_0)\ge c+3$.
		See \cref{fig:part1} for an illustration of the situation.
		Let $B'=B_0$.
		Take an ordering $m_1, \ldots , m_n$ of $M_t$, and for $1 \le i \le n$, let $W_i$ be the set of vertices $v \in C_0$ such that $v$ is adjacent to $m_i$ and nonadjacent to $m_1, \ldots, m_{i-1}$. Then $(W_1, \ldots, W_n)$ is a stable grading of $C_0$ since $G$ is triangle-free.
		
		By \cref{lem:earlier2} there exists a subset $C'$ of $C_0$ and an edge $u'q'$ of $G[C_0]$ with the following properties:
		\begin{itemize}
			\item $G[C'\cup \{q'\}]$ is connected,
			\item $\chi(C') \ge c$.
			\item $u'$ and $q'$ are earlier than every vertex in $C'$, and
			\item $u'$ has no neighbour in $C'$.
		\end{itemize}
		Let $Q_{u'}$ be an induced path in $G[M\cup M_{t} \cup \{u'\}]$ between~$u'$ and~$q$ of length~$t+1$ and similarly let $Q_{q'}$ be an induced path in $G[M\cup M_{t} \cup \{q'\}]$ between $q'$ and~$q$ of length~$t+1$.
		Observe that $Q_{u'}\cup \{q'\}$ is an induced path of length $t+2$ between~$q'$ and~$q$ since $G$ is triangle-free and there are no edges between $M$ and $M_{t+1}$.
		Of these two paths between~$q$ and $q'$, let $Q_0$ be the one of even length and let $Q_1$ be the one of odd length. This now provides the desired subsets $B'\subseteq B$, $C'\subseteq C$, vertex $q'\in C \setminus C'$, and paths $Q_0$, $Q_1$, so we may assume that $\chi(C_0) < c+3$.

		\begin{figure}
			\begin{center}
				\begin{tikzpicture}
					\draw [draw=red,line width=20pt,shorten >=-5pt,shorten <=-5pt, out=90,in=-90,opacity=.2] (0.5,4) to (3.5,6);
					\draw [draw=red,line width=20pt,shorten >=-5pt,shorten <=-5pt, out=90,in=-90,opacity=.2] (2.5,4) to (4,6);
					\draw [draw=red,line width=20pt,shorten >=-5pt,shorten <=-5pt, out=90,in=-90,opacity=.2] (4.5,4) to (4.5,6);
					\draw [draw=secondcolor,line width=20pt,shorten >=-5pt,shorten <=-5pt, out=90,in=-90,opacity=\secondopacity] (1.5,4) to (5.5,6);
					\draw [draw=secondcolor,line width=20pt,shorten >=-5pt,shorten <=-5pt, out=90,in=-90,opacity=\secondopacity] (3.5,4) to (6,6);
					\draw [draw=none,fill=blue,opacity=.2] (5,4) [out=170,in=-60] to (1.4,6)--(6.6,6)--(11.6,4)--cycle;

					\tikzstyle{v}=[circle, fill, inner sep=0pt, minimum size=3pt]
					\draw [fill=white,rounded corners=10pt] (1,6) rectangle (7,7);
					\node at (1,7) [label=below left:$B$]{};            
					\draw (3,6)--(3,7);
					\draw (5,6)--(5,7);
					\node at (2,6.5) {$B_0$};
					\node at (4,6.5) {$B_1$};
					\node at (6,6.5) {$B_2$};
					\node [label=left:{$q=m_1$},v] at (-1,2) (q) {};
					\draw [fill=white,rounded corners=10pt] (0,0) rectangle (12,4);
					\node at (0,4) [label=below left:$C$]{};
					\foreach \i in {1,2,3,4,5,6,10,11}
					{
						\draw (\i,0)--(\i,4);
					}
					\foreach \y in {1,2,3,4,5,6,7}{
						\draw (q)    -- (.3,0.5*\y) node (v\y)[v]{};
						\node at (1.5,0.5*\y) (w\y)[v]{};
						\node at (2.3,0.5*\y) (ww\y)[v]{};
						\node at (3.5,0.5*\y) [v]{};
						\node at (4.5,0.5*\y) (z\y)[v]{};
						
					}
					\node at (v1) [label=below:\tiny $m_2$,label distance=0pt,inner sep=0pt]{};
					\node at (v2) [label=below:\tiny $m_3$,label distance=0pt,inner sep=0pt]{};
					\node at (w1) [label=below:\tiny $m_{\abs{M_1}+2}$,label distance=0pt,inner sep=0pt]{};
					\node at (z7) [label=above:\tiny $m_{n}$,label distance=0pt,inner sep=0pt]{};
					
					\node at (11.5,2) {$\cdots$};
					\node at (7,0)[label=below:$M_{t+1}$] {};
					\node at (-1,0)[label=below:$M_0$] {};
					\node at (0.5,0)[label=below:$M_1$] {};
					\node at (1.5,0)[label=below:$M_2$] {};
					\node at (2.5,0)[label=below:$M_3$] {};
					\node at (3.5,0)[label=below:$\cdots$] {};
					\node at (4.5,0)[label=below:$M_{t-1}$] {};
					\node at (5.5,0)[label=below:$M_{t}$] {};
					\draw [decorate,decoration={brace},line width=1pt] (5,-1)--(-1.5,-1) node[midway,label=below:$M$]{};
					
					\draw [line width=10pt, out=120,shorten <=-5pt,in=-90,opacity=.2] (7,3.7) to (2.5,6);
					\draw [line width=10pt, out=105,shorten <=-5pt,in=-90,opacity=.2] (8.2,3.7) to (4.5,6);
					\draw [line width=10pt, out=90,in=-90,opacity=.2] (9,3.7) to (6,6);
					\draw [rounded corners=10pt,fill=white] (6.5,.7)  rectangle (9.5,3.7);
					\draw (9.5-.8,.7)--(9.5-.8,3.7);
					\draw (6.5+.8,.7)--(6.5+.8,3.7);
					\node at (6.5+.4,.4) {$C_0$};
					\node at (8,.4) {$C_1$};
					\node at (6.5+0.8*2+1,.4) {$C_2$};
					\node at (9.7,3.7) {$C^*$};
					
					\foreach \i in {1,2,3,4,5,6}
					{
						\draw (6.5+0.8, .7+3*\i/7) --(9.5-.8*2+0.8,.7+3*\i/7);
					}
					\foreach \i in {1,2}
					{
						\node at (7.5, .48+3*\i/7) {\tiny $W_\i$};
					}
					\node at (7.6, .54+3*3/7) {$\vdots$};
					\node at (7.5, .48+3) {\tiny $W_n$};
					\node at (6.7+1.4,.9+3/7) [v,label=right:$u'$] (u') {};
					\node at (6.7+1.4,.9+3*3/7) [v,label=right:$q'$] (q'){};
					
					\draw [draw=red] (u')--(q');
					\node[draw, fill=white, cloud, cloud puffs=10, minimum width=4pt, minimum height=4pt] at (7.1+1,3)(c') {$C'$};
					\draw (q')--(c');
					\draw [draw=red](u') [out=140,in=-60] to 
					node[pos=.45,sloped,above] {\tiny $Q_{2-h}=Q_1$}
					(3.3,6.2) node[v,label=$b_{u'}$]{}--(v1) node [label=right:{\tiny $m_{i_{u'}}$},label distance=0pt,inner sep=0pt]{}--(q);
					\draw [draw=blue] (q') [out=140,in=-70] to 
					node[pos=.6,sloped,above] {\tiny $Q_{h-1}=Q_0$}(4.5,6.2)
					node[v,label=$b_{q'}$]{}--(ww3) node [label=right:{\tiny $m_{i_{q'}}$},label distance=0pt,inner sep=0pt]{}--(w4)--(v6)--(q) ;
				\end{tikzpicture}
			\end{center}
			\caption{The situation in the proof of \cref{lem:ropeinduction} when $\chi(C_0)<c+3$ and $\chi(C_1)\ge c+3$.}\label{fig:part2}
		\end{figure}
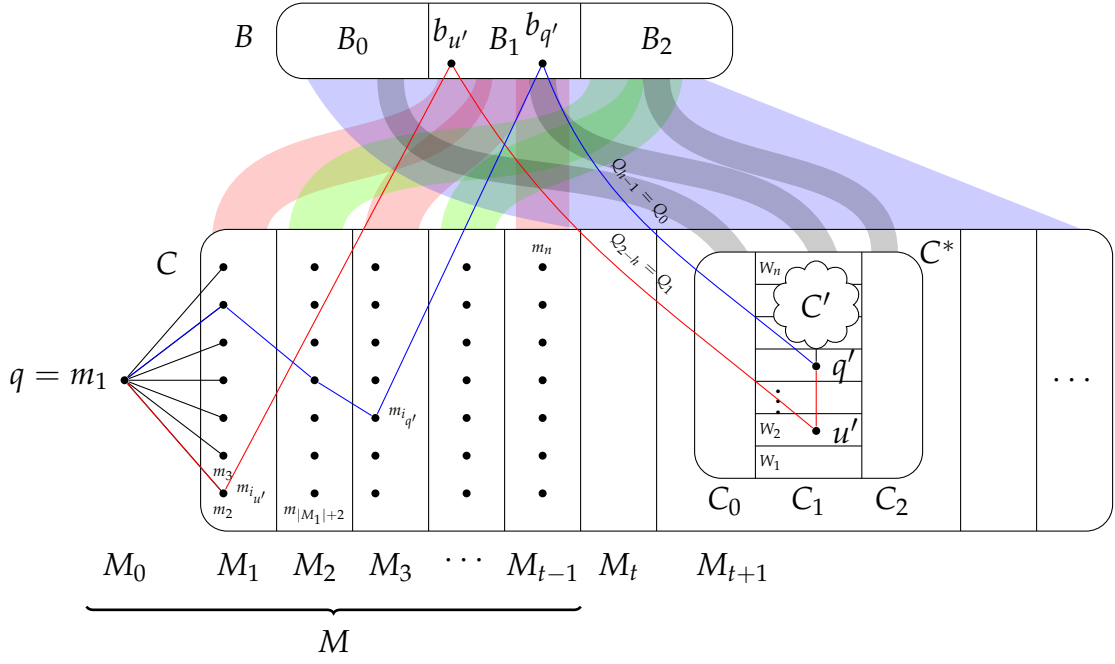
		
		Since $\chi(C_0) < c + 3$, it follows that for some $h\in \{1,2\}$, we have that $\chi(C_h) \ge c+3$. See \cref{fig:part2} for an illustration of the situation.
		Take an ordering $m_1, \ldots , m_n$ of $M$ so that for $1\le i < j\le n$, we have that the $G[M]$-distance between $q$ and $m_i$ is at most the $G[M]$-distance between $q$ and $m_j$. For $1 \le i \le n$ let $W_i$ be the set of vertices $v \in C_h$ such that $v$ has a neighbour in $B_h$ adjacent to $m_i$ and no neighbour in $B_h$ adjacent to one of $m_1, \ldots , m_{i-1}$.
		Then $(W_1, \ldots , W_n)$ is a stable grading of $C_h$ since $G$ has odd girth at least~$7$ and $W_i\subseteq N_G^2(m_i)$ for each $1\le i \le n$. 
		
		By \cref{lem:earlier2}, there exists a subset $C'$ of $C_h$ and an edge $u'q'$ of $G[C_h]$ with the following properties:
		\begin{itemize}
			\item $G[C'\cup \{q'\}]$ is connected,
			\item $\chi(C') \ge c$.
			\item $u'$ and $q'$ are earlier than every vertex in $C'$, and
			\item $u'$ has no neighbour in $C'$.
		\end{itemize}
		Let $i_{u'}$ be such that $u'\in W_{i_{u'}}$ and similarly, let $i_{q'}$ be such that $q'\in W_{i_{q'}}$.
		Let $k=\max\{i_{u'}, i_{q'}\}$.
		Let $B'=B_h\setminus N(\{m_1, \ldots , m_k\})$.
		Since $u'$ and $q'$ are earlier than each vertex of $C'$, it follows that $B'$ covers $C'$.
		Let $b_{u'}$ be a vertex of $B_h\setminus B'$ adjacent to both $m_{i_{u'}}$ and $u'$, and similarly, let $b_{q'}$ be a vertex of $B_h\setminus B'$ adjacent to both $m_{i_{q'}}$ and $q'$.
		Since $G$ is triangle-free, $b_{u'}$ is non-adjacent to $q'$.
		Since the $G$-distance between $q$ and $C^*$ is at least $5$, neither $b_{u'}$ nor $b_{q'}$ is in $N^3(q)$.
		Let $Q_{2-h}$ be an induced path in $G[M\cup \{b_{u'}, u', q'\}]$ between $q$ and $q'$
		obtained by extending a shortest path from $q$ to $b_{u'}$ in $G[M\cup\{b_{u'}\}]$ by two edges $b_{u'}u'$ and $u'q'$.
		Then, $Q_{2-h}$ has even length if $2-h=0$ and odd length if $2-h=1$.
		Similarly, let $Q_{h-1}$ be an induced path in $G[M\cup \{b_{q'}, q'\}]$ between $q$ and~$q'$
		obtained by extending a shortest path from $q$ to $b_{q'}$ in $G[M\cup\{b_{q'}\}]$ by an edge $b_{q'}q'$.
		Then, $Q_{h-1}$ has even length if $h-1=0$ and odd length if $h-1=1$.
		This now provides the desired subsets $B'\subseteq B$, $C'\subseteq C$, a vertex $q'\in C \setminus C'$, and induced paths $Q_0$, $Q_1$.
	\end{proof}
	
	A \emph{broken $r$-arithmetic rope} is a graph consisting of paths $Q_{1,1}, Q_{1,2}, \ldots , Q_{r,1}, Q_{r,2}$ with ends contained in a vertex set $\{q_1,\ldots , q_{r+1}\}$ so that 
	\begin{itemize}
		\item for every $1\le i \le r$, $Q_{i,1}$ is an odd-length path between $q_i$ and $q_{i+1}$,
		\item for every $1\le i \le r$, $Q_{i,2}$ is an even-length path between $q_i$ and $q_{i+1}$,
		\item for every $(h_1, \ldots , h_r) \in \{1,2\}^r$, $Q_{1,h_1} \cup Q_{2,h_2} \cup \cdots \cup  Q_{r,h_r}$ is an induced path, and
		\item the vertices $q_1,\ldots , q_{r+1}$ are pairwise at $G$-distance at least $5$.
	\end{itemize}
	We denote such a broken $r$-arithmetic rope by $(q_i,Q_{i,1},Q_{i,2})_{i=1}^r$ and say that it has \emph{end}~$q_{r+1}$.
	By applying \cref{lem:ropeinduction} a total of $r$ times, we can find a broken $r$-arithmetic rope.
	
	\begin{lemma}\label{lem:broken}
		Let $r\ge 1$, 
		let $G$ be a graph with odd girth at least $11$, and let $B,C\subseteq V(G)$ be disjoint vertex subsets with $B$ covering $C$. Let $q_1\in V(G)\setminus C$ be a vertex such that $G[C\cup \{q_1\}]$ is connected and $\chi(G[C]) \ge 6^rc+\frac{17}{5}(6^r-1)$.
		Then there exists $B'\subseteq B$, $C'\subseteq C$, $q_2,q_3,\ldots,q_{r+1}\in C\setminus C'$ and a broken $r$-arithmetic rope $(q_i,Q_{i,1},Q_{i,2})_{i=1}^r$ 
		with end $q_{r+1}$
		such that
		\begin{itemize}
			\item $G[C'\cup \{q_{r+1}\}]$ is connected,
			\item $\chi(C') \ge c$,
			\item $B'$ covers $C'$,
			\item $B'$ is anticomplete to $(V(Q_{i,1}) \cup V(Q_{i,2}) )   \setminus N^2[q_{i+1}]$ for all $i=1,2,\ldots,r$,
			\item $(V(Q_{i,1}) \cup V(Q_{i,2}) )  \setminus N^2[q_{i+1}]\subseteq C\cup\{q_1\}$ for all $i=1,2,\ldots,r$,
			\item $C'$ is anticomplete to $\left(\bigcup_{i=1}^r (V(Q_{i,1}) \cup V(Q_{i,2}) ) \right)  \setminus \{q_{r+1}\}$, and
			\item the $G$-distance between $\{q_1, \ldots , q_r\}$ and $C'\cup \{q_{r+1}\}$ is at least $5$.
		\end{itemize}
	\end{lemma}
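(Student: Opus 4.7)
The plan is to induct on $r$.

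For the base case $r = 1$, the hypothesis reads $\chi(C) \geq 6c + 17$, which matches the hypothesis of \cref{lem:ropeinduction} exactly. A single application yields $(B', C', q_2, Q_0, Q_1)$; relabelling $Q_{1,1} := Q_1$ (odd) and $Q_{1,2} := Q_0$ (even) makes every required conclusion of \cref{lem:broken} match verbatim. In particular, $(V(Q_{1,1}) \cup V(Q_{1,2})) \setminus N^2[q_2] \subseteq C \cup \{q_1\}$ follows because every vertex of $V(Q_{1,1}) \cup V(Q_{1,2})$ outside $C \cup \{q_1\}$ already lies in $N^2[q_2]$.

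For the inductive step $r \geq 2$, I would set $c' := 6^{r-1}c + \tfrac{17}{5}(6^{r-1} - 1)$ and invoke the clean identity
\[
6 \bigl( 6^{r-1} c + \tfrac{17}{5}(6^{r-1} - 1) \bigr) + 17 = 6^{r} c + \tfrac{17}{5}(6^{r} - 1)
\]
to conclude $\chi(C) \geq 6c'+17$. Applying \cref{lem:ropeinduction} with parameter $c'$ produces $B_1 \subseteq B$, $C_1 \subseteq C$, $q_2 \in C \setminus C_1$, and induced paths $Q_{1,1}$ (odd) and $Q_{1,2}$ (even) with $\chi(C_1) \geq c'$; the triple $(B_1, C_1, q_2)$ then satisfies the hypotheses of \cref{lem:broken} at level $r-1$, and the induction hypothesis supplies $B' \subseteq B_1$, $C' \subseteq C_1$, vertices $q_3, \ldots, q_{r+1} \in C_1 \setminus C'$, and a broken $(r-1)$-arithmetic rope $(q_i, Q_{i,1}, Q_{i,2})_{i=2}^{r}$ with end $q_{r+1}$. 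I then claim that $(q_i, Q_{i,1}, Q_{i,2})_{i=1}^{r}$ is the desired broken $r$-arithmetic rope.

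The seven explicitly listed bullets of \cref{lem:broken} (connectivity, chromatic bound, covering, anticompleteness of $B'$, inclusion in $C \cup \{q_1\}$, anticompleteness of $C'$, and the $G$-distance bound) all follow routinely by combining the corresponding clauses of \cref{lem:ropeinduction} for the $i=1$ block with the induction hypothesis for $i \geq 2$. The facts used are that $C_1 \cup \{q_2\} \subseteq C$, that $C_1$ is anticomplete to $(V(Q_{1,1}) \cup V(Q_{1,2})) \setminus \{q_2\}$, that $B_1$ is anticomplete to $(V(Q_{1,1}) \cup V(Q_{1,2})) \setminus N^2[q_2]$, and that $q_1$ sits at $G$-distance at least $5$ from $C_1 \cup \{q_2\}$ (so, in particular, from $C' \cup \{q_{r+1}\} \subseteq C_1$). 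The pairwise $G$-distance condition $d_G(q_i, q_j) \geq 5$ for $i \neq j$ is then immediate.

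The step I expect to be the main obstacle is verifying the broken-arithmetic-rope clause itself: for every $(h_1,\ldots,h_r) \in \{1,2\}^r$, the union $Q_{1,h_1} \cup \cdots \cup Q_{r,h_r}$ must be an induced path. Writing $\hat P := Q_{2,h_2} \cup \cdots \cup Q_{r,h_r}$, which is induced by induction, this reduces to $V(Q_{1,h_1}) \cap V(\hat P) = \{q_2\}$ together with the absence of edges across. Using the structural inclusion $V(\hat P) \subseteq C_1 \cup \{q_2\} \cup B_1$ carried by the inductive construction, the $C_1$-side is disposed of by the anticompleteness of $C_1$ to $V(Q_{1,h_1}) \setminus \{q_2\}$, and the $B_1$-side by the anticompleteness of $B_1$ to $V(Q_{1,h_1}) \setminus N^2[q_2]$ whenever the corresponding vertex of $Q_{1,h_1}$ lies outside $N^2[q_2]$. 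What remains is a putative edge $vw$ with $v \in V(Q_{1,h_1}) \cap N^2[q_2] \setminus \{q_2\}$ and $w \in B_1 \cap V(Q_{i,h_i})$ for some $i \geq 2$; this most delicate case is to be ruled out by chasing the $\setminus N^3$-conditions carried through successive applications of \cref{lem:ropeinduction}, together with $d_G(q_2, q_{i+1}) \geq 5$ and $w \in N^2[q_{i+1}]$, and by a case analysis on the few positions $v$ can occupy inside $N^2[q_2]$ on the paths produced by \cref{lem:ropeinduction}, forcing $d_G(q_2, w) \geq 4$ and hence $d_G(v,w) \geq 2$, contradicting adjacency.
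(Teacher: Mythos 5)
Your proposal follows the paper's high-level plan (induct on $r$, combine \cref{lem:ropeinduction} with the induction hypothesis, and check the chromatic-number arithmetic), but it reverses the direction of the recursion, and that reversal is exactly where the argument breaks. The paper invokes the induction hypothesis \emph{first} to produce the initial $r-1$ blocks together with a residual set $C_{r-1}$, and only then applies \cref{lem:ropeinduction} once more to peel off the \emph{last} block $Q_{r,\ast}$ inside $(C_{r-1}\setminus C_r)\cup (B_{r-1}\setminus B_r)\cup\{q_r\}$. The reason this order is chosen is that the inherited conclusion ``the $G$-distance between $\{q_1,\ldots,q_{r-1}\}$ and $C_{r-1}\cup\{q_r\}$ is at least $5$'' says precisely that the space where the \emph{new} block is built is far from all the \emph{old} anchors. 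A problematic chord from $x\in V(Q_{i,h_i})\cap N^2[q_{i+1}]$ (with $i\le r-1$) to $y\in B_{r-1}\cap V(Q_{r,h_r})$ is then killed either by $y\notin N^3(q_r)$ (when $i=r-1$), or by observing that $y$ has a $C_{r-1}$-neighbour on $Q_{r,h_r}$, which would put $q_{i+1}$ at $G$-distance at most $4$ from $C_{r-1}$, contradicting the inherited distance bound (when $i<r-1$).

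In your reversed direction the analogous separation fails. After applying \cref{lem:ropeinduction} at $q_1$ you obtain $C_1$, $B_1$, $q_2$ and the first block $Q_{1,\ast}$, and the remaining rope is built inside $C_1\cup B_1\cup\{q_2\}$. The only distance guarantee you inherit is that $q_1$ is far from $C_1\cup\{q_2\}$; the shared vertex $q_2$, where a chord would have to attach, is \emph{adjacent} to $C_1$ (indeed $G[C_1\cup\{q_2\}]$ is connected by construction). Consequently the mirror-image of the paper's contradiction is unavailable: a chord $vw$ with $v\in V(Q_{1,h_1})\cap N^2[q_2]\setminus\{q_2\}$, $w\in B_1\cap V(Q_{i,h_i})$ and $w\in N^2[q_{i+1}]$ only forces $d_G(q_2,w)\ge d_G(q_2,q_{i+1})-d_G(q_{i+1},w)\ge 5-2=3$, not $\ge 4$ as you claim; and $d_G(q_2,w)=3$ together with $d_G(q_2,v)=2$ is perfectly compatible with $vw$ being an edge (both $v$ and $w$ sitting on a shortest $q_2$--$q_{i+1}$ path of length exactly $5$). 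The $\setminus N^3(q_1)$ condition on the new block does not help either, because the vertices $v$ in question are near $q_2$, not near $q_1$; and the $\setminus N^3(q_i)$ conditions for $i\ge 2$ are not even part of the induction hypothesis as stated in \cref{lem:broken}, so they cannot be used without strengthening the statement. The arithmetic, the base case, and the treatment of $w\in C_1$ are all fine, but the crucial ``induced path'' verification is left with a genuine gap that cannot be closed by the distance chase you outline; you need to run the induction in the other order.
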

	\begin{proof}
		We proceed by the induction on $r$. The base case $r=1$ follows from \cref{lem:ropeinduction}. So, let us assume that $r>1$ and
		that the lemma holds for $r-1$.
		Thus there exists $B_{r-1}\subseteq B$, $C_{r-1}\subseteq C$, $q_2,q_3,\ldots,q_{r}\in C\setminus C_{r-1}$, and a broken $(r-1)$-arithmetic rope $(q_i,Q_{i,1},Q_{i,2})_{i=1}^{r-1}$ satisfying the above properties with $c':=6c+17$.
		We remark that $\chi(C_{r-1})\ge 6c+17$.
		By applying \cref{lem:ropeinduction} with $B_{r-1}$, $C_{r-1}$, and $q_{r}$, 
		we deduce that there exists $B':=B_{r}\subseteq B_{r-1}$, $C':=C_{r}\subseteq C_{r-1}$, $q_{r+1}\in C_{r-1}\setminus C_{r}$, and two induced paths $Q_{r,1}$, $Q_{r,2}$ between $q_{r}$ and $q_{r+1}$ in $G[(C_{r-1} \setminus C_{r}) \cup (B_{r-1} \setminus B_{r}) \cup \{q_{r}\}]$ satisfying the properties of \cref{lem:ropeinduction}.
		It is easy to check that the above seven properties hold.   
		
		It remains to check that $(q_i,Q_{i,1},Q_{i,2})_{i=1}^r$ is indeed a broken $r$-arithmetic rope.
		The first, second, and last conditions are immediate.
		To see the third condition, suppose that there is $(h_1, \ldots , h_r) \in \{1,2\}^r$ such that $Q_{1,h_1} \cup Q_{2,h_2} \cup \cdots \cup  Q_{r,h_r}$ is not an induced path.
		By the induction hypothesis, $Q_{1,h_1} \cup Q_{2,h_2} \cup \cdots \cup  Q_{r-1,h_{r-1}}$ is an induced path. Thus, there is an edge joining a vertex $x\in V(Q_{a,h_i}) \setminus\{q_{r}\}$ and a vertex $y\in V(Q_{b,h_r}) \setminus \{q_{r}\}$ for some $i\in\{1,2,\ldots,r-1\}$ and $a,b\in \{0,1\}$.
		Since $C_{r-1}$ is anticomplete to $(V(Q_{i,1})\cup V(Q_{i,2})) \setminus\{q_r\}$, it follows that $y\in (B_{r-1}\cap N^2(q_{r+1}))\setminus N^3(q_r)$. 
		Since $B_{r-1}$ is anticomplete to $(V(Q_{i,1})\cup V(Q_{i,2})) \setminus N^2[q_{i+1}]$, it follows that $x\in N^2[q_{i+1}]$.
		Then we find a path of length at most $3$ from $q_{i+1}$ to $y$.
		Note that
		$y$ has a neighbour in $C_{r-1}$ on $Q_{b,h_r}$
		because $V(Q_{b,h_r})\cap B_{r-1}\subseteq N^2(q_{r+1})$.
		This implies that if $i<r-1$, then we obtain a path of length~$4$ from $q_{i+1}$ to $C_{r-1}$, 
		contradicting the induction hypothesis that the $G$-distance between $\{q_1, \ldots , q_{r-1}\}$ and $C_{r-1}$ is at least~$5$.
		Thus, $i=r-1$
		and the $G$-distance from $q_{r}$ to $y$ is at most $3$, a contradiction.
	\end{proof}

	To prove \cref{thm:ropefinding} we shall now find a certain broken $r$-arithmetic rope and create an $r$-arithmetic rope from it by fixing it with an additional path between $q_{r+1}$ and~$q_1$ (and extending $Q_{r,1}$, $Q_{r,2}$ along this path).
	
	\begin{proof}[Proof of \cref{thm:ropefinding}]
		We may assume that $G[X]$ is connected since we can always restrict to a connected component with maximum chromatic number.
		Choose some vertex in $X$, and for $i \ge 0$, let $L_i$ be the set of vertices with $G[X]$-distance $i$ from the vertex. There exists $s$ such that $\chi(L_{s+1}) \ge \lceil \chi(G)/2\rceil \ge 6^r \cdot 3 + \frac{17}{5}(6^r-1)$. Since $G$ has odd girth at least $11$, $\chi(L_0 \cup \cdots \cup L_{4}) \le 2$, so it follows that $s \ge 4$. Let $C$ be the vertex set of a connected component of $G[L_{s+1}]$ with maximum chromatic number.
		Let $q_1$ be a vertex of $L_s$ with a neighbour in $C$, and let $B=L_s$.
		
		By \cref{lem:broken}, there exists $B'\subseteq B$, $C'\subseteq C$, $q_2,q_3,\ldots,q_{r+1}\in C\setminus C'$ and a broken $r$-arithmetic rope $(q_i,Q_{i,1},Q_{i,2})_{i=1}^r$ with end $q_{r+1}$ such that
		\begin{itemize}
			\item $G[C'\cup \{q_{r+1}\}]$ is connected,
			\item $\chi(G[C']) \ge 3$,
			\item $B'$ covers $C'$,
			\item $B'$ is anticomplete to $(V(Q_{i,1}) \cup V(Q_{i,2}) )   \setminus N^2[q_{i+1}]$ for all $i=1,2,\ldots,r$,
			\item $(V(Q_{i,1}) \cup V(Q_{i,2}) )  \setminus N^2[q_{i+1}]\subseteq C\cup\{q_1\}$ for all $i=1,2,\ldots,r$,
			\item $C'$ is anticomplete to $\left(\bigcup_{i=1}^r (V(Q_{i,1}) \cup V(Q_{i,2}) ) \right)  \setminus \{q_{r+1}\}$, and
			\item the $G$-distance between $\{q_1, \ldots , q_r\}$ and $C'\cup \{q_{r+1}\}$ is at least $5$.
		\end{itemize}
		Since $\chi(G[C']) \ge 3$, there exists a vertex $x\in C'$ with $G$-distance at least $5$ from $q_{r+1}$ and also from $q_1, \ldots , q_r$ since $x\in C'$.
		Let $b\in B'$ be a vertex adjacent to $x$.
		Let $P_1$ be a shortest induced path between $q_{r+1}$ and $b$
		in $G[\{q_{r+1},b\}\cup C']$.
		Since the $G$-distance between~$q_{r+1}$ and $x$ is at least $5$, the length of $P_1$ is at least $4$.
		
		Let $a_1\in L_{s-1}$ be a vertex adjacent to $q_1$ and let $a_2\in L_{s-1}$ be a vertex adjacent to~$b$.
		Observe that the vertices $q_1, \ldots , q_{r+1}, x$ are pairwise at $G$-distance at least $5$, $B'$ is anticomplete to $(V(Q_{i,1}) \cup V(Q_{i,2}) ) \setminus N^2[q_{i+1}]$ for every $i=1,2,\ldots,r$, and
		$(V(Q_{i,1}) \cup V(Q_{i,2}) )  \setminus N^2[q_{i+1}]\subseteq C\cup\{q_1\}$ for all $i=1,2,\ldots,r$.
		Therefore the only neighbour of~$a_1$ in $V(P_1)\cup \{a_2\} \cup \bigcup_{i=1}^r (V(Q_{i,1}) \cup V(Q_{i,2}) )$ is $q_1$.
		Furthermore, since 
		the $G$-distance between $x$ and $q_i$ is at least $5$ for every $i=1,2,\ldots,{r+1}$ and 
		$C'$ is anticomplete to $\left(\bigcup_{i=1}^r (V(Q_{i,1}) \cup V(Q_{i,2}) ) \right)  \setminus \{q_{r+1}\}$, it follows that $P_1ba_2$ is an induced path and $V(P_1)\cup \{a_2\}\setminus\{q_{r+1}\}$ is anticomplete to $\bigcup_{i=1}^r (V(Q_{i,1}) \cup V(Q_{i,2}) ) \setminus \{q_{r+1}\}$.
		
		Let $P_2$ be an induced path between $a_1$ and $a_2$ in $G[\{a_1,a_2\} \cup \bigcup_{i=0}^{s-2} L_i]$.
		Clearly $V(P_2)\setminus \{a_1,a_2\}$ is anticomplete to $V(P_1)\cup \bigcup_{i=1}^r (V(Q_{i,1}) \cup V(Q_{i,2}) )$.
		Let $P$ be the induced path $P_1ba_2P_2a_1q_1$ between $q_{r+1}$ and $q_1$.
		Then we form an $r$-arithmetic rope from the broken $r$-arithmetic rope $(q_i,Q_{i,1},Q_{i,2})_{i=1}^r$ with end $q_{r+1}$ by extending $Q_{r,1}$, $Q_{r,2}$ along~$P$,  possibly swapping their labels depending on the parity of the length of $P$.
	\end{proof}
	
	We actually only need \cref{thm:ropefinding} in the $r=5$ case, in which case we have the following bound.
	
	\begin{theorem}
		\label{thm:5ropefinding}
		Every graph $G$ with odd girth at least $11$ and $X\subseteq V(G)$ with $\chi(X) \ge 99525$ contains a $5$-arithmetic rope in $X$.
	\end{theorem}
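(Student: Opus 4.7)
The statement is an immediate specialization of \cref{thm:ropefinding} to the case $r = 5$, so my plan is simply to substitute $r = 5$ into the general bound and verify that the resulting numerical threshold equals $99525$. No new structural argument is required beyond what has already been established.

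Concretely, applying \cref{thm:ropefinding} with $r = 5$ guarantees that every graph $G$ with odd girth at least $11$ containing a subset $X \subseteq V(G)$ with
\[
\chi(X) \;\ge\; 6^{6} + \tfrac{34}{5}\bigl(6^{5} - 1\bigr) - 1
\]
contains a $5$-arithmetic rope in $X$. It therefore suffices to compute the right-hand side. The key intermediate values are $6^{5} = 7776$ and $6^{6} = 46656$, so $6^{5} - 1 = 7775$ and $\tfrac{34}{5}\cdot 7775 = 34 \cdot 1555 = 52870$. Adding these gives $46656 + 52870 - 1 = 99525$, matching the hypothesis of the statement.

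Since the only content of the theorem is this arithmetic reduction, there is no real obstacle; the one thing to be careful about is making sure the factor of $\tfrac{34}{5}$ produces an integer when multiplied by $6^{5} - 1 = 7775$, which it does because $7775$ is divisible by $5$. The proof I would write is just two or three lines invoking \cref{thm:ropefinding} and recording the computation above.
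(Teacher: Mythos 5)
Your proof is correct and matches the paper, which also treats this theorem as an immediate numerical specialization of Theorem~\ref{thm:ropefinding} to $r=5$ and provides no separate argument. The arithmetic checks out: $6^{6}+\tfrac{34}{5}(6^{5}-1)-1=46656+52870-1=99525$.
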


	Let us remark that with more technical arguments, one can replace the odd girth condition in \cref{thm:ropefinding} with the conditions that $\omega(G)\le \omega$, every induced subgraph~$H$ of $G$ with $\omega(H)<\omega$ has chromatic number at most~$\tau$, and $\chi^{4}(G)=\max_{v\in V(G)}\{\chi(N^{4}[v])\}$ is bounded,  giving a much worse bound for the chromatic number that now depends on $\omega$, $\tau$, $\chi^{4}(G)$, as well as $r$.
	One can even alter the definition of arithmetic ropes to say that the length of the paths $Q_{i,1}, Q_{i,2}$ differ by $1 \pmod{m}$ rather than just $1 \pmod{2}$.
	In particular, such arithmetic ropes can then be made to contain an induced cycle of length $\ell \pmod{m}$.
	This yields another proof of the key step of the proof of a theorem of Scott and Seymour \cite{scott2019induced} that the class of graphs without an induced cycle of length $\ell \pmod{m}$ is $\chi$-bounded.

	\section{Odd wheel t-minors} 
	\label{sec:oddwheel}

	This section is dedicated to proving \Cref{thm:technical-main} (which then implies \cref{thm:main} and thus \cref{thm:hperfect}).
	Our main tool is \cref{thm:5ropefinding}, which was proven in the last section.
	
	We begin with some lemmas showing that certain graph structures contain odd wheels as t-minors.
	
	\begin{lemma}\label{lem:oddwheel}
		Let $G$ be a graph consisting of an induced odd cycle $C$ and a single additional vertex $v$ adjacent to at least three vertices $a$, $b$, $c$ that appear on $C$ and that partition $C$ into three odd-length paths.
		Then $G$ contains an odd wheel as a t-minor.
	\end{lemma}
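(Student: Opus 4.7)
The plan is to construct an odd wheel as a t-minor of $G$ by iteratively t-contracting every degree-$2$ vertex of the current graph. Let $N := N_G(v) \cap V(C)$, which contains $\{a,b,c\}$. At every stage, each current vertex is a \emph{class} of merged original vertices forming a contiguous arc of $V(C)$; the current graph is the current cycle (with edges arising from original cycle edges between consecutive classes) together with $v$ adjacent to exactly those classes that contain an element of $N$. In particular no chord ever appears, since the only edges of $G$ are cycle edges of $C$ and $v$-edges, and classes are contiguous arcs. Degree-$2$ vertices of the current cycle are precisely the classes containing no element of $N$, and contracting such a vertex merges it with its two cycle-neighbours, decreasing the cycle length by $2$ and preserving its odd parity.

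I next argue the process cannot get stuck. Suppose a degree-$2$ class $u$ has cycle-neighbours $p, q$ that are adjacent in the current graph. Since there are no chords, $p$ and $q$ must be cyclically consecutive in the current cycle, which combined with $u$ lying between them forces the current cycle to have length exactly $3$. Hence any stuck configuration occurs only at cycle length $3$.

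Now suppose for contradiction that at cycle length $3$ the three classes are $U, B', C'$ with $U$ non-$N$. Then $U$'s arc contains no $n_i$, so it lies inside the interior of a single sub-arc $(n_j, n_{j+1})$ of the $N$-partition of $C$, forcing $n_j\in B'$ and $n_{j+1}\in C'$ (up to relabelling). Since $\{a,b,c\}\subseteq N\subseteq B'\cup C'$, the pigeonhole principle places two of $a,b,c$ -- say $a$ and $b$ -- into the same class, WLOG $B'$. The super-arc of $C$ from $a$ to $b$ that does not pass through $c$ then lies entirely within the contiguous arc $B'$, so it decomposes as a concatenation of sub-arcs $(n_p, n_{p+1})$ with both endpoints in $B'$. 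For each such sub-arc, both endpoints end up in the same class $B'$, which requires the sub-arc to have been contracted to length $0$; this is only possible if its original length was even. Summing these even lengths, the super-arc has even length -- contradicting the hypothesis that all three super-arcs determined by $\{a,b,c\}$ are odd.

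Therefore the process terminates with every class belonging to $N$, so the contracted graph is a wheel $W_k$ with $k\ge 3$ odd -- an odd wheel, and a t-minor of $G$, as required. The main subtlety I need to verify is that no chord ever arises during the contractions, which then confines every potential obstruction to the cycle-length-$3$ case; once that is isolated, the odd-super-arc hypothesis rules it out through a clean parity contradiction.
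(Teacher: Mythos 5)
Your proof is correct and rests on the same core idea as the paper's: iteratively t-contract cycle vertices not adjacent to $v$ until a wheel appears. The two write-ups differ in how correctness is established. The paper runs a clean induction on $\abs{V(C)}$, maintaining locally the invariant that $a$, $b$, $c$ still cut the current cycle into three odd paths: it observes that any vertex $u$ of $C$ not adjacent to $v$ can be adjacent to at most one of $a$, $b$, $c$ (two such neighbours would create an even, length-two piece), so a single t-contraction at $u$ preserves the hypotheses with a cycle that is two shorter. You instead run the entire contraction process, track merged classes as contiguous arcs of $C$, and rule out the only possible bad terminal state (cycle length $3$ with one class missing $N$) by a post-hoc global parity argument. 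Both are sound; the paper's local invariant is considerably lighter and avoids the bookkeeping about arcs and sub-arcs.

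One step in your argument is asserted rather than proved: that ``both endpoints ending up in the same class requires the sub-arc to have even original length.'' This is true, but it is not immediate from ``each contraction shortens the cycle by two,'' since a contraction can merge a sub-arc's endpoint with vertices lying \emph{outside} that sub-arc. A clean justification: for a sub-arc $P$ between consecutive vertices of $N$, count the class-boundary edges lying on $P$. Initially this equals the length of $P$. Every contracted class is a non-$N$ class and hence lies strictly inside a single sub-arc, so its two boundary edges lie in the same sub-arc; thus each contraction decreases the count for exactly one sub-arc by exactly two and leaves the others unchanged. If at the end $P$ lies entirely inside one class, the count is zero, forcing the original length of $P$ to be even. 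With that filled in, your proof is complete.
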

	\begin{proof}
		If $\abs{V(C)}=3$, then $G=K_4$, and thus the lemma holds.
		So, we may assume that $\abs{V(C)}>3$ and we proceed inductively.
		If $v$ is adjacent to every vertex of $C$, then $G$ is an odd wheel, so we may assume that some vertex $u$ of $C$ is not adjacent to~$v$.
		By the assumption, $u$ is adjacent to at most one of $a$, $b$, and $c$.
		Then we apply a t-contraction at~$u$ to obtain a smaller graph $G'$ that still consists of an odd induced cycle $C'$ and a single additional vertex $v$ adjacent to at least three vertices $a$, $b$, $c$ that appear in order with on~$C'$ and partition $C'$ into three odd-length paths.
		By the inductive hypothesis, $G'$ (and hence $G$) contains an odd wheel as a t-minor, as desired.
	\end{proof}
	
	The following lemma extends \cref{lem:oddwheel}.

	\begin{lemma}\label{lem:oddwheel3}
		Let $G$ be a graph and 
		let $X$ be a subset of $V(G)$ such that 
		$G[X]$ contains an $r$-arithmetic rope $(q_i,Q_{i,1},Q_{i,2})_{i=1}^r$. 
		If $G- X$ is a connected bipartite graph with a bipartition $(A,B)$
		such that 
		no vertex in $A$ has neighbours in $X$ and 
		at least three vertices of $\{q_1,q_2,\ldots,q_r\}$ have neighbours in $B$, 
		then 
		$G$ contains an odd wheel as a t-minor.
	\end{lemma}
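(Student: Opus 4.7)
The plan is to reduce the statement to \cref{lem:oddwheel} by cutting the rope down to a single odd induced cycle and collapsing $G-X$ into one vertex. First, by the pigeonhole principle, pick three indices $a<b<c$ in $\{1,\ldots,r\}$ such that each of $q_a$, $q_b$, $q_c$ has a neighbour in $B$. I then choose parities $(h_1,\ldots,h_r)\in\{1,2\}^r$ so that each of the three cyclic sub-paths of the rope between consecutive members of $\{q_a,q_b,q_c\}$ has odd length: since the parity of such a sub-path equals the number of indices $i$ in its range with $h_i=1$ modulo~$2$, it suffices to set $h_i=1$ at exactly one index in each of the three ranges and $h_i=2$ elsewhere. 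By the defining property of an arithmetic rope, $C:=Q_{1,h_1}\cup\cdots\cup Q_{r,h_r}$ is then an induced odd cycle of $G[X]$ on which $q_a$, $q_b$, $q_c$ partition $C$ into three odd-length paths.

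Now set $G':=G[V(C)\cup V(G-X)]$. Since $A$ is anticomplete to $X\supseteq V(C)$, any $v\in A$ satisfies $N_{G'}(v)\subseteq B$, which is stable, so t-contracting at $v$ is legitimate. The plan is to successively t-contract all vertices of $A$ in an arbitrary order. I maintain the invariant that at each stage the image of $V(G-X)$ admits a bipartition whose one side consists of the as-yet-uncontracted vertices of $A$ and whose other side consists of free vertices of $B$ together with merged \emph{blobs} (images of contracted sets), and that the latter side remains stable. The only non-trivial part of the invariant is that two distinct blobs are never adjacent; this follows by induction, since an edge between two blobs would give an edge $uv$ of $G-X$ whose $A$-endpoint $u$ must have been t-contracted earlier (an $A$-vertex can enter a blob only by being the centre of a t-contraction), at which moment its neighbour $v$, whether free or inside another blob, would have been absorbed into $u$'s blob.

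Since $G-X$ is connected, every vertex of $B$ is eventually absorbed into a blob; and since the image remains connected while distinct blobs are mutually non-adjacent, by the end the entire $V(G-X)$ has collapsed to a single vertex $w$. The edges between $w$ and $V(C)$ in this t-minor are precisely the edges from $B$ to $V(C)$ in $G$, so $w$ is adjacent to each of $q_a$, $q_b$, $q_c$ (and possibly to other vertices of $C$), while $C$ remains an induced odd cycle disjoint from $w$. The hypothesis of \cref{lem:oddwheel} is therefore met, and applying it produces an odd wheel as a t-minor of $G'$, and hence of $G$. The main delicate point in the argument is the blob non-adjacency invariant sketched above; once this is in hand, everything else is straightforward bookkeeping.
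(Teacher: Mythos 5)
Your proof is correct and takes essentially the same approach as the paper: t-contract the $A$-vertices to collapse $G-X$ to a single vertex, then invoke \cref{lem:oddwheel}. The paper packages the repeated t-contractions as a clean induction on $\abs{A}$ (after each contraction at some $v\in A$ the lemma's hypotheses are re-verified), whereas you verify the stability invariant directly over the whole sequence of contractions and also spell out the parity choice of $(h_1,\ldots,h_r)$, which the paper states without detail; these are presentational differences, not a different argument.
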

	
	\begin{proof}
		We proceed by induction on $\abs{A}$.
		Observe that 
		$G[X]$ has an odd induced cycle~$C$ containing three vertices of $\{q_1,q_2,\ldots,q_k\}\cap N(B)$ that partition $C$ into three odd-length paths.
		If $A=\emptyset$, then by applying \cref{lem:oddwheel} to $C$ with the unique vertex in $B$, we deduce that $G$ contains an odd wheel as a t-minor. 
		If $A$ contains a vertex~$v$, then we perform a t-contraction at $v$ to obtain a t-minor $G'$. Observe that $G'$ is the graph obtained from $G$ by contracting all the edges incident with $v$ and so still satisfies the assumption of the lemma. By the induction hypothesis, $G'$ contains an odd wheel as a t-minor and so does~$G$.
	\end{proof}
	
	We require one more simple lemma on graphs with large odd girth containing connected bipartite induced subgraphs that contain a distinguished set $S$ of vertices.
	We remark that there are similar Ramsey theorems on finding induced trees containing an increasing number (increasing with $\abs{S}$) of vertices of a distinguished set $S$ \cite{abrishami2024induced,davies2022vertex}.
	Such Ramsey results can be used in part to strengthen \cref{thm:technical-main} to triangle-free graphs (at the cost of much worse bounds).
	
	\begin{lemma}\label{lem:bipartite}
		Let $G$ be a connected graph with odd girth at least $2g+1$ and let $S$ be a stable set of~$G$ with $\abs{S}\le 2g$.
		Then $G$ contains a connected bipartite induced subgraph $H$ with $S\subseteq V(H)$.
	\end{lemma}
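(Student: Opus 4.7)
The plan is to prove \cref{lem:bipartite} by induction on $|S|$. The cases $|S|\le 1$ are trivial (take any single vertex), and for $|S|=2$ any shortest path between the two vertices of $S$ is an induced path and hence a connected bipartite induced subgraph.

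For the inductive step with $|S|=k\ge 3$, I would pick some $s\in S$ and apply the induction hypothesis to $S'=S\setminus\{s\}$ (whose size is at most $2g-1\le 2g$) to obtain a connected bipartite induced subgraph $H'$ with $S'\subseteq V(H')$ and bipartition $(A,B)$. If $s\in V(H')$ we are done. Otherwise, let $P=q_0q_1\cdots q_m$ be a shortest path in $G$ from $q_0=s$ to $q_m=w\in V(H')$, and let $W=N(q_{m-1})\cap V(H')$; the shortness of $P$ forces $q_0,\ldots,q_{m-2}$ to have no neighbors in $V(H')$, so the only potential obstruction to bipartiteness of $G[V(H')\cup V(P)]$ comes from $W$. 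Extending the bipartition of $H'$ along $P$ gives a proper $2$-colouring of $G[V(H')\cup V(P)]$ whenever $W$ lies entirely in one of $A$ or $B$, yielding the desired $H$.

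The main obstacle is the case where $W$ meets both parts of $(A,B)$: for any $w_1\in W\cap A$ and $w_2\in W\cap B$, the cycle $q_{m-1}w_1\cdots w_2 q_{m-1}$ (taking a shortest $w_1w_2$-path in $H'$) is odd of length $2+d_{H'}(w_1,w_2)$, and by the odd-girth hypothesis this is at least $2g+1$, so $d_{H'}(w_1,w_2)\ge 2g-1$. Such a shortest $H'$-path has at least $2g-2$ internal vertices, alternating between the two parts of $(A,B)$, so consecutive internal vertices are adjacent; since $S'$ is stable and has size at most $2g-1$, at most $g-1$ internal vertices of the path can lie in $S'$, leaving (for $g\ge 2$) a non-$S$ internal vertex $u$. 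The plan is to modify $H'$ by deleting $u$ from $V(H')$, noting that the new edges $q_{m-1}w_1$ and $q_{m-1}w_2$ serve as a bypass in case $u$ was a cut vertex of $H'$, and to iterate this procedure, using the length of a shortest obstructing odd cycle in the current induced subgraph as a monovariant that strictly decreases at each step.

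The hardest part will be verifying that these successive deletions and path-additions always preserve connectivity, inducedness, and containment of $S$, and that the monovariant forces termination with a bipartite subgraph; this is exactly where the sharp numerical balance between the odd-girth bound $2g+1$ and the cardinality bound $|S|\le 2g$ is used, since this balance guarantees that the obstructing $H'$-path is long enough to contain a non-$S$ vertex available for deletion at every iteration.
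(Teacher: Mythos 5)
Your base cases and the setup of the inductive step (shortest path $P$ from $s$ to $H'$, only $q_{m-1}$ can have neighbours in $V(H')$, and the easy case $W\subseteq A$ or $W\subseteq B$) are all sound, and the count showing that the shortest $w_1w_2$-path in $H'$ has an internal vertex $u\notin S$ is also fine. However, the repair step has a genuine gap. Deleting $u$ does not remove the obstruction: in the modified graph, $q_{m-1}$ is \emph{still} adjacent to both $w_1\in A$ and $w_2\in B$, and $w_1,w_2$ are still joined by a $w_1w_2$-walk inside $H'-u$ together with the bypass $w_1q_{m-1}w_2$, so the subgraph you build remains non-bipartite. Moreover the proposed monovariant cannot work as stated: deleting a vertex can only lengthen or destroy odd cycles, never shorten them, and you are simultaneously adding $V(P)$, so there is no reason for ``the length of a shortest obstructing odd cycle'' to strictly decrease. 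Termination, preservation of connectivity, and preservation of $S\subseteq V(\cdot)$ are all left unverified, and I do not see how to close these gaps along the lines you sketch.

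The paper's proof avoids all of this with a single minimality argument rather than iteration or induction on $\lvert S\rvert$: take $H$ to be a connected induced subgraph containing $S$ with $V(H)$ minimal. Minimality forces every $v\in V(H)\setminus S$ to be a cut vertex of $H$, each of whose components meets $S$. For a cycle $C$ in $H$, the components of $H-v$ avoiding $C$ (one for each $v\in V(C)\setminus S$) are pairwise disjoint and each meets $S$, so $\lvert V(C)\rvert\le\lvert S\rvert\le 2g$; by the odd-girth hypothesis $C$ must be even, hence $H$ is bipartite. Your proposal shares the correct intuition that the odd-girth bound should make candidate obstruction cycles ``too long,'' but the minimality device is what turns that intuition into a clean proof, and your iterative substitute does not supply a working replacement for it.
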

	
	\begin{proof}
		Let $H$ be a connected induced subgraph of $G$ with $S\subseteq V(H)$, and with $V(H)$ minimal subject to this.
		By minimality, for every $v\in V(H) \setminus S$, $H-v$ is disconnected and each connected component contains a vertex of $S$.
		If $H$ is a tree, then the lemma follows, so we may assume otherwise.
		Consider a cycle $C$ of $H$.
		For each $v\in V(C) \setminus S$, there is a connected component $H_v$ of $H-v$ containing a vertex of $S$ and not containing a vertex of $C$. Then, the induced subgraphs $(H_v : v\in V(C) \setminus S)$ are vertex-disjoint, and each contains a vertex of~$S$.
		Therefore, $\abs{V(C)}\le \abs{S}\le 2g$.
		So, $C$ is not an odd cycle since $G$ has odd girth at least $2g+1$.
		Hence $H$ is bipartite, as desired.
	\end{proof}
	
	We are now ready to prove \Cref{thm:technical-main} (which then implies \cref{thm:main} and thus \cref{thm:hperfect}).
	
	\begin{proof}[Proof of \Cref{thm:technical-main}.]
		Let $G$ be a graph with odd girth at least $11$ and chromatic number at least $199049$.
		
		Let $v$ be a vertex of $G$ in a connected component with maximum chromatic number.
		Let $L_0=\{v\}$ and for each positive integer $i$, let $L_i=N^i(v)$.
		Then there exists a positive integer $t$ such that $\chi(L_t) \ge \lceil 199049/2\rceil = 99525$.
		Since $\chi(N^3[v]) \le 2 < 99525$, we have that $t\ge 4$.
		
		Since $\chi(L_t) \ge 99525$, by \cref{thm:5ropefinding} there is a $5$-arithmetic rope $(q_i,Q_{i,1},Q_{i,2})_{i=1}^5$ of~$G$ contained in~$L_t$.
		For each $1\le i \le 5$, let $x_i$ be a vertex of $L_{t-1}$ adjacent to $q_i$, and let $y_i$ be a vertex of $L_{t-2}$ adjacent to $x_i$.
		Since the vertices $q_1, \ldots , q_5$ are pairwise at distance at least~$5$ in $G$, the vertices $x_1, \ldots , x_5 , y_1, \ldots , y_5$ are all distinct
		and each of $x_1, x_2, x_3, x_4, x_5$ has degree $1$ in  $G[\{x_1, \ldots , x_5 , y_1, \ldots , y_5\}]$.
		Now, let $H^*$ be the induced subgraph of $G$ on vertex set $\{x_1, \ldots, x_5\} \cup \{y_1, \ldots , y_5\} \cup \bigcup_{i=0}^{t-3} L_i$. Then, $H^*$ is connected and each of the vertices $x_1 , \ldots , x_5$ has degree one in $H^*$.
		
		Since $H^*$ has odd girth at least~$7$, by \cref{lem:bipartite}, $H^*$ contains a connected bipartite induced subgraph~$H'$ with $\{x_1, \ldots , x_5\} \subseteq V(H')$.
		Let $A$, $B$ be the bipartition of~$H'$.
		Without loss of generality,
		we may assume that $B$ contains at least three vertices $a^*,b^*,c^*$ of $\{x_1, \ldots , x_5\}$ that are pairwise at an even distance in $H'$.
		Since the vertices $x_1, \ldots , x_5$ all have degree 1 in $H'$, it follows that $H=H'- (\{x_1, \ldots , x_5\} \setminus \{a^*, b^*, c^*\})$ is connected.
		No vertex of $V(H)\setminus \{a^*, b^*, c^*\}$ has a neighbour in the arithmetic rope $(q_i,Q_{i,1},Q_{i,2})_{i=1}^5$ since $V(H)\setminus \{a^*, b^*, c^*\} \subseteq \bigcup_{i=0}^{t-2} L_i$ and $(q_i,Q_{i,1},Q_{i,2})_{i=1}^5$ is contained in $L_t$.
		Hence $G$ contains an odd wheel as a t-minor by \cref{lem:oddwheel3}.
	\end{proof}

	\section{Concluding remarks and open problems}
	\label{sec:conclusion}

	As remarked in \cref{sec:intro}, we optimised the proof of \cref{thm:main} for simplicity, rather than for the best bound.
	With more delicate and technical arguments, it is possible to significantly improve.
	This would still leave a large gap between our bound and the lower bound of 4 for the chromatic number of t-perfect graphs.
	A good milestone for narrowing the gap would be to improve our upper bound to at most 1000.
	The only forbidden t-minors we used were odd wheels, so considering further forbidden t-minors such as even M\"{o}bius ladders \cite{shepherd-lehman} or working directly with the polytope definition of t-perfection may allow for better bounds. The complete list of forbidden t-minors for t-perfection remains a major open problem. 
	
	While not all t-perfect graphs are $3$-colourable~\cite{schrijver2003combinatorial,benchetrit-PhD,benchetrit20164critical}, the graphs in \cref{fig:laurent-seymour} are the only known minimal counterexamples. Both counterexamples are relatively \say{uncomplicated} graphs, both contain at most $10$ vertices and both are complements of line graphs of well-known graphs. 
	In \cite{benchetrit20164critical}, Benchetrit showed that the only 4-critical $P_6$-free t-perfect graphs are the graphs in \cref{fig:laurent-seymour}, 
	but there may well be other minimal counterexamples that include an induced path on six vertices. 
	It is natural to ask if these are the only counterexamples, or if there are infinitely many counterexamples.

	\begin{problem}
		Are there infinitely many $4$-critical t-perfect graphs?
	\end{problem}

	We remark that if there are only finitely many $4$-critical t-perfect graphs, then this would yield a polynomial time algorithm for determining whether a t-perfect graph is $3$-colourable. It is NP-Complete to decide whether a graph is 3-colourable in general.

	Lov{\'a}sz \cite{lovasz1972normal} proved that the complement of every perfect graph is a perfect graph.
	This is not the case for h-perfect graphs, however the class \emph{$\overline{h}$-perfect} of complements of h-perfect graphs is still a natural class of graphs that is equivalently defined by a polytope.
	Observe that a graph~$G$ is \emph{$\overline{h}$-perfect} if its clique set polytope is equal to 
	\begin{align*}
		\overline{\hstab{G}}=\{ x\in \mathbb{R}^{V(G)}\colon 
		& x(S)\le 1 \text{ for every stable set $S$,}\\
		& x(C)\le \lfloor \abs{V(C)}/2\rfloor 
		\text{ for every odd anti-hole $C$,}\\
		& x(v)\ge 0 \text{ for every vertex $v$.} \}. 
	\end{align*}

	As with the class of h-perfect graphs, we prove that $\overline{h}$-perfect graphs are also $\chi$-bounded.
	We prove that $\overline{h}$-perfect graphs are quadratically $\chi$-bounded.
	We remark that a (doubly exponential) $\chi$-bound also follows from \cref{thm:hperfect} and a theorem of Scott and Seymour~\cite{complementation-conjecture}.
	
	\begin{lemma}\label{lem:antihole}
		For every integer $k\ge 3$, 
		$\overline{C_{2k+1}}$ is not h-perfect.
	\end{lemma}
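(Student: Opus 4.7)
The plan is to exhibit a single explicit point $x^{*}\in\overline{\hstab{\overline{C_{2k+1}}}}$ lying outside $\ssp{\overline{C_{2k+1}}}$. Specifically, I would try $x^{*}:=\frac{1}{k}\mathbf{1}\in\mathbb{R}^{V(\overline{C_{2k+1}})}$.

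The first step is to compute the two relevant graph parameters. A stable set of $\overline{C_{2k+1}}$ is a clique of $C_{2k+1}$, and cliques in an odd cycle have size at most $2$, so $\alpha(\overline{C_{2k+1}})=2$. Dually, a clique of $\overline{C_{2k+1}}$ is a stable set of $C_{2k+1}$, which has size at most $k$, so $\omega(\overline{C_{2k+1}})=k$. This is the easy part.

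The second step is to verify that $x^{*}\in\hstab{\overline{C_{2k+1}}}$. Non-negativity is immediate. For a clique $K$ of $\overline{C_{2k+1}}$ we have $|K|\le k$ and hence $\sum_{v\in K}x^{*}_{v}\le k\cdot\frac{1}{k}=1$. For an odd cycle $C$ in $\overline{C_{2k+1}}$ of length $\ell\ge 3$, the odd-cycle inequality $\sum_{v\in V(C)}x^{*}_{v}=\ell/k\le(\ell-1)/2$ rearranges to $k\ge 2+\tfrac{2}{\ell-1}$, which is satisfied for every $k\ge 3$ and every odd $\ell\ge 3$.

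The third step is to show that $x^{*}\notin\ssp{\overline{C_{2k+1}}}$. Any point $x$ in $\ssp{G}$ is a convex combination of incidence vectors of stable sets, so $\mathbf{1}\cdot x\le\alpha(G)$. However, $\mathbf{1}\cdot x^{*}=\frac{2k+1}{k}>2=\alpha(\overline{C_{2k+1}})$, so $x^{*}\notin\ssp{\overline{C_{2k+1}}}$. Combining with the previous step, $\ssp{\overline{C_{2k+1}}}\subsetneq\hstab{\overline{C_{2k+1}}}$, so $\overline{C_{2k+1}}$ is not h-perfect.

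There is no real obstacle here; the only thing to be careful about is the boundary case $k=3$, where the odd-cycle inequality for a triangle is tight ($\ell/k=1=(\ell-1)/2$), but this only confirms that $x^{*}$ lies on the boundary of $\hstab{\overline{C_{2k+1}}}$ rather than outside it.
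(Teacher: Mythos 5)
Your proof is correct and takes essentially the same approach as the paper's: both use the test point $\frac{1}{k}\mathbf{1}$, observe it lies in $\hstab{\overline{C_{2k+1}}}$, and derive a contradiction from $\alpha(\overline{C_{2k+1}})=2$ by summing all coordinates. You spell out the verification that $\frac{1}{k}\mathbf{1}\in\hstab{\overline{C_{2k+1}}}$ in a bit more detail, but the argument is otherwise identical.
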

	\begin{proof}
		First, observe that $\frac1k \mathbf{1}\in \hstab{\overline{C_{2k+1}}}$.
		So if it is h-perfect, then $\frac1k \mathbf{1}\in \ssp{\overline{C_{2k+1}}}$ and therefore there exist stable sets $S_1$, $S_2$, $\ldots$, $S_m$ with positive weights $\lambda_1$, $\lambda_2$, $\ldots$, $\lambda_m$ such that $\sum_{i=1}^m \lambda_i=1$ and $\sum_{i=1}^m \lambda_i \mathbf{1}_{S_i} = \frac1k \mathbf{1}$. Since each stable set has size at most $2$, we deduce that $\frac{2k+1}{k} = \sum_{i=1}^m \lambda_i \abs{S_i} \le 2 \sum_{i=1}^m\lambda_i = 2$, a contradiction.
	\end{proof}
	
	\begin{theorem}\label{thm:hcomplement}
		Every $\overline{h}$-perfect graph~$G$ is $\omega(G)+1 \choose 2$-colourable.
	\end{theorem}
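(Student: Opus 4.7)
The plan is to proceed by induction on $\omega(G)$. The base case $\omega(G)\le 1$ is immediate since then $\chi(G)\le 1\le \binom{\omega(G)+1}{2}$. For the inductive step, write $\omega:=\omega(G)\ge 2$ and aim to find $\omega$ stable sets $S_1,\ldots,S_\omega$ of $G$ whose union meets every maximum clique of $G$. Given such stable sets, $\omega(G-\bigcup_i S_i)\le \omega-1$; since $\overline{h}$-perfectness is hereditary (as h-perfectness is, and complements of induced subgraphs are induced subgraphs of complements), the induction hypothesis applied to $G-\bigcup_i S_i$ gives $\chi(G-\bigcup_i S_i)\le \binom{\omega}{2}$, whence $\chi(G)\le \omega+\binom{\omega}{2}=\binom{\omega+1}{2}$.

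To construct the $\omega$ stable sets, I would leverage the polytope identity that the clique polytope of $G$ equals $\overline{\hstab{G}}$, applying LP duality. The primal $\max \mathbf{1}^T x$ over $\overline{\hstab{G}}$ has optimal value $\omega(G)$, attained at clique indicator vectors, and its dual is a covering LP in which $V(G)$ is fractionally covered by stable sets (weight $1$ each) and odd antiholes $C$ (weight $\lfloor |C|/2\rfloor$ each) with total weight $\omega$. Using \cref{lem:antihole}, which implies $\overline{h}$-perfect graphs contain no induced $C_{2k+1}$ for $k\ge 3$, I would extract an integral dual cover using at most $\omega$ stable sets together with a controlled number of odd antiholes. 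Each odd antihole of length $n$ contributes weight $\lfloor n/2\rfloor$ to the cover but needs $\lceil n/2\rceil$ colours, so each antihole in the cover costs one extra colour, and the colour budget remaining from the inductive step can absorb these.

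The main obstacle is the integrality step: while the fractional dual value is $\omega$, the integer dual cover can exceed this, and one must produce a combinatorial construction that hits every maximum clique via exactly $\omega$ stable sets (possibly after regrouping colour classes from antihole-portions). Even the minimal example $C_5$ illustrates the subtlety: here $\omega=2$ and $\chi=3$, and no proper $2$-colouring exists, yet two stable sets such as $\{v_1,v_3\}$ and $\{v_4\}$ do meet every edge, so the required structure does exist but cannot be obtained from a proper $\omega$-colouring of the max-clique subgraph alone. Verifying that this construction generalises, and controlling the interplay between stable-set constraints and odd-antihole constraints in $\overline{\hstab{G}}$, is where the crux of the proof lies. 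The bound $\binom{\omega+1}{2}$, rather than a potentially tighter linear bound like $\lceil 3\omega/2\rceil$, provides exactly the slack needed to absorb the loss from converting antihole contributions into stable-set hitting sets at each level of the induction.
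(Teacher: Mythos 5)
Your proposal has a genuine gap that you yourself flag: the claim that one can always find $\omega$ stable sets whose union meets every maximum clique of an $\overline{h}$-perfect graph. LP duality on the clique polytope gives only a \emph{fractional} cover of the vertices by stable sets and odd antiholes with total weight $\omega$, and turning this into an \emph{integral} collection of exactly $\omega$ stable sets that hits every maximum clique is not a routine rounding step. The $C_5$ example works out by hand, but you give no mechanism that generalises, and the ``regrouping colour classes from antihole-portions'' idea is left entirely unspecified. An antihole $\overline{C_{2k+1}}$ with $k\ge 3$ cannot even occur by \cref{lem:antihole}, so the only antihole the dual could use is $C_5$ itself; but you would still need to show that the remaining dual weight on stable sets, after absorbing the $C_5$ contributions, can be discretised to at most $\omega$ stable sets that collectively intersect every $\omega$-clique. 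Nothing in your argument controls this.

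The paper's proof avoids this entirely and is much more direct: fix any vertex $v$ and consider $G - N[v]$. By \cref{lem:antihole}, $G$ (hence $G-N[v]$) has no odd hole of length at least $7$; and $G-N[v]$ has no odd antihole at all, since $v$ together with an antihole in $G-N[v]$ would produce an odd wheel in $\overline{G}$ (as $v$ is anticomplete to $G-N[v]$, it is complete to the corresponding hole in $\overline{G}$), contradicting that $\overline{G}$ is h-perfect by \cref{lem:oddwheelforbid}. Therefore $G-N[v]$ is Berge and, by the Strong Perfect Graph Theorem, perfect, so $\chi(G-N(v)) = \omega(G-N[v]) \le \omega$. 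Since $\omega(G[N(v)]) < \omega$, induction gives $\chi(G[N(v)]) \le \binom{\omega}{2}$, and summing yields $\binom{\omega+1}{2}$. If you want to salvage your route, the honest way to produce the $\omega$ stable sets hitting all max cliques is essentially this same observation: the $\omega$ colour classes of a proper colouring of $G - N(v)$ together hit every clique of size $\omega$, because any $\omega$-clique contained in $N(v)$ would extend by $v$ to a clique of size $\omega+1$. So your inductive framework is fine, but the LP duality machinery is both unnecessary and unproven; the real ingredient you are missing is the Strong Perfect Graph Theorem applied to $G-N[v]$.
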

	
	\begin{proof}
		We proceed by induction on $\abs{V(G)}$.
		Let $\omega=\omega(G)$. We may assume that $\omega>1$.
		Let $v$ be a vertex of $G$.
		By \cref{lem:antihole}, $G$ contains no odd hole of length at least~$7$.
		Observe that $G-N[v]$ contains no odd anti-hole since odd wheels are not h-perfect by \cref{lem:oddwheelforbid}.
		So, by the strong perfect graph theorem \cite{CRST2006}, 
		$G-N[v]$ is perfect and therefore 
		$\chi(G-N(v)) = \chi(G- N[v])= \omega(G- N[v]) \le \omega$.
		Clearly $\omega(G[N(v)]) < \omega$, so by the induction hypothesis,
		$\chi(G)\le \chi(G- N(v)) + \chi(G[N(v)])\le \omega+\binom{\omega}{2}=\binom{\omega+1}{2} $.
	\end{proof}

	\cref{thm:hcomplement} is tight for triangle-free graphs since $C_5$ is $\overline{h}$-perfect. More generally, pairwise complete copies of $C_5$ are $\overline{h}$-perfect and this shows that for each positive integer~$\omega$, there is an $\overline{h}$-perfect graph with clique number~$\omega$ and chromatic number $\lfloor \frac{3}{2}\omega \rfloor$.
	So, a $\chi$-bounding function of the form $\omega(G) + c$ as in \cref{thm:hperfect} for h-perfect graphs is not possible for $\overline{h}$-perfect graphs.
	We conjecture that the quadratic bound in \cref{thm:hcomplement} can be improved to a linear one.
	
	\begin{conjecture}
		The class of $\overline{h}$-perfect graphs is linearly $\chi$-bounded.
	\end{conjecture}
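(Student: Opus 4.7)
Plan: The plan is to translate the conjecture to the complement and refine the argument of \cref{thm:hcomplement} to avoid its multiplicative loss. Setting $H = \overline{G}$ so that $H$ is h-perfect, the goal becomes to show $\chi(G) \le c \cdot \omega(G)$ for an absolute constant $c$. The extremal family of pairwise-complete disjoint copies of $C_5$ mentioned above gives $\chi/\omega = 3/2$, suggesting that $c = 3/2$ is the sharp conjectural bound.

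First, I would gather the structural information about $G$ implied by h-perfection of $H$. By \cref{lem:antihole} applied to $H$ and taking complements, $G$ contains no induced $C_{2k+1}$ for $k \ge 3$, so the only induced odd hole $G$ can have is $C_5$. By essentially the same computation as in \cref{lem:oddwheelforbid} (the only extra check is the clique inequality for triangles in $W_k$, which is satisfied since all cliques of $W_k$ with $k \ge 5$ have size at most $3$), $W_k$ is not h-perfect for any odd $k \ge 5$. Hence $G$ contains no induced copy of $\overline{W_k}$ for odd $k \ge 5$; in particular, for every induced $C_5 \subseteq G$ and every vertex $v$ outside it, $v$ has a neighbour in the $C_5$, since otherwise $\overline{G}[\{v\} \cup V(C_5)]$ would be an induced $W_5$.

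Second, I would proceed by induction on $\omega(G)$ with the statement $\chi(G) \le c \cdot \omega(G)$. Recall that in \cref{thm:hcomplement}, $\chi(G) \le \chi(G - N(v)) + \chi(G[N(v)]) \le \omega(G) + \chi(G[N(v)])$ and a naive induction loses a multiplicative factor of $\omega$ per level, leading to the quadratic bound. To salvage a linear bound, I would share colours between an $\omega$-colouring of the perfect subgraph $G - N[v]$ and a recursive colouring of $G[N(v)]$, using the $C_5$-domination property established above to certify that colours not appearing on $N(v)$ can be recycled, so that only additively many new colours are needed per induction step.

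The main obstacle is precisely this colour-sharing step: the forbidden induced subgraphs identified above do not obviously force the kind of global colour reuse that a linear bound demands, and it seems likely that one would need a partial structural description of $\overline{h}$-perfect graphs identifying the role of $C_5$'s, perhaps analogous to the Strong Perfect Graph Theorem. An alternative route would go through fractional chromatic numbers: applying \cref{lem:reduceomega} to $\overline{G}$ gives $\chi^*(\overline{G}) = \alpha(G)$, which translates into a linear bound on the fractional clique cover number of $G$, but using this to bound $\chi(G)$ itself requires a rounding step whose linearity is not automatic, since the fractional-to-integer gap may be multiplicative in general. Either way, the primary difficulty is converting local structural constraints into a global linear colouring.
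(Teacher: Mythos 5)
This statement is stated in the paper as an open \emph{conjecture}; the authors do not prove it. They prove only the quadratic bound $\chi(G)\le \binom{\omega(G)+1}{2}$ (\cref{thm:hcomplement}) and exhibit a lower-bound family (pairwise-complete copies of $C_5$) showing $\chi/\omega$ can be as large as $3/2$, which motivates the conjecture. So there is no proof in the paper to compare against, and your write-up is appropriately an exploration rather than a proof.

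Your preparatory structural deductions are correct and in fact coincide with what the paper uses to obtain the quadratic bound: since $\overline{G}$ is h-perfect, \cref{lem:antihole} forbids odd holes of length at least $7$ in $G$, and the odd-wheel argument of \cref{lem:oddwheelforbid} (which, as you note, carries over to h-perfection once one observes that $W_k$ for odd $k\ge 5$ has clique number $3$) implies that every induced $C_5$ in $G$ is dominating and that $G-N[v]$ has no odd antihole; together with SPGT this gives that $G-N[v]$ is perfect, which is exactly the paper's route to $\binom{\omega+1}{2}$. The genuine gap is the one you already flag: the colour-sharing step needed to turn the recursion $\chi(G)\le\omega(G)+\chi(G[N(v)])$ into a linear bound is not established, and the $C_5$-domination property alone does not obviously supply it. Your observation via \cref{lem:reduceomega} that $\chi^*(\overline G)=\alpha(G)$ gives a fractional clique-cover bound for $G$, but this controls a dual quantity and does not bound $\chi(G)$ without a rounding argument that is, as you say, not automatic. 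So you have correctly reconstructed the state of the art and correctly located the open difficulty; what remains missing is precisely the content of the conjecture.
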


	\paragraph{Acknowledgements.} 
	We thank Dabeen Lee for contributing to discussions in the early stages of this project, Andr\'as Seb\H{o} for helpful feedback, and Sebastian Wiederrecht for sharing this problem with us at the \href{https://www.math.sinica.edu.tw/www/file_upload/conference/202402Rim/index.html}{2024 Pacific Rim Graph Theory Group Workshop}. Thanks also go to Bruce Reed for organising and Academia Sinica for hosting this workshop.
	Most of this research was conducted during visits to Princeton University and the IBS. 
	We are grateful to both institutions for their hospitality.

	{
		\fontsize{11pt}{12pt}
		\selectfont

		\bibliographystyle{Illingworth.bst}
		\bibliography{main_refs.bib}
	}
	
\end{document}